\newcommand{\stkout}[1]{\ifmmode\text{\sout{\ensuremath{#1}}}\else\sout{#1}\fi}
\newcommand{\st}{\text{s.t.}}
\newcommand{\unitVec}{\mathbf{e}}
\newcommand{\xttt}{\vartheta}
\newcommand{\RR}{\mathbb{R}}
\newcommand{\NN}{\mathbb{N}}
\newcommand{\vecOnes}{\mathbf{1}}
\newcommand{\vecZeros}{\mathbf{0}}
\newcommand{\DDD}{\mathcal{D}}
\newcommand{\SSS}{\mathcal{S}}
\newcommand{\AAA}{\mathcal{A}}
\newcommand{\HHH}{\mathcal{H}}
\newcommand{\LLL}{\mathcal{L}}
\newcommand{\CCC}{\mathcal{C}}
\newcommand{\EEE}{\mathcal{E}}
\newcommand{\ttt}{\theta}
\newcommand{\tr}{\text{\rm tr}}
\newcommand{\trc}{{\text{\rm tr}^*}}
\newcommand{\apx}{\text{\rm app}}
\newcommand{\mc}{\text{\rm mc}}
\newcommand{\ef}{\text{\rm ef}}
\newcommand{\eg}{\text{\rm eg}}
\newcommand{\thresh}{\text{\rm thr}}
\newcommand{\fcd}{\text{\rm fcd}}
\newcommand{\inc}{\text{\rm inc}}
\newcommand{\dec}{\text{\rm dec}}
\newcommand{\bhm}{\text{\rm bhm}}
\newcommand{\enl}{\text{\rm enl}}
\newcommand{\alg}{\text{\rm Alg}}
\newcommand{\nulll}{\text{\rm null}}
\newcommand{\proj}[2]{\textrm{proj}_{#1}\left(#2\right)} 
\newcommand{\Footnote}[1]{}
\DeclareMathOperator*{\argmax}{arg\,max}
\DeclareMathOperator{\app}{app}
 \newcommand{\algparboxtwo}[1]{\parbox[t]{\dimexpr\linewidth-\algorithmicindent-\algorithmicindent}{#1\strut}}
  \newcommand{\algparboxthree}[1]{\parbox[t]{\dimexpr\linewidth-\algorithmicindent-\algorithmicindent-\algorithmicindent}{#1\strut}}
\theoremstyle{plain}% Theorem-like structures provided by amsthm.sty
\newtheorem{theorem}{Theorem}[section]
\newtheorem{lemma}[theorem]{Lemma}
\newtheorem{assumption}[theorem]{Assumption}
\theoremstyle{definition}
\newtheorem{definition}[theorem]{Definition}
\theoremstyle{remark}
\newtheorem{remark}{Remark}
\author{
\name{Frank~E.~Curtis \textsuperscript{a}, Shima Dezfulian \textsuperscript{b*}, \thanks{*Corresponding author. Email: dezfulian@u.northwestern.edu.} 
% Supported in part by NSF DMS-2012410.}
\and Andreas W\"achter. \textsuperscript{b}}
\affil{\textsuperscript{a} Department of ISE, Lehigh University, Bethlehem, PA;\\  \textsuperscript{b} Department of IEMS, Northwestern University, Evanston, IL}
}
\begin{document}

\title{% Exploiting Prior Function Evaluations in Derivative-Free Optimization
{Derivative-Free Bound-Constrained Optimization for Solving Structured Problems with Surrogate Models}}

\maketitle

% REQUIRED
\begin{abstract}
    We propose and analyze a model-based derivative-free (DFO) algorithm for solving bound-constrained optimization problems where the objective function is the composition of a smooth function and a vector of black-box functions.  We assume that the black-box functions are smooth and the evaluation of them is the computational bottleneck of the algorithm. The distinguishing feature of our algorithm is the use of approximate function values at interpolation points which can be obtained by an application-specific surrogate model that is cheap to evaluate.  As an example, we consider the situation in which a sequence of related optimization problems is solved and present a regression-based approximation scheme that uses function values that were evaluated when solving prior problem instances. In addition, we propose and analyze a new algorithm for obtaining interpolation points that handles unrelaxable bound constraints. Our numerical results show that our algorithm outperforms a state-of-the-art DFO algorithm for solving a least-squares problem from a chemical engineering application when a history of black-box function evaluations is available.
\end{abstract}

% REQUIRED
\begin{keywords}
  derivative-free optimization, trust-region methods, least squares
\end{keywords}

% REQUIRED

\begin{amscode}
  49M15, 65K05, 65K10, 90C30, 90C56
\end{amscode}

%******
% Body
%******
%******
% Body
%******
%*********
% Section
%*********
\section{Introduction}\label{sec.intro}

We propose an algorithmic framework for solving bound-constrained optimization problems that have two distinct characteristics. First, we consider problems that involve objective functions that are smooth (i.e., continuously differentiable), yet derivatives are intractable to compute.  We refer to such functions as \emph{black-box} functions, as is common in the literature.  Typically, for a black-box function, evaluating the objective function itself is very expensive, and to speed up the optimization our framework permits the use of surrogate models that provide a fast but approximate evaluation of the objective function at a subset of the evaluation points.  Second, we consider problems for which the objective function is a composition of a smooth function (whose analytical form is known) and (expensive-to-evaluate) black-box functions, and our framework is designed to take advantage of the knowledge of this structure.

An example class of problems for which our framework is applicable are multi-output simulation optimization problems, where one aims to minimize a cost calculated from a vector of outputs of a simulation; see, e.g., \cite{bates1988nonlinear, choi2020gradient, dolan2004benchmarking}.  The black-box functions that arise in these contexts could, for example, be computed by numerical simulations, say that solve discretized partial differential equations (PDEs). Here, a natural surrogate model might be based on solving the PDEs to lower accuracy \cite{liu2016multi, zhang2021multi}.

Another setting in which our framework is applicable, which we explore in detail in this paper, is the situation in which a sequence of optimization problems is solved where the instances are distinguished by exogenous hyperparameters of the black-box functions that take on different but similar values.  In this context, we utilize regression-based surrogate models constructed from function outputs that have been collected during the solution of previous problem instances.  Sequences of related optimization problems occur in various applications throughout science, engineering, and economics.  Here, we mention  a setting that has motivated this work, namely, nonlinear least-square problems that commonly appear in data-fitting applications \cite{bates1988nonlinear, dennis1996numerical}. For such problems, one aims to find the hyperparameters for the black-box functions that map inputs to outputs as accurately as possible based on a set of observed data.

Our proposed algorithm builds upon state-of-the-art model-based DFO methods for solving problems that possess a composite structure of the objective.  In particular, inspired by \cite{cartis2019derivative, wild2017chapter, zhang2010derivative}, our algorithm computes trial steps by minimizing local models of the objective, where the composite structure of the objective is exploited by combining local models that are constructed for each black-box function separately.  It has been shown, e.g., in the context of nonlinear least squares, that algorithms that exploit such structure often perform better than those that do not; see, e.g., \cite{cartis2019derivative, wild2017chapter, zhang2010derivative}.

%************
% Subsection
%************
\subsection{Contributions}

We propose and analyze a model-based DFO algorithm for solving bound-constrained optimization problems that exploits approximate function values. This allows the algorithm to avoid new evaluations of black-box functions at interpolation points when the function values at these points can be approximated sufficiently accurately by using  an application-specific surrogate model or by exploiting previously evaluated black-box functions at nearby points. Typically, previously evaluated black-box functions are made available through prior runs of the  DFO algorithm for solving related problems or prior iterations of the run of the algorithm. We propose a regression-based function approximation scheme that is built by using nearby points when previously evaluated black-box functions are available.
Moreover, for handling bound constraints, we introduce a new subroutine  that iteratively adds feasible points to the interpolation set and ensures that a required geometry condition is satisfied for finding model parameters in a numerically stable manner.   Our numerical experiments show that our algorithm outperforms a state-of-the-art model-based DFO method when solving a sequence of related least-squares problems, especially when the budget for function evaluations is small, which is typically the case in real-world applications.  {More specifically, we compare our proposed algorithm, referred to as $\alg_{\HHH}$, and an algorithm that has all of the same features of our algorithm except that does not utilize prior function evaluations to {approximate} function values at interpolation points, referred to as $\alg_{\emptyset}$. Hence, similar to existing model-based DFO algorithms, $\alg_{\emptyset}$ utilizes function evaluations that are obtained in prior iterations by possibly choosing them as candidate interpolation points in subsequent iterations, but it does not use such information for approximation.}

%************
% Subsection
%************
\subsection{Notation}\label{sec:notation}

The sets of real numbers, $n$-dimensional real vectors, and $n$-by-$m$-dimensional real matrices are denoted by $\RR$, $\RR^n$, and $\RR^{n \times m}$, respectively.  The sets of nonnegative and positive real numbers are denoted by $\RR_{\geq0}$ and $\RR_{>0}$, respectively.  The vector of all zeros is denoted as $\vecZeros$, the identity matrix is denoted as $I$, the vector of all ones is denoted as $\vecOnes$, and the $i$th unit vector is denoted as $\unitVec_i$, where in each case the size of the object is determined by the context.  The set of nonnegative integers is denoted as $\NN := \{0,1,\dots\}$ and we define $[n] := \{1,\dots,n\}$ for any $n \in \NN \setminus \{0\}$.

%For a set $\SSS \subseteq \RR^n$ and real number $c \in \RR$, we define $c \bullet \SSS := \{cs : s \in \SSS\}$.  
For sets $\SSS_1 \subseteq \RR^n$ and $\SSS_2 \subseteq \RR^n$, we define Minkowski addition and subtraction in the usual manner, e.g., $\SSS_1 + \SSS_2 = \{s_1 + s_2 : s_1 \in \SSS_1, s_2 \in \SSS_2\}$.  That said, in the particular case when one of the sets is a singleton, say $\{c\}$ with $c \in \RR^n$, then we simply write $c + \SSS = \{ c + s : s \in \SSS\}$.  The cardinality of a set $\SSS$ is denoted by $|\SSS|$.

Given any real number $q \geq 1$, the $\ell_q$-norm of a vector $v \in \RR^n$ is written as~$\|v\|_q$.  The closed $\ell_q$-norm ball with center $x \in \RR^n$ and radius $\Delta \in \RR_{\geq0}$ is denoted as $B_{q}(x, \Delta) := \{\bar x: \|\bar x - x \|_q \leq \Delta \}$.  The dual norm of $\|\cdot\|_q$ is denoted and defined by $\|z\|_{q^*} := \max \{z^Tx : x \in B_{q}(0,1)\}$.  The Frobenius-norm of a matrix $M$ is denoted by $\|M\|_{\rm F}$. For a matrix $M \in \RR^{m \times n}$ and real numbers $p \geq 1$ and $q \geq 1$, the  $L_{p, q}$-norm of $M$ is written as $\| M\|_{p, q} = \big(\sum_{j=1}^n (\sum_{i=1}^m |M_{ij}|^p)^{\frac{q}{p}} \big)^{\frac{1}{q}}$ and the $(p,q)$-operator norm of $M$ is written as  $\|M\|_{(p, q)} = \sup_{\|x\|_p \leq 1} \| M x\|_q$, although, for shorthand, $\| \cdot\|_p = \| \cdot \|_{(p, p)}$.

{For the sake of generality, a pair of norms used throughout the paper are those defining the \emph{trust region radius} and the \emph{approximation radius} (i.e., precision parameter) in our algorithm, which we respectively denote as $\|\cdot\|_{\tr}$ and $\|\cdot\|_{\app}$ for some real numbers $\tr \geq 1$ and $\app \geq 1$}.  In our analysis, we make use of norm equivalence constants for finite-dimensional real vector spaces; in particular, due to the equivalence of all such norms, for any positive integers $n_x$ and $p$, there exist constants $(\kappa_{\tr_0},\kappa_{\tr_1},\kappa_{\tr_0^*},\kappa_{\tr_1^*},\kappa_{\tr_2^*},\kappa_{\tr_3^*},\kappa_{\app_0^*}) \in \RR_{\geq 0}^7$ such that
\begin{subequations}\label{eq.equivconstants}
	\begin{align}
	\|v\|_2 &\leq \kappa_{\tr_0} \|v\|_{\tr} && \text{for all $v \in \RR^{n_x}$}, \label{eq.equiv7} \\
	\|v\|_{\tr} &\leq \kappa_{\tr_1} \|v\|_2 && \text{for all $v \in \RR^{n_x}$}, \label{eq.equiv8} \\
	\|v\|_{\trc} &\leq \kappa_{\tr_0^*} \|V^{-1}\|_2 \|Vv\|_{\infty} && \text{for all $(v,V) \in \RR^{n_x} \times \RR^{n_x \times n_x}$}, \label{eq.equiv1} \\
	\|Vv\|_{\trc} &\leq \kappa_{\tr_1^*} \|V\|_{\trc,1} \|v\|_2 && \text{for all $(v,V) \in \RR^p \times \RR^{n_x \times p}$}, \label{eq.equiv2} \\ 
	\|VUV^T\|_{(\tr,\trc)} &\leq \kappa_{\tr_2^*} \|V\|_{\trc,1}^2 \|U\|_2 && \text{for all $(V,U) \in \RR^{n_x \times p} \times \RR^{p \times p}$}, \label{eq.equiv3}\\
	\| v \|_{\tr} \| v \|_{\trc} &\leq \kappa_{\tr_3^*} \| v\|_2^2&& \text{for all $v \in \RR^{n_x}$}, \label{eq.equiv4} \\
	\text{and}\ \ 
	\|Vv\|_{\app^*} &\leq \kappa_{\app_0^*} \| v \|_{\app} \|V\|_2 &&\text{for all $(v,V) \in \RR^{n_x} \times \RR^{n_x \times n_x}$}. \label{eq.equiv5}  
	\end{align}
\end{subequations}

For differentiable $g: \RR^n \to \RR$ and $y \in \RR^n$, we use $\partial_i g(y)$ and $\partial_i \partial_j g(y)$ to denote $\frac{\partial g(y)}{\partial y_i}$ and $\frac{\partial^2 g(y)}{\partial y_i \partial y_j}$ for any $(i,j) \in [n] \times [n]$, respectively.  The gradient function of $g$ is $\nabla g: \RR^n \to \RR^n$, where $[\nabla g(y)]_i = \partial_i g(y)$ for any $i \in [n]$ and $y \in \RR^n$.  If $g$ is twice differentiable, then the Hessian function of $g$ is $\nabla^2 g: \RR^n \to \RR^{n \times n}$, where $[\nabla^2 g(y)]_{i, j} = \partial_i \partial_j g(y)$ for any $(i,j) \in [n] \times [n]$ and $y \in \RR^n$.  For differentiable vector-valued $G: \RR^n \to \RR^m$, the transpose of the Jacobian function of $G$ is denoted by $\nabla G: \RR^n \to \RR^{n \times m}$, where $[\nabla G(y)]_{i,j} = \partial_i G_j(y)$ for any $(i,j) \in [n] \times [m]$ and $y \in \RR^n$.

Given any $M \in \RR^{m \times n}$, its column space is $\text{span}(M)$, its null space is $\nulll(M)$, and given any $v \in \RR^m$, the projection of $v$ onto $\text{span}(M)$ is $\proj{M}{v}$.

One of the main quantities in the paper is a function $F$ that takes two input arguments.  For the purposes of designing our optimization algorithm, it is convenient to write the domain of the function as $\RR^{n_x + n_\ttt}$, although for ease of exposition we write the function and its gradient as taking inputs in $\RR^{n_x} \times \RR^{n_\ttt}$; in particular, these are written in the form $F(x;\ttt)$ and $\nabla F(x; \ttt)$. This should not lead to confusion due to the natural one-to-one mapping between elements of $\RR^{n_x} \times \RR^{n_\ttt}$ and $\RR^{n_x + n_\ttt}$.

%************
% Subsection
%************
\subsection{Problem Formulation}\label{sec:prob_formulation}

Formally, we consider the minimization problem
\begin{equation}\label{eq:gen_problem_no_seq}
\min_{x \in \Omega} f(x) \ \ \text{with} \ \ f(x) = h(F(x)) 
\end{equation}
where the function $h: \RR^{p} \to \RR$ is \emph{glass-box} in the sense that its analytical form is known and $\Omega = [x_L, x_U]$ for some $x_L \in (\RR \cup \{-\infty\})^{n_x}$ and $x_U \in (\RR \cup \{\infty\})^{n_x}$ with $x_{L,j} < x_{U,j}$ for all $j \in [n_x]$.
The vector-valued function $F: \RR^{n_x} \to \RR^p$ consists of black-box \emph{element functions}  $F_i$ for each $i \in [p]$, that is
\begin{equation}\label{eq:vec_F_no_seq}
F(x) = \begin{bmatrix} F_1(x)& \cdots & F_p(x) \end{bmatrix}^T.
\end{equation}
Our framework does not evaluate $F$ outside of $\Omega$, i.e., the bound constraints defining $\Omega$ are
unrelaxable constraints.
We conjecture that our proposed techniques can be extended to the setting when $\Omega$ is any closed convex set by following the ideas in \cite{hough2021model}. 

\subsection{Multi-Output Simulation-based Optimization with Surrogate Models}
\label{sec.multioutput_surrogate_formulation}

As previously mentioned, an example setting in which our framework is applicable is multi-output simulation-based optimization with surrogate models.  We assume that exact evaluations of $F$ are expensive and hence inexact evaluations of $F$ that are obtained by computationally cheaper surrogate models may be beneficial. For all $i\in[p]$, we use $\tilde F_i(x, \delta)$ to denote an approximation of $F_i(x)$ that, for a given precision parameter  $\delta \in \RR_{>0 }$, satisfies the accuracy requirement
\begin{equation}\label{eq:gen_err_ub_0_no_seq}
|F_i(x) - \tilde F_i(x, \delta)| \leq \kappa_{\app} \delta
\end{equation}
for some fixed $\kappa_{\apx} \in  \RR_{\geq 0}$.
In the algorithm, $\delta$ will be proportional to $\Delta^2$, where $\Delta$ is a trust region radius. Typically,   $\delta$ becomes smaller the closer the iterates are to  {a stationary point}.  
Exact evaluations of $F$ at iterates and trial points are still required.

We envision that the approximation $\tilde F_i$ is obtained from a surrogate model that is a cheap approximation of the expensive function $F_i$.  Typically, the surrogate model will be tailored to the specific application.  For instance, if $F_i$ is computed from the numerical solution of PDEs, then the surrogate model might simply be defined by solving the PDEs to a lower accuracy that depends on the precision parameter $\delta$ as long as it is possible to bound the error proportional to $\delta$ so that \eqref{eq:gen_err_ub_0_no_seq} holds\Footnote{AW: Do we have a reference for that?}.  Importantly, the actual value of $\kappa_{\app}$ need not be known by our algorithmic framework.

\subsection{Sequence of  Optimization Problems}
\label{sec.sequence_formulation}

Another setting in which our algorithm is applicable is when a sequence of optimization problems, indexed by $t \in \NN$, of the form
\begin{equation}\label{eq:gen_problem}
\min_{x \in \Omega_t} f_t(x)\ \ \text{with}\ \ f_t(x) \coloneqq h_t(F(x, \theta_t))
\end{equation}
is to be solved. The function $F: \RR^{n_x} \times \Theta \to \RR^p$, is defined similar to \eqref{eq:vec_F_no_seq}, i.e.,
\begin{equation}
\label{eq:vec_F}
F(x, \ttt_t) = \begin{bmatrix}
F_1(x, \ttt_t) & \dots & F_p(x, \ttt_t)
\end{bmatrix}^T,
\end{equation}
and  $\ttt_t \in \Theta$ for some $\Theta \subseteq \RR^{n_\ttt}$  are some exogenous hyperparameters of $F$, which could be, for example, settings inside a simulator that remain fixed during the solution of the $t$-th problem but might vary across instances.
The quantities $h_t$ and  $\Omega_t$ are defined similar to $h$ and $\Omega$ in problem \eqref{eq:gen_problem_no_seq} but may vary with $t$.

Since our presumption is that evaluations of $F$ are expensive, it may be beneficial to store and make use of function values that were computed during previous runs of the optimization algorithm.  That is, for all $t>0$, we presume that a history of prior black-box function values is available \emph{a priori}.  We denote the set of prior information for element function~$i$ when solving problem $t$ (obtained when solving problems~0 through $t-1$ and during prior iterations when solving problem~$t$) by~$\HHH_{i,t}$, which is a finite set and might be defined differently depending on the setting.

For instance, in a setting where \eqref{eq:gen_problem} computes a cost function from a simulation that has $p$ different outputs,
we might store all prior evaluations of the outputs, i.e.,
\begin{align}
\label{eq:history_multi_output}
{\HHH_{i, t} = \left\{(x, \ttt_j, F_i(x, \ttt_j)): F_i(x, \ttt_j) \text{ has been evaluated, with} \ j \in \{0, \dots, t-1\}\right\}}.
\end{align}
If we then need an approximation $\tilde F_i(\bar x,\theta_t, \delta)$ of $F_i(\bar x,\theta_t)$ for a specific $x=\bar x$ that satisfies a condition similar to \eqref{eq:gen_err_ub_0_no_seq}, namely,
\begin{equation}\label{eq:gen_err_ub_0_seq}
|F_i(x, \ttt_t) - \tilde F_i(x, \ttt_t, \delta)| \leq \kappa_{\app} \delta,
\end{equation}
we could build a regression model based on all points in $\{(x,\theta):(x,\theta,F_i(x, \ttt))\in\HHH_{i,t}\}$ within some approximation radius of $(\bar x, \theta_t)$ that depends on $\delta$; we will make this more concrete in Section~\ref{sec.multioutput}.
If an insufficient number of such points are available, then $F_i(\bar x,\theta_t)$ can be computed exactly as its own approximation.

Optimizing a sequence of optimization problems commonly appears in data-fitting applications. Suppose we have a simulator $\phi(x, w) \to \RR$ with internal parameters $x \in \Omega$ and we want to choose $x$ so that the output $\phi(x, w_i)$ for a feature vector $w_i$ best matches an observation $y_i$. More precisely, given data $\left\{ (w_{i, t}, y_{i, t}), \dots, (w_{p, t}, y_{p, t})\right\}$, we are interested in solving a  least-squares problem of the form
\begin{align}\label{eq:opt_least_square}
\min_{x \in \Omega_t}\ \frac{1}{2} \sum_{i=1}^p \left(\phi(x, w_{i, t}) - y_{i, t} \right)^2,\ \ \text{where $\Omega_t$ represents element-wise bounds}.
\end{align}
One can formulate this problem as an instance of problem \eqref{eq:gen_problem} by defining
\begin{align*}
\theta_t \coloneqq &\ \begin{bmatrix} w_{1, t}^T & w_{2, t}^T & \dots & w_{p, t}^T \end{bmatrix}^T, & F_i(x, \ttt_t) \coloneqq &\ \phi(x, w_{i, t})\ \text{for all $i \in [p]$}, \\
y_t \coloneqq &\ \begin{bmatrix} y_{1, t} & \dots & y_{p, t} \end{bmatrix}^T, & \text{and}\ \ h_t(v) \coloneqq &\ \tfrac{1}{2}\|v - y_t\|^2\ \text{for all $v \in \RR^p$}.
\end{align*}
Notice that since $F_i(\cdot,\theta_t) \equiv \phi(\cdot,w_{i,t})$ for all $(i,t) \in [p] \times \NN$, this means that one may approximate the value of \emph{any} component of $F$ by exploiting \emph{all} prior evaluations of~$\phi$, even those obtained for different components.  In other words, the history is the same for all components of $F$ in the sense that $\HHH_{i,t} = \HHH_{j,t}$ for all $(i,j) \in [p] \times [p]$, where 
\begin{align}\label{eq:history_least_square}
{\HHH_{i, t} = \left\{(x,w_{j,k},\phi(x, w_{j, k})) : \phi(x, w_{j, k}) \text{ has been evaluated, with} \ j \in [p], k \in \{0, \dots, t-1 \}\right\}}.
\end{align}

%************
% Subsection
%************
\subsection{Literature Review}
\label{sec.literature_review}
Generally speaking, DFO algorithms can be categorized as direct-search, finite-difference-based, or model-based methods.  Direct-search methods, such as pattern~search methods \cite{audet2004pattern, gray2004appspack} and the Nelder-Mead method \cite{nelder1965simplex}, are often outperformed by finite-difference-based and model-based methods \cite{more2009benchmarking} when one is minimizing a smooth objective.  That said, one of the strengths of direct-search methods is that they are more readily applicable when an objective function is nonsmooth \cite{audet2006mesh,custodio2008using} or even discontinuous \cite{vicente2012analysis}.  {A pattern-search approach for solving structured and partially separable, nonsmooth, and potentially discontinuous functions is proposed in \cite{porcelli2022exploiting}. It is assumed that the objective function can be expressed as a sum of \emph{element functions} with each element function depending on a relatively small subvector of the decision variables. The proposed algorithm exploits the structure of the problems to improve the scalability of pattern-search methods for solving such problems.} Finite-difference approaches approximate derivatives using finite difference schemes, which are then embedded within a gradient-based optimization approach, such as a steepest descent or quasi-Newton method; see, e.g., \cite{shi2021numerical}.  Empirical evidence has shown that finite-difference methods can be competitive with model-based methods, at least when one presumes no noise in the function values.  
Our proposed method falls into the model-based method category; hence, we provide a more comprehensive overview of them in the remainder of this section.

Model-based trust-region methods for unconstrained optimization have received a lot of attention in the literature; see, e.g., \cite{conn1997recent,conn2008geometry0, conn2009introduction,marazzi2002wedge,powell2002uobyqa,powell2004least, powell2006newuoa, wild2008mnh}.  Such methods operate by constructing in each iteration a local multivariate model of the objective function.  Typically, linear or quadratic interpolation models are used.  The resulting model is minimized within a trust region to compute a trial point, then common trust-region strategies for updating the iterate and trust region radius are employed. Examples of theoretical results on the convergence of model-based methods to first- or second-order stationary points can be found in \cite{conn1997convergence, conn2009introduction}.  In order to guarantee convergence, the interpolation points used for building the models need to satisfy a geometry condition referred to as \emph{well-poisedness}. See   \cite{conn1997recent,conn2008geometry0, conn2009introduction,marazzi2002wedge, wild2008mnh} for further discussion. Linear or quadratic models can also be obtained by regression instead of interpolation; see \cite{conn2008geometry}.  Other types of models have been used as well.  For example, interpolating radial basis function models have been used in ORBIT \cite{wild2008orbit, wild2011global}, a model-based trust-region algorithm with theoretical convergence guarantees.  One can also distinguish model-based methods in terms of the type of accuracy requirement that are imposed on the function values; many require function values that satisfy an accuracy requirement surely  \cite{bellavia2018levenberg,conn1997recent,conn2008geometry0,conn2009introduction,wild2008mnh} while others require accurate function values at least with some positive probability \cite{bandeira2014convergence, chen2018stochastic,gratton2018complexity}.  Our algorithm falls into the former category since we impose function value accuracy requirements that must hold surely.

Model-based algorithmic ideas have been extended to solve constrained optimization problems as well.  In \cite{conn1998derivative}, a high-level discussion on how to handle various types of constraints is provided.  BOBYQA \cite{powell2009bobyqa} is designed to solve bound-constrained problems that are unrelaxable.  COBYLA \cite{powell1994direct} can solve inequality constrained problems by constructing linear interpolation models for the objective and constraint functions using points that lie on a simplex.  In \cite{conejo2013global} and \cite{hough2021model}, DFO methods for solving problems with closed convex constraints are studied, where it is assumed that projections onto the feasible region are tractable.  The algorithms in \cite{conejo2013global} and \cite{hough2021model} have global convergence guarantees.  CONORBIT \cite{regis2017conorbit} is an extension of ORBIT \cite{wild2008orbit} that handles relaxable inequality constraints and unrelaxable bound constraints.

One of the motivating settings for our proposed algorithm is least-squares optimization.  Hence, we mention that for the special case of such problems, tailored algorithms that take into account the structure of the problem have been proposed previously.  For example, DFO-LS \cite{cartis2019improving} and DFO-GN \cite{cartis2019derivative} suggest constructing separate linear models for the residuals, then exploiting a Gauss-Newton approach in order to construct a quadratic model for the least-squares objective function.  DFO-LS \cite{cartis2019improving} can handle bound constraints as well.  {In cases where the function and gradient can be evaluated with dynamic accuracy, least-squares problems have been studied by \cite{bellavia2018levenberg}. The authors propose a Levenberg–Marquardt method to solve such problems, where it is assumed that the values of the function and gradient can be made as accurate as needed.}
Constructing separate quadratic interpolation models for the residuals  and using Taylor approximation to construct a quadratic model for the least-squares objective has been considered in POUNDERS~\cite{wild2017chapter} and DFLS~\cite{zhang2010derivative}.

Most relevant for this paper is the fact that all of the aforementioned methods do not consider how to exploit  approximate function values at interpolation points that can be obtained by a computationally cheap surrogate model \cite{forrester2007multi, liu2016multi, pousa22multitask, zhang2021multi} or by exploiting nearby prior function values. Although, in the context of bilevel DFO optimization, the idea of re-using prior function values has been investigated in \cite{conn2012bilevel}, no specific mechanism for approximating function values is provided.

%************
% Subsection
%************
\subsection{Organization}
Our generic DFO algorithmic framework is presented in Section~\ref{sec.gen_framework}. In Section~\ref{sec.intrplset}, an algorithm for obtaining well-poised interpolation sets that takes into account bounds is provided. In Section~\ref{sec:fcn_approx}, we provide an approximation scheme that uses prior function evaluations such that one obtains an implementable instance of the framework.  Numerical experiments are presented in Section~\ref{sec.numerical}.
% and concluding remarks in Section~\ref{sec.conclusion}.

%*********
% Section
%*********

\section{A Derivative-Free Algorithmic Framework}
\label{sec.gen_framework}

A generic model-based algorithmic framework for solving problem \eqref{eq:gen_problem_no_seq} is presented in this section.  
It can employ approximate function values in the run of the algorithm as long as certain accuracy requirements are imposed.

In Section~\ref{sec.fl_models}, we describe a procedure for the construction of \emph{fully linear} element function models and fully linear models of the overall function.   In Section \ref{sec.gen_alg_desc}, a description of the  algorithm is provided with pseudocode presented in Algorithm~\ref{alg.dfo_approx}; see page~\pageref{alg.dfo_approx}. We sketch a convergence proof for Algorithm~\ref{alg.dfo_approx} in Section~\ref{sec.gen_conv_analysis} and Appendix~\ref{app:conv_proof} of the supplementary material.

Algorithm~\ref{alg.dfo_approx} is a trust-region algorithm that maintains and updates a trust region radius $\Delta_k$ that is bounded by $\Delta_{\max} \in \RR_{>0}$.
Denoting the $f(x_0)$-sublevel set for the objective function~$f$ as $\LLL_0 = \{x \in \RR^{n_x}: f(x) \leq f(x_0)\}$, it follows by construction that the iterate sequence $\{x_k\}$ is contained in the enlarged set
\begin{equation*}
\LLL_{\enl} := \LLL_0 + B_{\tr}(0, \Delta_{\max}).
%   + B_{\app}(0, \delta_{\max}). 
\end{equation*}
Throughout this section, we assume the following about the functions defining~$f$.
{Recall that  $\|\cdot\|_{\tr}$ is used to define the trust region radius in our algorithm,  where $\tr$ is a real number greater than one.}

\begin{assumption}\label{assumption:F_h}
	There exists an open convex set ${\cal X}$ containing $\LLL_{\enl} \cap \Omega$ over which, for each $i \in [p]$, one has that $F_i(\cdot)$ is continuously differentiable, $F_i(\cdot)$ is Lipschitz continuous with constant $L_{F_i,x} \in \RR_{>0}$ such that
	\begin{equation*}
	|F_i(x) - F_i(\bar x)| \leq L_{F_i, x} \| x - \bar x \|_{\tr} \ \  \text{for all} \ \ (x, \bar x) \in {\cal X} \times {\cal X},
	\end{equation*}
	and $\nabla F_i(\cdot)$ is Lipschitz continuous with constant $L_{\nabla F_i,x} \in \RR_{>0}$ such that
	\begin{equation*}
	\|\nabla F_i(x) - \nabla F_i(\bar{x})\|_{\trc} \leq L_{\nabla F_i,x} \|x - \bar{x}\|_{\tr}\ \ \text{for all}\ \ (x,\bar{x}) \in {\cal X} \times {\cal X}.
	\end{equation*}
	In addition, there exists an open convex set ${\cal Y}$ containing $\{F(x): x \in  \LLL_{\enl} \cap \Omega\}$ over which one has that $h$ is twice-continuously differentiable, $h$ is Lipschitz continuous with constant $L_h \in \RR_{>0}$ such that
	\begin{equation}\label{eq.hfuncLip}
	|h(y) - h(\bar{y})| \leq L_h \|y - \bar{y}\|_2\ \ \text{for all}\ \ (y,\bar{y}) \in {\cal Y} \times {\cal Y},
	\end{equation}
	$\nabla h$ is Lipschitz continuous with constant $L_{\nabla h} \in \RR_{>0}$ such that
	\begin{equation}\label{eq.hLip}
	\|\nabla h(y) - \nabla h(\bar{y})\|_2 \leq L_{\nabla h} \|y - \bar{y}\|_2\ \ \text{for all}\ \ (y,\bar{y}) \in {\cal Y} \times {\cal Y},
	\end{equation}
	there exists $\kappa_{\nabla h} \in \RR_{>0}$ such that $\|\nabla h(y)\|_2 \leq \kappa_{\nabla h}$ for all $y \in {\cal Y}$, and for each $i \in [p]$ there exists $\kappa_{\partial_i h} \in \RR_{>0}$ such that $|\partial_i h(y)| \leq \kappa_{\partial_i h}$ for all $y \in {\cal Y}$.
\end{assumption} 
In addition, defining
\begin{align}\label{eq:F_x_lipschitz_consts}
L_{F_x} = \sum_{i=1}^p L_{F_{i, x}},\ \ 
\bar L_{F_x} = \left(\sum_{i=1}^p L_{F_{i, x}}^2 \right)^{\frac{1}{2}},\ \ \text{and}\ \ 
L_{\nabla F_x} = \sum_{i=1}^p L_{\nabla F_{i, x}},
\end{align}
let us also mention that, under Assumption~\ref{assumption:F_h}, it follows from \eqref{eq:F_x_lipschitz_consts}  that $f$ is continuously differentiable and with \eqref{eq.equivconstants} one can show that
\begin{align}
\|\nabla f(x) - \nabla f(\bar{x})\|_{\trc}
\leq&\ \kappa_{\tr_1^*} \left( \kappa_{\tr_3^*} L_{F_{x}} L_{\nabla h}  \bar L_{F_{i, x}} + \kappa_{\nabla h}  L_{\nabla F_{x}}  \right) \| x - \bar x \|_{\tr} \label{eq.needthis}
\end{align}
for all $(x,\bar{x}) \in {\cal X} \times {\cal X}$, where ${\cal X}$ is defined as in the assumption, which is to say that the gradient function $\nabla f$ is Lipschitz continuous over ${\cal X}$ with constant as shown.

%************
% Subsection
%************
\subsection{Fully Linear Models}\label{sec.fl_models}

Model-based derivative-free optimization methods often use local linear or quadratic interpolation models \cite{conn2009introduction, wild2008orbit}.  More generally, in iteration $k \in \NN$, let $\DDD_k$ be a set of interpolation directions in a neighborhood of the current iterate~$x_k$, e.g., each member of the set $\DDD_k$ has the form $x - x_k$ for some $x \in B_{\tr}(x_k, \Delta_k) \cap \Omega$.  Considering an element function $F_i$ for some $i \in [p]$, one commonly lets $q_{k,i}$ denote a local interpolation model of $F_i$ around $x_k$ satisfying $q_{k, i}(x_k + d) = F_i(x_k + d)$ for all $d \in \DDD_k$.  However, our algorithm merely requires
\begin{equation}
\label{eq:gen_approx_intrpl_cond}
q_{k, i}(x_k + d) = \tilde F_i(x_k + d, \delta_k)\ \ \text{for all}\ \ d \in \DDD_k,
\end{equation}
where the approximate function value $\tilde F_i(x_k + d, \delta_k)$ is required to satisfy a condition similar to  \eqref{eq:gen_err_ub_0_no_seq} for all $d \in \DDD_k$; see Lemma~\ref{lemma:comp_fl} in this subsection.

We make the following assumption about $q_{k,i}$ for all $k \in \NN$ and $i \in [p]$. Let us define $q_k(x) := \begin{bmatrix} q_{k, 1}(x) & \dots & q_{k, p}(x) \end{bmatrix}^T$.  For the functions $h$ and $\nabla h$, this assumption should be seen to augment Assumption~\ref{assumption:F_h}, e.g., with respect to the definitions of the Lipschitz constants.

\begin{assumption}\label{assumption:q}
	There exists an open convex set $\cal X$ containing $\LLL_{\enl} \cap \Omega$ over which, for all $k \in \NN$ and $i \in [p]$, one has that $q_{k,i}$ is twice-continuously differentiable, $\nabla q_{k, i}$ is Lipschitz continuous with constant $L_{\nabla q_i} \in \RR_{>0}$ such that
	\begin{equation*}
	\|\nabla q_{k, i}(x) - \nabla q_{k, i} (\bar{x})\|_{\trc} \leq L_{\nabla q_i} \|x - \bar{x}\|_{\tr}\ \ \text{for all}\ \ (x, \bar x) \in {\cal X} \times {\cal X},
	\end{equation*}
	and there exists $\kappa_{\nabla q_{i}} \in \RR_{>0}$ such that $\|\nabla q_{k, i}(x)\|_2 \leq \kappa_{\nabla q_i}$ for all $x \in {\cal X}$.  In addition, there exists an open convex set ${\cal Y}$ containing $\bigcup_{k=1}^\infty \{q_k(x): x \in \LLL_{\enl} \cap \Omega\}$ over which $h$ is twice-continuously differentiable, $h$ is Lipschitz continuous with constant $L_h \in \RR_{>0}$ such that \eqref{eq.hfuncLip} holds, $\nabla h$ is Lipschitz continuous with constant $L_{\nabla h} \in \RR_{>0}$ such that \eqref{eq.hLip} holds, there exists $\kappa_{\nabla h} \in \RR_{>0}$ such that $\|\nabla h(y)\|_2 \leq \kappa_{\nabla h}$ for all $y \in {\cal Y}$, and for each $i \in [p]$ there exists $\kappa_{\partial_i h} \in \RR_{>0}$ such that $|\partial_i h(y)| \leq \kappa_{\partial_i h}$ for all $y \in {\cal Y}$.
\end{assumption}
A practical method for constructing a model $q_{k, i}$ that satisfies Assumption~\ref{assumption:q} is provided in \cite{stefan2008derivative, wild2008mnh}.  For our purposes later on, let us define
\begin{align}\label{eq:q_consts}
L_{\nabla q} = \sum_{i=1}^p L_{\nabla q_i}\ \ \text{and}\ \ \kappa_{\nabla q} = \sum_{i=1}^p \kappa_{\nabla q_i}.
\end{align}

In model-based derivative-free optimization, since the derivative of a black-box function cannot be evaluated analytically and Taylor models are replaced with interpolation or regression models, one needs to ensure that the model is accurate enough in a neighborhood of the current iterate, commonly defined by the trust region.  This is achieved by providing bounds on the error in the model and its gradient.  The concept of a \emph{fully linear} model defines such a situation common in modern DFO methods.

\begin{definition}
	A sequence of differentiable models $\{m_k\}$, with $m_k : \RR^n \to \RR$ for all $k \in \NN$, is fully linear with respect to a differentiable function $f : \RR^n \to \RR$ over $B_{\tr}(x_k, \Delta_k) \cap \Omega$ with constants $(\kappa_{\ef},\kappa_{\eg}) \in \RR_{\geq0} \times \RR_{\geq0}$ if, for all $x \in B_{\tr}(x_k, \Delta_k) \cap \Omega$,
	% , and all $k \in \NN$
	\begin{align}\label{eq:main_thm_fcn_bnd}
	|m_k(x) - f(x) | \leq \kappa_{\ef} \Delta_k^2\ \ \text{and}\ \ \| \nabla m_{k}(x) - \nabla f(x) \|_{\trc} \leq \kappa_{\eg} \Delta_k \ \ \text{for all}\ \ k \in \NN.
	\end{align} 
\end{definition}

The following lemma provides sufficient conditions for the models $\{q_{k,i}\}_{k\in\NN}$ to be fully linear with respect to $F_i(\cdot)$ over a trust region for all $i \in [p]$.

\begin{lemma}\label{lemma:comp_fl}
	Suppose that Assumptions~\ref{assumption:F_h} and \ref{assumption:q} hold and consider arbitrary $k \in \NN$ and ${\cal D}_k := \{d_1, \dots, d_{n_x}\} \subset B_{\tr}(0, \Delta_k) \cap (\Omega - x_k)$ such that $(a)$ for all $i \in [p]$, the model $q_{k,i}$ satisfies~\eqref{eq:gen_approx_intrpl_cond} on ${\cal D}_k \cup \{0\}$, and $(b)$ the matrix $\begin{bmatrix}  d_1 & \dots & d_{n_x} \end{bmatrix}$ is invertible and, for some $\Lambda \in \RR_{>0}$ and $\bar \kappa_{\apx} \in \RR_{>0}$, one has that
	\begin{align}
	\left\| \begin{bmatrix}  d_1 & \dots & d_{n_x} \end{bmatrix}^{-1} \right \|_2 &\leq \frac{\Lambda}{ \Delta_k}\ \ \text{and} \label{cond:fl-1} \\
	|F_i(x_k + d) - \tilde F_i(x_k+d, \delta_k) | &\leq \bar \kappa_{\apx} \Delta_k^2 \ \ \text{for all}\ \ d \in {\cal D}_k \cup \{0\}. \label{cond:fl-2}
	\end{align}
	Then, for any $x \in B_{\tr}(x_k; \Delta_k) \cap \Omega$ and any $i \in [p]$, the model $q_{k, i}$ satisfies
	\begin{align}
	|q_{k, i}(x) - F_i(x) | &\leq \hat \kappa_{\ef} \Delta_k^2 \label{eq:fl_m} \\ \text{and}\ \ 
	\| \nabla q_{k,i}(x) - \nabla F_i(x) \|_{\trc} &\leq \hat \kappa_{\eg} \Delta_k \label{eq:fl_g},
	\end{align}
	for some $\hat \kappa_{\ef} \in \RR_{>0}$ and $\hat \kappa_{\eg} \in \RR_{>0}$ independent of $k$.
\end{lemma}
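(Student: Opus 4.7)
The plan is to follow the textbook fully-linear interpolation argument, adapted to accommodate the approximation error from \eqref{cond:fl-2}, in three stages: first control the gradient error exactly at the center $x_k$; then extend the gradient bound to all $x\in B_{\tr}(x_k,\Delta_k)\cap\Omega$; and finally derive the function-value bound \eqref{eq:fl_m}.

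For the first stage I would apply the standard quadratic Taylor remainder, available from the Lipschitz continuity of $\nabla q_{k,i}$ in Assumption~\ref{assumption:q} and of $\nabla F_i$ in Assumption~\ref{assumption:F_h}, to both $q_{k,i}$ and $F_i$ around $x_k$ evaluated at each $d_j\in\DDD_k$. Subtracting the two expansions and substituting the interpolation identity \eqref{eq:gen_approx_intrpl_cond} together with \eqref{cond:fl-2} applied at both $d_j$ and $d=0$ yields, for every $j\in[n_x]$,
\begin{equation*}
|(\nabla q_{k,i}(x_k)-\nabla F_i(x_k))^T d_j| \;\le\; C\,\Delta_k^2,
\end{equation*}
where $C:=\tfrac12(L_{\nabla q_i}+L_{\nabla F_i,x})+2\bar\kappa_{\apx}$. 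Stacking these $n_x$ inequalities and writing $D_k:=[d_1\;\cdots\;d_{n_x}]$ produces a componentwise (i.e.\ $\ell_\infty$-) bound on $D_k^T(\nabla q_{k,i}(x_k)-\nabla F_i(x_k))$. Invoking \eqref{eq.equiv1} with $V=D_k^T$, together with the well-poisedness hypothesis \eqref{cond:fl-1} that $\|D_k^{-1}\|_2\le\Lambda/\Delta_k$, then gives the central-point estimate $\|\nabla q_{k,i}(x_k)-\nabla F_i(x_k)\|_{\trc}\le\kappa_{\tr_0^*}\Lambda C\,\Delta_k$.

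For the second stage I would pass from $x_k$ to an arbitrary $x\in B_{\tr}(x_k,\Delta_k)\cap\Omega$ via the decomposition $\nabla q_{k,i}(x)-\nabla F_i(x)=[\nabla q_{k,i}(x)-\nabla q_{k,i}(x_k)]+[\nabla q_{k,i}(x_k)-\nabla F_i(x_k)]+[\nabla F_i(x_k)-\nabla F_i(x)]$, bounding the outer differences in $\|\cdot\|_{\trc}$ by $L_{\nabla q_i}\Delta_k$ and $L_{\nabla F_i,x}\Delta_k$ respectively, and combining with the stage-one estimate to obtain \eqref{eq:fl_g} with $\hat\kappa_{\eg}:=L_{\nabla q_i}+L_{\nabla F_i,x}+\kappa_{\tr_0^*}\Lambda C$. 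For the function-value bound \eqref{eq:fl_m} I would use the decomposition
\begin{equation*}
q_{k,i}(x)-F_i(x) = [q_{k,i}(x_k)-F_i(x_k)] + (\nabla q_{k,i}(x_k)-\nabla F_i(x_k))^T(x-x_k) + R_q(x) - R_F(x),
\end{equation*}
bounding the constant term by \eqref{cond:fl-2} at $d=0$, the linear term by the dual-norm H\"older inequality (pairing $\|\cdot\|_{\trc}$ with $\|\cdot\|_{\tr}$) combined with the stage-one bound, and the Taylor remainders $R_q,R_F$ by $\tfrac12 L_{\nabla q_i}\Delta_k^2$ and $\tfrac12 L_{\nabla F_i,x}\Delta_k^2$.

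The principal obstacle is organizational rather than analytical: careful bookkeeping is needed to verify that both $\hat\kappa_{\ef}$ and $\hat\kappa_{\eg}$ depend only on the Lipschitz constants of Assumptions~\ref{assumption:F_h}--\ref{assumption:q}, on $\Lambda$, on $\bar\kappa_{\apx}$, and on the fixed norm-equivalence constant $\kappa_{\tr_0^*}$, hence are independent of $k$ as required (taking maxima over the finite index set $[p]$ if one wishes a single pair of constants valid for all $i$). The only non-routine analytical step is the invocation of \eqref{eq.equiv1} at the end of stage one, which converts the componentwise bound on $D_k^T(\nabla q_{k,i}(x_k)-\nabla F_i(x_k))$ into a bound in the dual trust-region norm; everything else is the classical fully-linear derivation, with the approximation slack from \eqref{cond:fl-2} playing the role that exact interpolation would otherwise render vacuous.
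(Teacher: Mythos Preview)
Your proposal is correct and follows the classical fully-linear argument. The paper's proof uses essentially the same ingredients but organizes them slightly differently: rather than first bounding the gradient error at the center $x_k$ and then extending to general $x$ via the triangle inequality and Lipschitz continuity (your stages one and two), the paper invokes integral Taylor identities (from Lemma~4.1.2 in \cite{dennis1996numerical}) to obtain directly, for \emph{arbitrary} $s$ with $\|s\|_{\tr}\le\Delta_k$, a bound on $\langle e_{\nabla q}(s),d_j\rangle$ for each $j$, and then applies \eqref{eq.equiv1} and \eqref{cond:fl-1} exactly as you do. This yields $\hat\kappa_{\eg}=\kappa_{\tr_0^*}\Lambda\bigl(\tfrac{5}{2}(L_{\nabla F_{i,x}}+L_{\nabla q_i})+2\bar\kappa_{\apx}\bigr)$ in one step, at the cost of tracking four integral remainders instead of two quadratic ones. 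For the function bound the paper also proceeds via the same integral identity specialized to $d_0=0$, where you use the equivalent Taylor decomposition around $x_k$. Both routes are standard; yours is marginally more elementary, the paper's avoids the separate Lipschitz-extension step and produces a somewhat different (but equally $k$-independent) constant.
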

% \begin{proof}
One can prove Lemma~\ref{lemma:comp_fl} is a similar manner as in the proof of Theorem~4.1 in~\cite{stefan2008derivative}. The main difference is the use of approximate function values, instead of exact function values,  at interpolation points. See Appendix~\ref{app:lemmas_proof} of the supplementary material for
a complete proof.
% \end{proof}  

With local models for all of the element functions, a local quadratic model for $f$ around $x_k$ can be obtained using a Taylor approximation.  For example, let us consider the model $m_k$ for each $k \in \NN$ as the second-order Taylor-series approximation
%with $q_k(x) := \begin{bmatrix} q_{k, 1}(x) & \dots & q_{k, p}(x) \end{bmatrix}^T$
%can be expressed as
\begin{multline}\label{eq:master_model}
m_k(x) = h(q_k(x_k)) + \nabla h(q_k(x_k))^T \nabla q_k(x_k)^T (x-x_k) \\
+ \tfrac{1}{2} (x - x_k)^T \left(\sum_{i=1}^p \partial_i h(q_k(x_k))\nabla^2 q_{k, i}(x_k)+ \nabla q_k(x_k) \nabla^2 h(q_k(x_k)) \nabla q_k(x_k)^T\right) (x - x_k).
\end{multline}
Under our stated assumptions, the second-order derivatives of this model are bounded and the models $\{m_k\}$ are fully linear with respect to $f$ within a trust region.  These facts are stated formally in the next two lemmas; see Appendix~\ref{app:lemmas_proof} of the supplementary material for proofs.

\begin{lemma}\label{lemma:bounded_hessian_m}
	Suppose that Assumption~\ref{assumption:q} holds.  Then, there exists $\kappa_{\bhm} \in \RR_{> 0}$ such that $\|\nabla^2 m_k(x_k) \|_{(\tr, \trc)} \leq \kappa_{\bhm}$ for all $k \in \NN$.
\end{lemma}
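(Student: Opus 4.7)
My plan is to observe first that, since $m_k$ is quadratic in $x-x_k$ by \eqref{eq:master_model}, we have the identity
\begin{equation*}
\nabla^2 m_k(x_k) = \sum_{i=1}^p \partial_i h(q_k(x_k)) \nabla^2 q_{k,i}(x_k) + \nabla q_k(x_k)\, \nabla^2 h(q_k(x_k))\, \nabla q_k(x_k)^T.
\end{equation*}
The strategy is then to apply the triangle inequality in the $(\tr,\trc)$-operator norm to the two summands and bound each uniformly in $k$, using only quantities that Assumption~\ref{assumption:q} declares bounded (namely $\kappa_{\partial_i h}$, $L_{\nabla q_i}$, $\kappa_{\nabla q_i}$, $L_{\nabla h}$) together with the norm-equivalence constants from \eqref{eq.equivconstants}.

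For the first summand, I would use the scalar bound $|\partial_i h(q_k(x_k))| \leq \kappa_{\partial_i h}$ guaranteed by Assumption~\ref{assumption:q} (since $q_k(x_k) \in {\cal Y}$), and combine it with an operator-norm bound on $\nabla^2 q_{k,i}(x_k)$. The latter follows from the Lipschitz continuity of $\nabla q_{k,i}$ stated in Assumption~\ref{assumption:q}: for any continuously differentiable vector field that is $L$-Lipschitz from $\|\cdot\|_{\tr}$ to $\|\cdot\|_{\trc}$, its Jacobian at any point has $(\tr,\trc)$-operator norm bounded by $L$. Hence $\|\nabla^2 q_{k,i}(x_k)\|_{(\tr,\trc)} \leq L_{\nabla q_i}$, and summing over $i \in [p]$ gives a bound $\sum_{i=1}^p \kappa_{\partial_i h} L_{\nabla q_i}$ for the first summand.

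For the second summand, I would invoke the equivalence inequality \eqref{eq.equiv3} with $V = \nabla q_k(x_k) \in \RR^{n_x\times p}$ and $U = \nabla^2 h(q_k(x_k)) \in \RR^{p\times p}$ to obtain
\begin{equation*}
\|\nabla q_k(x_k)\, \nabla^2 h(q_k(x_k))\, \nabla q_k(x_k)^T\|_{(\tr,\trc)} \leq \kappa_{\tr_2^*}\, \|\nabla q_k(x_k)\|_{\trc,1}^2\, \|\nabla^2 h(q_k(x_k))\|_2.
\end{equation*}
Since $\nabla h$ is $L_{\nabla h}$-Lipschitz in the $2$-norm, the same Jacobian-bound argument as above yields $\|\nabla^2 h(q_k(x_k))\|_2 \leq L_{\nabla h}$. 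The factor $\|\nabla q_k(x_k)\|_{\trc,1}$ is, by the definition of the $L_{p,q}$-norm in Section~\ref{sec:notation}, the sum over $i\in[p]$ of $\|\nabla q_{k,i}(x_k)\|_{\trc}$, and each of these column norms is bounded by a constant multiple of $\|\nabla q_{k,i}(x_k)\|_2 \leq \kappa_{\nabla q_i}$ via the norm-equivalence \eqref{eq.equiv7} (applied to the dual norm, or an analogous constant). This produces a uniform-in-$k$ bound on $\|\nabla q_k(x_k)\|_{\trc,1}$ in terms of $\kappa_{\nabla q}$.

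Combining the two pieces gives the desired constant $\kappa_{\bhm}$ depending only on the Lipschitz and boundedness constants of Assumption~\ref{assumption:q} together with the norm-equivalence constants from \eqref{eq.equivconstants}. I expect the main obstacle to be purely bookkeeping: the mixed $\|\cdot\|_{(\tr,\trc)}$ operator norm is unusual, so one must be careful to apply the right equivalence from \eqref{eq.equivconstants} in each place (in particular \eqref{eq.equiv3} for the outer $V U V^T$ structure, and a column-wise $\ell_2 \to \ell_{\trc}$ equivalence for $\|\nabla q_k(x_k)\|_{\trc,1}$). No genuinely new estimates beyond Assumption~\ref{assumption:q} and \eqref{eq.equivconstants} are needed.
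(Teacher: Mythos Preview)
Your proposal is correct and follows essentially the same approach as the paper: split $\nabla^2 m_k(x_k)$ into the two summands from \eqref{eq:master_model}, bound the first via $|\partial_i h|\leq \kappa_{\partial_i h}$ and $\|\nabla^2 q_{k,i}(x_k)\|_{(\tr,\trc)}\leq L_{\nabla q_i}$, and bound the second via \eqref{eq.equiv3} together with $\|\nabla^2 h(q_k(x_k))\|_2\leq L_{\nabla h}$ and the column-wise bound $\|\nabla q_k(x_k)\|_{\trc,1}\leq \kappa_{\nabla q}$ from \eqref{eq:q_consts}. The only cosmetic difference is that the paper writes the final constant as $\kappa_{\bhm}=\sum_{i=1}^p \kappa_{\partial_i h} L_{\nabla q_i} + \kappa_{\tr_2^*}\kappa_{\nabla q}^2 L_{\nabla h}$ without inserting an explicit $\ell_2\to\ell_{\trc}$ equivalence factor on $\|\nabla q_k(x_k)\|_{\trc,1}$, whereas you (more cautiously) flag that such a factor may be needed; either way the bound is uniform in $k$.
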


\begin{lemma}
	\label{lemma:master_fl}
	Suppose that Assumptions~\ref{assumption:F_h} and \ref{assumption:q} hold and consider arbitrary $k \in \NN$ and ${\cal D}_k := \{d_1, \dots, d_{n_x}\} \subset B_{\tr}(0, \Delta_k) \cap (\Omega - x_k)$ such that $(a)$ for all $i \in [p]$, the model $q_{k,i}$ satisfies~\eqref{eq:gen_approx_intrpl_cond} on ${\cal D}_k \cup \{0\}$, and $(b)$ the matrix $\begin{bmatrix} d_1 & \cdots & d_{n_x} \end{bmatrix}$ is invertible and, for some $\Lambda \in \RR_{>0}$ and $\bar \kappa_{\apx} \in \RR_{>0}$, \eqref{cond:fl-1} and \eqref{cond:fl-2} hold.  Then, $m_k$ defined by \eqref{eq:master_model} satisfies \eqref{eq:main_thm_fcn_bnd} in $B(x_k, \Delta_k) \cap \Omega$ for some $(\kappa_{\ef},\kappa_{\eg}) \in \RR_{> 0}^2$ independent of $k$.
	
\end{lemma}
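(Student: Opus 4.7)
The plan is to establish the two inequalities in \eqref{eq:main_thm_fcn_bnd} for $m_k$ with respect to $f$ by leveraging (i) Lemma~\ref{lemma:comp_fl}, which under the stated hypotheses makes each $q_{k,i}$ a fully linear model for $F_i$ on $B_{\tr}(x_k,\Delta_k)\cap\Omega$, (ii) Lemma~\ref{lemma:bounded_hessian_m}, which bounds $\|\nabla^2 m_k(x_k)\|_{(\tr,\trc)}$ uniformly in $k$, and (iii) the smoothness hypotheses on $h$ in Assumption~\ref{assumption:q}. The key observation that makes the proof clean is that $m_k$, as defined in \eqref{eq:master_model}, is exactly the second-order Taylor expansion of the composite function $g_k(x):=h(q_k(x))$ about $x_k$, so that $m_k(x_k)=g_k(x_k)$, $\nabla m_k(x_k)=\nabla q_k(x_k)\nabla h(q_k(x_k))=\nabla g_k(x_k)$, and $\nabla^2 m_k(x_k)=\nabla^2 g_k(x_k)$.

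For the function-value bound, I would decompose
\[
m_k(x)-f(x) \;=\; \bigl[m_k(x)-g_k(x)\bigr] \;+\; \bigl[h(q_k(x))-h(F(x))\bigr].
\]
The second bracket is bounded by $L_h\|q_k(x)-F(x)\|_2\leq L_h\sqrt{p}\,\hat\kappa_{\ef}\Delta_k^2$ by \eqref{eq.hfuncLip} and \eqref{eq:fl_m} from Lemma~\ref{lemma:comp_fl}. For the first bracket, since $g_k$ is twice continuously differentiable on ${\cal X}$, the integral form of Taylor's remainder gives
$|m_k(x)-g_k(x)|\leq \tfrac12\|x-x_k\|_{\tr}^2\sup_{\xi\in[x_k,x]}\|\nabla^2 g_k(\xi)\|_{(\tr,\trc)}$,
and the supremum is uniformly bounded by the same argument (indeed the same constant up to absolute constants) used in the proof of Lemma~\ref{lemma:bounded_hessian_m}, because that lemma's bound on $\|\nabla^2 m_k(x_k)\|$ is produced from bounds on $\partial_i h$, $\nabla h$, $\nabla^2 h$, $\nabla q_{k,i}$, $\nabla^2 q_{k,i}$ that hold throughout ${\cal X}\times{\cal Y}$, not merely at $x_k$. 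Combining yields $|m_k(x)-f(x)|\leq \kappa_{\ef}\Delta_k^2$ for a $k$-independent $\kappa_{\ef}$.

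For the gradient bound, because $m_k$ is quadratic, $\nabla m_k(x)=\nabla g_k(x_k)+\nabla^2 m_k(x_k)(x-x_k)$, so I would write
\[
\nabla m_k(x)-\nabla f(x) \;=\; \bigl[\nabla g_k(x_k)-\nabla f(x_k)\bigr] \;+\; \nabla^2 m_k(x_k)(x-x_k) \;-\;\bigl[\nabla f(x)-\nabla f(x_k)\bigr].
\]
The third term is $O(\Delta_k)$ in $\|\cdot\|_{\trc}$ by the Lipschitz estimate \eqref{eq.needthis}. The middle term is bounded in $\|\cdot\|_{\trc}$ by $\kappa_{\bhm}\Delta_k$ via Lemma~\ref{lemma:bounded_hessian_m} and $\|x-x_k\|_{\tr}\leq\Delta_k$. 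For the first term I use $\nabla f(x_k)=\nabla F(x_k)\nabla h(F(x_k))$ and add/subtract:
\[
\nabla g_k(x_k)-\nabla f(x_k) \;=\; [\nabla q_k(x_k)-\nabla F(x_k)]\nabla h(q_k(x_k)) + \nabla F(x_k)[\nabla h(q_k(x_k))-\nabla h(F(x_k))].
\]
The first piece is $O(\Delta_k)$ by summing the column-wise bounds in \eqref{eq:fl_g} from Lemma~\ref{lemma:comp_fl} and using $\|\nabla h\|_2\leq\kappa_{\nabla h}$, together with the norm-equivalence inequalities \eqref{eq.equiv2}--\eqref{eq.equiv4} to pass between $\|\cdot\|_2$ and $\|\cdot\|_{\trc}$. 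The second piece is $O(\Delta_k^2)$ via \eqref{eq.hLip}, \eqref{eq:fl_m}, and the uniform bound on $\|\nabla F(x_k)\|$ implied by Assumption~\ref{assumption:F_h}. All constants produced are independent of $k$, giving $\|\nabla m_k(x)-\nabla f(x)\|_{\trc}\leq\kappa_{\eg}\Delta_k$.

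\paragraph{Anticipated obstacle.} The routine chain-rule computations are straightforward, so the main nuisance is the bookkeeping across the mixed norms $\|\cdot\|_{\tr}$, $\|\cdot\|_{\trc}$, $\|\cdot\|_2$, $\|\cdot\|_{\trc,1}$, and $\|\cdot\|_{(\tr,\trc)}$: each cross-term in the decompositions above must be converted carefully with the equivalence constants from \eqref{eq.equivconstants} so that the final Lipschitz-type bound lives in the right norm and so that the resulting $\kappa_{\ef}$, $\kappa_{\eg}$ are genuinely independent of $k$. Once the norm conversions are carried out systematically, the argument is essentially the same as the proof of Lemma~\ref{lemma:comp_fl} composed with the smoothness of $h$, so no additional assumption beyond those already invoked for Lemmas~\ref{lemma:comp_fl} and~\ref{lemma:bounded_hessian_m} should be required.
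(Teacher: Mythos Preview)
Your proposal is correct and close in spirit to the paper's argument, but it is organized through a slightly different decomposition that is, if anything, cleaner. For the function-value bound, the paper applies the Mean Value Theorem directly to $f$ and to $m_k$ (writing each as value-at-$x_k$ plus a gradient term at an intermediate point) and then compares the resulting gradient expressions; you instead pass explicitly through $g_k:=h\circ q_k$, bounding $|m_k-g_k|$ by a second-order Taylor remainder for $g_k$ and $|g_k-f|$ by Lipschitz continuity of $h$ together with \eqref{eq:fl_m}. Your route makes the role of the identity ``$m_k$ is the second-order Taylor polynomial of $g_k$ at $x_k$'' transparent and avoids a somewhat delicate identification in the paper's \eqref{eq:taylor_m}. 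For the gradient bound the two arguments coincide in structure: both split $\nabla m_k(x)-\nabla f(x)$ into the difference at $x_k$, the quadratic term $\nabla^2 m_k(x_k)s$, and the variation of $\nabla f$ along $s$. The only substantive difference is that the paper bounds the last piece by writing $\nabla f(x)-\nabla f(x_k)=\int_0^1\nabla^2 f(x_k+\tau s)s\,d\tau$ and estimating $\|\nabla^2 f\|_{(\tr,\trc)}$ via \eqref{eq:hessian_f}, whereas you invoke the Lipschitz estimate \eqref{eq.needthis} directly; your choice sidesteps any appeal to $\nabla^2 F_i$, which is not explicitly assumed to exist in Assumption~\ref{assumption:F_h}. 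One small cosmetic point: the Taylor-remainder inequality you state for $|m_k(x)-g_k(x)|$ should carry a constant $1$ rather than $\tfrac12$ (since $m_k$ is the \emph{second}-order Taylor polynomial of $g_k$, the remainder involves $\nabla^2 g_k(\xi)-\nabla^2 g_k(x_k)$, which you then bound by $2\sup\|\nabla^2 g_k\|$), but this does not affect the conclusion.
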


%************
% Subsection
%************
\subsection{Algorithm Description} \label{sec.gen_alg_desc}

\begin{algorithm}[t]
	\caption{: DFO Framework with Approx.~Function Values at Interpolation Points}
	\label{alg.dfo_approx}
	\begin{algorithmic}[1]
		\Require $\Delta_0 \in (0,\infty)$; $\Delta_{\max} \in (0,\infty)$; %$\delta_{\max} \in (0,\infty)$; 
		$\gamma_{\dec} \in (0,1)$; $\gamma_{\inc} \in (1,\infty)$; $\eta \in (0,\infty)$; $\mu \in (0,\infty)$; 
		$\epsilon_c \in (0,\infty)$; $\kappa_{\apx} \in (0,\infty)$; $c_{\app} \in [0, \infty)$; 
		$\kappa_{\fcd} \in (0,1]$.
		\Require  Initial iterate $x_0 \in \RR^{n_x}$.
		\For{$k = 0, 1, \dots$}
		\State Set $\delta_{k} \gets c_{\app} \Delta_k^2$. \label{st:delta_Delta_ratio}    
		\State \algparboxtwo{Find $\DDD_{k} \subset \Omega - x_k$ such that~\eqref{cond:fl-1} holds and the approximate values $\{\tilde{F}_i(x_k+d, \delta_k)\}_{i\in[p],d \in \DDD_k\cup\{0\}}$ satisfy \eqref{eq.fec-new} and  yield $\{q_{k,i}\}_{i=1}^p$ and $m_k$ satisfying \eqref{eq:gen_approx_intrpl_cond}, %\eqref{eq.fec-new}, 
			Assumption~\ref{assumption:q}, and \eqref{eq:master_model}. \label{st:intrpl_pts}}
		\If {$\pi_k^m \leq \epsilon_c$ and $\Delta_k > \mu \pi_k^m$}\label{st:check_criticality}  
		\State \underline{Criticality step}: set $s_k=0$, $\rho_k=0$, $\Delta_{k+1} \gets \gamma_{\dec} \Delta_k$, and $x_{k + 1} \gets x_k$. \label{st:criticality}
		\Else
		\State Compute $s_k$ satisfying \eqref{eq:cauchy_dec}. \label{st.optimize_model}
		\State Evaluate $F_i(x_k + s_k)$  for all $i\in [p]$.
		\State Compute $\rho_k$ by \eqref{eq:act_pred_ratio}.
		\If {$\rho_k \geq \eta$} \label{st:cond_1_update}
		\State \underline{Successful step}: set $\Delta_{k+1} \gets \min \{\gamma_{\inc} \Delta_k, \Delta_{\max}\}$ and $x_{k+1} \gets x_k + s_k$.
		\Else \label{st:cond_2_update}
		\State \underline{Unsuccessful step}: set $\Delta_{k+1} \gets \gamma_{\dec} \Delta_k$ and $x_{k+1} \gets x_k$.
		\EndIf
		\EndIf
		\EndFor
	\end{algorithmic}
\end{algorithm}

Our algorithmic framework is stated as Algorithm~\ref{alg.dfo_approx}.  The main structure of the algorithm is similar to the general trust-region DFO framework considered in \cite{hough2021model}. In each iteration $k \in \NN$, a set of directions $\DDD_k$ are determined  such that one obtains approximate function values satisfying
\begin{equation}\label{eq.fec-new}
|F_i(x_k + d) - \tilde F_i(x_k+d, \delta_k) | \leq \kappa_{\apx} \delta_k \ \ \text{for all}\ \ d \in {\cal D}_k \cup \{0\}.
\end{equation}
Since $\delta_k \gets c_{\app} \Delta_k^2$ in line~\ref{st:delta_Delta_ratio}, it is clear that \eqref{cond:fl-2} holds with $ \bar\kappa_{\apx} =c_{\app}\kappa_{\apx}$, which along with \eqref{eq:gen_approx_intrpl_cond} and \eqref{eq:master_model} means that the models $\{q_{k,i}\}_{i\in[p]}$ and $m_k$ satisfy the requirements of the lemmas in the previous section. In the case of solving a sequence of related optimization problems, discussed in Section~\ref{sec.sequence_formulation}, an implementable strategy for approximating function values using prior function value information (to ensure \eqref{eq.fec-new}) is presented and analyzed in Section~\ref{sec:fcn_approx}.

Upon construction of $m_k$, the algorithm considers the stationary measure
\begin{align}\label{eq:stationarity_meas}
\pi_k^m = \Big| \min_{\substack{x_k + d \in \Omega \\ \| d \|_{\tr} \leq 1 }} \nabla m_k(x_k)^T d \Big|.
\end{align}
Specifically, if the algorithm finds that $\pi_k^m$ is smaller than a threshold $\epsilon_c \in \RR_{>0}$ and the trust region radius is greater than $\mu \pi_k^m$ for a constant $\mu \in \RR_{>0}$, then a criticality step (see line~\ref{st:criticality}) is performed, meaning the trust region radius is decreased and the iterate is unchanged.  The purpose of the criticality step is to ensure that the sufficiently small value for $\pi_k^m$ is due to a stationarity measure for $f$ also being sufficiently small (see \eqref{def.pif}), not merely due to model inaccuracy.  In any case, if line \ref{st.optimize_model} is reached, then a step $s_k$ is computed in the trust region that guarantees Cauchy decrease, i.e.,
\begin{align}\label{eq:cauchy_dec}
m_k(x_k) - m_k(x_k + s_k) \geq \kappa_{\fcd} \pi_k^m \min \left \{\frac{\pi_k^m}{ \kappa_{\bhm} + 1}, \Delta_k, 1 \right\}
\end{align}
is achieved for some user-prescribed $\kappa_{\fcd} \in (0,1]$, where $\kappa_{\bhm} \in \RR_{>0}$ is defined as in Lemma~\ref{lemma:bounded_hessian_m}.  Sufficient conditions on $s_k$ to achieve \eqref{eq:cauchy_dec} and an algorithm for obtaining such a step can be found in \cite{conn1993global}; e.g., an exact minimzer of $m_k$ within the trust region and bound constraints satisfies \eqref{eq:cauchy_dec}.  After calculating the step $s_k$, the trial point $x_k + s_k$ is considered.  According to the actual-to-predicted reduction ratio, namely,
\begin{align}\label{eq:act_pred_ratio}
\rho_k \coloneqq \frac{f(x_k) - f(x_k + s_k)}{m_k(x_k) -  m_k(x_k + s_k)},
\end{align}
it is decided whether the trial point should be accepted as the new iterate and how the trust region radius should be updated.  If $\rho_k$ is greater than a threshold $\eta \in \RR_{>0}$ then the trial point is accepted; otherwise, it is rejected.  In addition, if $\rho_k \geq \eta$, then the trust region radius is increased; otherwise, it is decreased.

The main difference between our algorithm and other modern model-based DFO frameworks is that function value approximations may be used at interpolation points in place of exact function values. 
However, if, for some $d \in \DDD_k$, the surrogate model cannot approximate  $F_i(x_k + d)$  with the desired accuracy or there is not enough information available for approximating $F_i(x_k + d)$, then one may need to evaluate the black-box function $F_i$ at $x_k + d$ explicitly.
Another important feature of our framework is that one can also take into account the expense and accuracy of function values when  additional interpolation points are added to the set $\DDD_k$ in line~\ref{st:intrpl_pts}; see Section~\ref{sec.intrplset}.

%*****************
% Algorithm
%*****************

\begin{remark}\label{remark:not-fl}
	\emph{We emphasize that our presentation of Algorithm \ref{alg.dfo_approx}---in particular, its requirement that $\{q_{k,i}\}_{i\in[p]}$ and $m_k$ are fully linear for all $k \in \NN$---has been simplified for our discussion and analysis, even though our analysis could be extended to situations in which fully linear models are not always required.  This is consistent with other modern DFO methods, such as those in \cite{conn2009introduction, hough2021model}, which do not require a fully linear model in every iteration.  For example, as long as $\|\nabla^2 m_k(x_k)\| \leq \kappa_{\bhm}$ and the computed trial step satisfies a Cauchy decrease condition $($see \eqref{eq:cauchy_dec} below$)$ for all $k \in \NN$, one can relax the requirements on $\DDD_k$ in line \ref{st:intrpl_pts} and accept the trial point if $\rho_k \geq \eta$, even if $\|\begin{bmatrix} d_2 & \dots & d_{n_x + 1} \end{bmatrix}^{-1}\|_2 > \frac{\Lambda}{\Delta_k}$ $($i.e., \eqref{cond:fl-1} is violated$)$ or $\delta_k > c_{\apx} \Delta_k^2$ $($i.e., \eqref{cond:fl-2} may be violated$)$.  In such a setting, the algorithm can be modified as follows. 
		%a couple of modifications of the algorithm are needed. 
		First, one needs to modify the step acceptance conditions to stop the algorithm from decreasing the trust region radius if a step has been computed with a model that is not fully linear.  Second, in the criticality step, if the model is not fully linear, then again the trust region radius should not be updated and instead a fully linear model should be constructed.}
\end{remark}

%************
% Subsection
%************
\subsection{Convergence Result} \label{sec.gen_conv_analysis}

A convergence result for Algorithm~\ref{alg.dfo_approx} follows in a similar manner as for Algorithm~10.1 in \cite{conn2009introduction} and Algorithm~1 in  \cite{conejo2013global}.  We state the result in terms of a stationarity measure for the minimization of $f$ over $\Omega$ that is similar to the previously defined measure with respect to $m_k$ (recall \eqref{eq:stationarity_meas}), namely,
\begin{equation}\label{def.pif}
\pi_k^f = \Big| \min_{\substack{x_k + d \in \Omega \\ \| d \|_{\tr} \leq 1 }} \nabla f(x_k)^T d \Big|;
\end{equation}
see \cite{conn1993global, hough2021model}.  Specifically, under the following assumption, the following theorem holds.

\begin{assumption}\label{assumption:bounded_f}
	The objective $f: \RR^{n_x} \to \RR$ is bounded below on $\LLL_{\enl} \cap \Omega$.
\end{assumption}

\begin{theorem}\label{th.main}
	If Assumptions \ref{assumption:F_h}, \ref{assumption:q}, and \ref{assumption:bounded_f} hold, then
	$\displaystyle \lim_{k \to \infty} \pi_k^f = 0$.
\end{theorem}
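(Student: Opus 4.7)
The plan is to follow the classical trust-region convergence argument for DFO methods with fully linear models, in the spirit of Chapter~10 of \cite{conn2009introduction} and Algorithm~1 in \cite{conejo2013global}, adapted to the bound-constrained setting with approximate function values at interpolation points. The key building blocks are already in place: Lemma~\ref{lemma:master_fl} guarantees that $m_k$ is fully linear with respect to $f$ on $B_{\tr}(x_k,\Delta_k)\cap\Omega$ with constants $(\kappa_{\ef},\kappa_{\eg})$ independent of $k$ (since line~\ref{st:delta_Delta_ratio} enforces $\delta_k = c_{\app}\Delta_k^2$, so \eqref{cond:fl-2} holds with $\bar\kappa_{\apx}=c_{\app}\kappa_{\apx}$), Lemma~\ref{lemma:bounded_hessian_m} bounds the model Hessian uniformly by $\kappa_{\bhm}$, and the Cauchy decrease \eqref{eq:cauchy_dec} is enforced whenever a trial step is computed.

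First, I would relate $\pi_k^m$ and $\pi_k^f$. Since both \eqref{eq:stationarity_meas} and \eqref{def.pif} minimize a linear function of $d$ over the same compact set $\{d:x_k+d\in\Omega,\ \|d\|_{\tr}\leq 1\}$, a triangle-inequality argument combined with the fully linear gradient bound yields $|\pi_k^m-\pi_k^f|\leq \kappa_{\eg}\Delta_k$. Next, I would show that the trust region radius cannot converge to zero whenever $\pi_k^f$ is bounded away from zero: assuming for contradiction that $\pi_k^f\geq\epsilon$ along an infinite subsequence, the bound from Step~1 implies $\pi_k^m\geq \epsilon/2$ once $\Delta_k\leq\epsilon/(2\kappa_{\eg})$, so the criticality test in line~\ref{st:check_criticality} can only trigger finitely often along that subsequence. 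Then, using \eqref{eq:cauchy_dec}, $\|\nabla^2 m_k(x_k)\|\leq\kappa_{\bhm}$, the fully linear error bound $\kappa_{\ef}\Delta_k^2$ applied at both $x_k$ and $x_k+s_k$, and Assumption~\ref{assumption:F_h}, one obtains $|\rho_k-1|=O(\Delta_k/\pi_k^m)$; hence any step with $\Delta_k$ below a threshold proportional to $\epsilon$ must satisfy $\rho_k\geq\eta$, preventing $\Delta_k$ from being reduced below that threshold.

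Combined with Assumption~\ref{assumption:bounded_f}, this yields $\liminf_{k\to\infty}\pi_k^f=0$: every successful iteration with $\pi_k^m$ and $\Delta_k$ bounded below by positive constants produces a reduction in $f$ bounded below by a positive constant via \eqref{eq:cauchy_dec} and $\rho_k\geq\eta$, yet the total reduction is bounded by $f(x_0)-\inf_{\LLL_{\enl}\cap\Omega}f<\infty$, so only finitely many such iterations can occur. To upgrade $\liminf=0$ to the full limit, I would use the standard ``three-subsequence'' argument (as in Theorem~10.13 of \cite{conn2009introduction}): if $\pi_k^f$ oscillated between values above some $2\varepsilon$ and below $\varepsilon$ infinitely often, Lipschitz continuity of $\nabla f$ from \eqref{eq.needthis} together with the summable step-lengths $\sum \|s_k\|_{\tr}<\infty$ (implied by the finite total reduction and Cauchy decrease) would force the iterates to be unable to traverse between the two regimes, yielding a contradiction.

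The main obstacle, compared to the unconstrained case, is the careful handling of the bound-constrained stationarity measures $\pi_k^m$ and $\pi_k^f$ and of the criticality step, since all standard inequalities must be expressed in terms of these projected quantities rather than gradient norms; fortunately, \eqref{eq:stationarity_meas} and \eqref{def.pif} are compatible (same feasible search set), so the bridge in Step~1 is clean. The one additional subtlety introduced by our framework over \cite{conn2009introduction, conejo2013global} is that interpolation values are approximate rather than exact, but this is fully absorbed into the constants $(\kappa_{\ef},\kappa_{\eg})$ from Lemma~\ref{lemma:master_fl} once $\delta_k=c_{\app}\Delta_k^2$, so the rest of the analysis carries over verbatim.
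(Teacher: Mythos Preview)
Your proposal is essentially correct and follows the same route as the paper's proof: the paper establishes $|\pi_k^f-\pi_k^m|\leq\kappa_{\eg}\Delta_k$, shows that $\Delta_k$ is bounded below whenever $\pi_k^f\geq\epsilon$ (via the $|\rho_k-1|=O(\Delta_k/\pi_k^m)$ estimate you describe), deduces $\liminf_k\pi_k^f=0$, and then runs the three-subsequence argument using Lipschitz continuity of $\nabla f$ from \eqref{eq.needthis}.

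One point needs tightening. Your claim that $\sum_k\|s_k\|_{\tr}<\infty$ globally, ``implied by the finite total reduction and Cauchy decrease,'' is not true in general: the Cauchy decrease \eqref{eq:cauchy_dec} gives a reduction of order $\pi_k^m\min\{\pi_k^m/(\kappa_{\bhm}+1),\Delta_k,1\}$, which can be $o(\Delta_k)$ when $\pi_k^m\to 0$, so summability of the total decrease does not force summability of the steps. The paper's fix (and the standard one in Theorem~10.13 of \cite{conn2009introduction}) is to restrict the summability argument to the subsequence $\mathcal{K}=\bigcup_i\{k\in\mathcal{S}:t_i\leq k<l_i\}$ on which $\pi_k^m\geq\epsilon_{\mc}>0$; there the Cauchy bound yields $\Delta_k\leq(\eta\kappa_{\fcd}\epsilon_{\mc})^{-1}(f(x_k)-f(x_{k+1}))$ once $\Delta_k$ is small enough (the paper first establishes $\Delta_k\to 0$ to guarantee the min in \eqref{eq:cauchy_dec} is attained by $\Delta_k$), and hence $\|x_{t_i}-x_{l_i}\|_{\tr}\to 0$. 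With that correction your sketch matches the paper's argument.
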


Given the properties of Algorithm~\ref{alg.dfo_approx} provided in this section, most notably the fact that it generates fully linear models, a complete proof of Theorem~\ref{th.main} follows that of other such convergence analyses of model-based trust-region DFO methods. 
We provide the proof in Appendix~\ref{app:conv_proof} of the supplementary material for completeness.
% As a brief overview of the proof, we provide a sketch in Appendix~\ref{app:conv_proof}.

%*********
% Section
%*********
\section{Interpolation Set Construction}\label{sec.intrplset}

In this section, we provide an implementable method for determining an interpolation set in line~\ref{st:intrpl_pts} of Algorithm~\ref{alg.dfo_approx} that contains $n_x + 1$ sufficiently affinely independent directions in the sense that \eqref{cond:fl-1} holds.  
Our strategy involves first running Algorithm 4.1 in \cite{wild2008mnh}, which is presented here as Algorithm~\ref{alg:Affpoints_main}. It chooses interpolation points from a candidate set $\cal{C}$.
\begin{algorithm}[t]
	\caption{: Construction of elements for $\DDD_k$ from ordered candidate set $\cal{C}$}
	\label{alg:Affpoints_main}
	\begin{algorithmic}[1]
		\Require Values and parameters from Algorithm~\ref{alg.dfo_approx}.%; parameters from Table~\ref{tab.parameters}.
		\State Set $\DDD_k \gets \{0\}$ and $Z \gets I_{n_x}$.
		\State Choose $\CCC \subseteq B_{\tr}(x_k, \Delta_k) \cap \Omega$ as $\{x^{(1)}, \dots, x^{(|\CCC|)}\}$ where $x^{(i)}$ is preferred over $x^{(i+1)}$.\label{st:cal_C_set}
		\For{$j = 1, \dots, |\CCC|$} 
		\If{$\left \|\proj{Z}{\frac{ x^{(j)} - x_k}{\Delta_k}} \right\|_2 \geq \xi$ \label{st:pivot_magnitude_main}}
		\State Set $\DDD_k \gets \DDD_k \cup \{x^{(j)} -  x_k \}$. 
		\State \algparboxthree{Compute orthonormal $Z$ with $\textrm{span}(Z) = \nulll([\DDD_k]_{1:\text{end}})$ (see \eqref{eq.D_not_0}).} \label{st.Z_in_alg}
		\EndIf
		\EndFor
		\State \Return $(\DDD_k, Z)$
	\end{algorithmic}
\end{algorithm}
If the output of this procedure is a set of $n_x + 1$ directions, then the set is complete and we will show that \eqref{cond:fl-1} is guaranteed to hold.  Otherwise, Algorithm~\ref{alg:nextaffpoint_main}, also presented in this section, can be called iteratively to generate additional directions until a total of $n_x + 1$ directions have been obtained, where again we show that \eqref{cond:fl-1} is guaranteed to hold. In either case, after obtaining $n_x + 1$ directions, we may call Algorithm 4.2 in \cite{wild2008mnh} to potentially add additional directions to the interpolation set and construct a quadratic model. Additional interpolation directions have no impact on the conditions required by Lemma~\ref{lemma:comp_fl} and the fully linear nature of models.

Algorithm~\ref{alg:Affpoints_main} determines a set of directions $\DDD_k$ by choosing interpolation points from a given candidate set $\mathcal{C}$.  Typically, $\CCC$ only includes points that are not too far from the current iterate; e.g., they may reside in $B_{\tr}(x_k, \Delta_k) \cap \Omega$.
%and have a utility as measured by a function $u$ that exceeds a threshold $u_{\thresh}$ 
We assume that the points in $\CCC$ are ordered by some preference; see line~\ref{st:cal_C_set}.  Typically, in existing implementations, points are more desirable the closer they are to $x_k$, see \cite{wild2017chapter}.
However, in our context, we may also want to take into account how precise or expensive the function approximation $\tilde F_i$ is at a candidate point.  The specific definition will be problem dependent.  In Section~\ref{sec.u_calE} , we describe a procedure for ordering the elements in $\CCC$ that is tailored to the repeated solution of least-squares problems.

Algorithm~\ref{alg:Affpoints_main} adds elements to $\DDD_k$ iteratively while also maintaining $Z \in \RR^{n_x \times n_z}$ as a corresponding orthonormal matrix such that $\textrm{span}(Z) = \nulll([\DDD_k]_{1:\text{end}})$, where
\begin{equation}\label{eq.D_not_0}
[\DDD_k]_{1:\text{end}} = \text{matrix composed of vectors in $\DDD_k$ except 0}.
\end{equation}
Observe that the norm of the projection in line \ref{st:pivot_magnitude_main} of the algorithm can be computed cheaply since it is equal to $\|Z^T(x^{(i)} - x_k)\|_2/\Delta_k$.

If Algorithm~\ref{alg:Affpoints_main} returns an interpolation set with $|\DDD_k| = n_x + 1$, then \eqref{cond:fl-1} holds (as we show in Theorem~\ref{th:shima}).  Otherwise, additional directions need to be added.  Under the assumption that the (potentially infinite) bound constraints defined by $\Omega = [x_L,x_U]$ are not relaxable, we propose Algorithm~\ref{alg:nextaffpoint_main},
\begin{algorithm}[t]
	\caption{: Construction of additional element for $\DDD_k$}
	\label{alg:nextaffpoint_main}
	\begin{algorithmic}[1]
		\Require $\DDD_k$ and corresponding $Z \in \RR^{n_x \times n_z}$.
		\State Set $d \gets \vecZeros$.
		\For {$i = 1, \dots, n_z$}
		\State $\tau^+ \gets \texttt{\textsc{OptStep}}(Z,z_i,\bar \tau(z_i))$.\label{s:tau_plus}
		\State \textbf{if} $\Upsilon(Z,d(z_i,\tau^+)) > \Upsilon(Z,d)$ \textbf{then} set $d \gets d(z_i,\tau^+)$.
		\State $\tau^- \gets \texttt{\textsc{OptStep}}(Z,-z_i,\bar \tau(-z_i))$.\label{s:tau_minus}
		\State \textbf{if} $\Upsilon(Z,d(-z_i,\tau^-)) > \Upsilon(Z,d)$ \textbf{then} set $d \gets d(-z_i,\tau^-)$.
		\EndFor
		\State Set $\DDD_k \gets \DDD_k \cup \{d\}$. 
		\State Compute orthonormal $Z$ with $\textrm{span}(Z) = \nulll([\DDD_k]_{1:\text{end}})$ (see \eqref{eq.D_not_0}).
		\State \Return $(\DDD_k,Z)$.
	\end{algorithmic}
\end{algorithm} 
which employs the subroutine in Algorithm~\ref{alg:opt_step2_main}, 
\begin{algorithm}[t]
	\caption{: \texttt{OptStep}$(Z, v, \bar \tau)$ (see Algorithm~\ref{alg:nextaffpoint_main})}
	\label{alg:opt_step2_main}
	\begin{algorithmic}[1]
		\State \algparboxtwo{Let $\bar\tau_{(0)} \gets 0$ and sort distinct values of $\{\bar\tau\}_{j=1}^{n_x}$ as $0 < \bar\tau_{(1)} \leq \dots \leq \bar\tau_{(\hat n_x)}$.}
		\If {$\| d(v,\bar\tau_{(\hat n_x)})\|_{\tr}\leq \Delta_k$}
		\State \Return $\argmax_{\tau \in \{\bar\tau_{(j)}\}_{j=1}^{\hat n_x}} \Upsilon(Z, d(v,\tau))$.
		\Else
		\State Find $\hat j \in [n_x]$ such that $\|d(v,\bar\tau_{(\hat j - 1)})\|_{\tr} \leq \Delta_k \leq \| d(v,\bar\tau_{(\hat j )}) \|_{\tr}$.
		\State Find $\hat \tau \in [\bar\tau_{(\hat j - 1)}, \bar\tau_{(\hat j)}]$ such that $\| d(v,\hat\tau) \|_{\tr} = \Delta_k$. \label{st:find_tau_main}
		\State \Return $\argmax_{\tau \in \{\bar\tau_{(j)}\}_{j=1}^{\hat j- 1} \cup \{\hat\tau\}} \Upsilon(Z, d(v,\tau))$.
		\EndIf
	\end{algorithmic}
\end{algorithm}
for augmenting the set $\DDD_k$ in a manner that ensures that \eqref{cond:fl-1} continues to hold.  This algorithm can be called iteratively until $|\DDD_k| = n_x+1$.
\subsection{Generating Interpolation Points for Bound Constraints}\label{sec.intpl_bounds}
Consider $\DDD_k$ with $|\DDD_k| < n_x + 1$ obtained after a call to Algorithm~\ref{alg:Affpoints_main} and (potentially) call(s) to Algorithm~\ref{alg:nextaffpoint_main}.  Since $|\DDD_k| < n_x + 1$, it follows that $\nulll([\DDD_k]_{1:\text{end}}) \neq \{0\}$, so $n_z \geq 1$.  Following the spirit of Algorithm~\ref{alg:Affpoints_main}, one can augment $\DDD_k$ with $d \in \RR^{n_x}$ in a manner that guarantees that \eqref{cond:fl-1} continues to hold by ensuring that
\begin{align}\label{cond1:nextpt_main}
\left \| \proj{Z} {\frac{d}{\Delta_k}} \right\|_2 \geq \xi,\ \ \|d\|_{\tr} \leq \Delta_k,\ \ \text{and}\ \ x_k + d \in \Omega
\end{align}
for $\xi$ sufficiently small; see Theorem~\ref{thm:poised_direction}.  Recalling that $\|\proj{Z} {d/\Delta_k}\|=\|Z^Td\|/\Delta_k$, 
one way in which one might ensure this property is by solving
\begin{align}\label{eq:opt_nextpt_main}
\max_{d \in \RR^n}\ \Upsilon(Z,d)\ \ \st\ \ \| d \|_{\tr} \leq  \Delta_k\ \ \text{and}\ \ x_L - x_k \leq  d \leq x_U - x_k,
\end{align}
where $\Upsilon(Z,d) := \|Z^Td\|_2^2$.  However, a globally optimal solution of \eqref{eq:opt_nextpt_main} may be computationally expensive to obtain, and such a solution is not actually necessary. % to obtain as long as $\xi$ is sufficiently small.

The motivation for our proposed Algorithm~\ref{alg:nextaffpoint_main} is to solve an approximation of problem~\eqref{eq:opt_nextpt_main}, a solution of which turns out to be sufficient for ensuring that \eqref{cond:fl-1} continues to hold for $\xi$ sufficiently small.  In particular, the algorithm solves
\begin{align}\label{eq:opt_subopt_nextpt_main}
\max_{(v,i,\tau) \in \RR^{n_x} \times [n_z] \times \RR_{\geq0}}\ \Upsilon(Z, d(v,\tau))\ \ \st\ \ v \in \{-z_i,z_i\}\ \ \text{and}\ \ \|d(v,\tau)\|_{\tr} \leq \Delta_k,  
\end{align}
where $z_i$ is the $i$th column of $Z$ and where for any $(i,\tau) \in [n_z] \times \RR_{\geq0}$ and $v \in \{-z_i,z_i\}$ the vector $x_k + d(v,\tau)$ is the projection of $x_k + \tau v$ onto $\Omega = [x_L,x_U]$.  A formula for this vector is easily derived.  In particular, for $v \in \RR^{n_x}$, let $\bar \tau(v) \in (\RR \cup \{\infty\})^{n_x}$ have
\begin{align}\label{eq:tau_bar_main}
\bar \tau_j(v) \coloneqq \begin{cases}
\frac{x_{U, j} - x_{k, j}}{v_j} \ \ &\text{if} \ v_j>0 \ \text{and} \ x_{U,j} < \infty,\\
\frac{x_{L, j} - x_{k, j}}{v_j} \ \ &\text{if} \ v_j<0 \ \text{and} \ x_{L,j} > - \infty,\\
\infty \ \ &\text{otherwise}
\end{cases}
\end{align}
for $j\in[n_x]$. The projection of $x_k + \tau v$ onto $[x_L, x_U]$ is given by $x_k + d(v,\tau)$, where
\begin{align}\label{eq:projected_v_2_main}
d(v,\tau) = \begin{bmatrix} \min\{\tau, \bar \tau_1(v)\} v_1 & \dots & \min \{\tau, \bar \tau_{n_x}(v)\} v_{n_x} \end{bmatrix}^T.
\end{align}

\begin{remark}
	\emph{In \eqref{eq:projected_v_2_main} and subsequent calculations below, we interpret equations and operations involving infinite quantities in the following natural ways: $\infty = \infty$; $\infty \times \infty = \infty$; $\infty - (-\infty) = \infty$; $\min \{\infty, \infty\} = \infty$; $b - \infty = -\infty$ and $b + \infty = \infty$ for any $b \in \RR$; $a \times \infty = \infty$ and $-a \times \infty = -\infty$ for any $a \in \RR_{>0}$; and $\|v\| = \infty$ for any norm $\|\cdot\|$ and any $v \in (\RR \cup \{-\infty,\infty\})^{n_x}$ that has an element equal to $-\infty$ or $\infty$. 
		One additional rule that we intend, which is not natural in all contexts, is $0 \times \infty = 0$.} 
\end{remark}

Algorithm~\ref{alg:nextaffpoint_main} operates by iterating through the columns of $Z$, where for each $i \in [n_z]$ the optimal values of $\tau \in \RR_{\geq0}$ (with respect to maximizing $\Upsilon$) are determined along $z_i$ and $-z_i$.  If the search along either direction yields a larger value of $\Upsilon$ than has been observed so far, the solution candidate (with respect to \eqref{eq:opt_subopt_nextpt_main}) is updated.  The subroutine in Algorithm~\ref{alg:opt_step2_main} is responsible for computing the optimal step sizes.

Our goal in the remainder of this section is to prove that the output $\DDD_k$ from Algorithm~\ref{alg:Affpoints_main} satisfies %\eqref{cond:fl-1}
\eqref{cond1:nextpt_main} and, after any subsequent call to Algorithm~\ref{alg:nextaffpoint_main}, the elements of $\DDD_k$ continue to satisfy \eqref{cond1:nextpt_main}
%\eqref{cond:fl-1} 
as long as $\xi$ is sufficiently small.  This notion of sufficiently small is determined by a threshold revealed in Theorem~\ref{thm:poised_direction}.

Our first lemma bounds $\tau \in \RR_{\geq0}$ below if $d(\cdot,\tau)$ lies on the trust-region boundary.

\begin{lemma}\label{lemma:tau_lb}
	If $(v,\tau) \in \RR^{n_x} \times \RR_{\geq0}$, $\|v\|_2=1$, and $\|d(v,\tau)\|_{\tr} = \Delta_k$, then $\tau \geq \frac{\Delta_k}{\kappa_{\tr_1}}$.
\end{lemma}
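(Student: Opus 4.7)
The plan is to chain two elementary bounds: a componentwise bound that controls $\|d(v,\tau)\|_2$ by $\tau$, followed by the norm-equivalence inequality \eqref{eq.equiv8} to pass from $\|\cdot\|_2$ to $\|\cdot\|_{\tr}$.

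First I would observe that, because $x_k \in \Omega = [x_L, x_U]$, the definition \eqref{eq:tau_bar_main} gives $\bar\tau_j(v) \geq 0$ for every $j \in [n_x]$ (inspect the three cases: $v_j > 0$ forces $x_{U,j} - x_{k,j} \geq 0$; $v_j < 0$ forces $x_{L,j} - x_{k,j} \leq 0$ so the quotient is still nonnegative; otherwise $\bar\tau_j(v) = \infty$). Since also $\tau \geq 0$, the scalars $\min\{\tau, \bar\tau_j(v)\}$ lie in $[0,\tau]$, and from \eqref{eq:projected_v_2_main} I obtain the componentwise bound $|d_j(v,\tau)| \leq \tau |v_j|$ for each $j$.

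Next I would square and sum to conclude $\|d(v,\tau)\|_2^2 \leq \tau^2 \sum_{j=1}^{n_x} v_j^2 = \tau^2 \|v\|_2^2 = \tau^2$, where the final equality uses the hypothesis $\|v\|_2 = 1$. Applying \eqref{eq.equiv8} with this bound then yields
\begin{equation*}
\Delta_k = \|d(v,\tau)\|_{\tr} \leq \kappa_{\tr_1} \|d(v,\tau)\|_2 \leq \kappa_{\tr_1}\, \tau,
\end{equation*}
which, after dividing through by $\kappa_{\tr_1} > 0$, gives the claimed lower bound $\tau \geq \Delta_k/\kappa_{\tr_1}$.

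There is no real obstacle here: the only subtlety is verifying that each $\bar\tau_j(v)$ is nonnegative (so that the $\min$ in \eqref{eq:projected_v_2_main} produces a scalar in $[0,\tau]$ rather than something with unpredictable sign), and that the convention $0 \times \infty = 0$ flagged in the remark preceding the lemma is consistent with treating the product $\min\{\tau,\bar\tau_j(v)\}\, v_j$ componentwise when $v_j = 0$ and $\bar\tau_j(v) = \infty$. Both are immediate from the hypotheses and the stated conventions, so the proof reduces to the two-line chain above.
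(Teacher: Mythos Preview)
Your proof is correct and follows essentially the same approach as the paper: bound $|d_j(v,\tau)|$ componentwise by $\tau|v_j|$ using the nonnegativity of $\bar\tau_j(v)$, then invoke the norm-equivalence constant $\kappa_{\tr_1}$ from \eqref{eq.equiv8}. The only cosmetic difference is that the paper carries out the componentwise estimate directly in the $\|\cdot\|_{\tr}$-norm (raising to the $\tr$-th power) and applies \eqref{eq.equiv8} to $v$, whereas you first bound $\|d(v,\tau)\|_2$ and apply \eqref{eq.equiv8} to $d(v,\tau)$; your route has the minor advantage of not requiring a separate treatment of $\tr=\infty$.
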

\begin{proof}
	By \eqref{eq.equiv8}, \eqref{eq:projected_v_2_main}, and the conditions of the lemma, $\tr<\infty$ implies
	\begin{align*}
	\Delta_k^{\tr} = \|d(v,\tau)\|_{\tr}^{\tr} = \sum_{j=1}^{n_x} (\min\{\tau,\bar \tau_j(v)\})^{\tr} |v_j|^{\tr} \leq \tau^{\tr} \|v\|_{\tr}^{\tr} \leq \tau^{\tr} \kappa_{\tr_1}^{\tr} \|v\|_2^{\tr} = \tau^{\tr} \kappa_{\tr_1}^{\tr},
	\end{align*}
	which yields the desired conclusion.  (Here, the superscript ``$\tr$'' denotes the $\tr$-th power of a number.)
	The result for $\tr=\infty$ can be shown in a similar manner.
\end{proof}

Going forward, corresponding to $v \in \RR^{n_x}$, we choose an index $j^*(v)$ such that
\begin{align}\label{eq:big_comp_unitary_vec}
j^*(v) \in \argmax_{j \in [n_x]} |v_j|.  
\end{align}

The following lemma is trivial (recall \eqref{eq:tau_bar_main}), so we state it without proof.

\begin{lemma}\label{lemma:big_comp_idx}
	If $v \in \RR^{n_x}$ has $\|v\|_2 = 1$, then $\frac{1}{\sqrt{n_x}} \leq |v_{j^*(v)} | \leq 1$ and 
	\begin{align*}
	\max\{\bar \tau_{j^*(v)}(-v), \bar \tau_{j^*(v)}(v)\} \geq \frac{x_{U,{j^*(v)}} - x_{L,{j^*(v)}}}{2|v_{j^*(v)}|}.
	\end{align*}
\end{lemma}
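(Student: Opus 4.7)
The plan is to split the lemma into its two assertions and handle each by elementary reasoning, since no optimization arguments are involved.

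For the bounds on $|v_{j^*(v)}|$, I would use two standard facts. The upper bound $|v_{j^*(v)}| \leq 1$ is immediate from $|v_j| \leq \|v\|_2 = 1$ for every $j \in [n_x]$. For the lower bound, I would apply a pigeonhole-style argument: since $\sum_{j=1}^{n_x} v_j^2 = 1$ and $v_{j^*(v)}^2 = \max_j v_j^2$, we get $n_x \, v_{j^*(v)}^2 \geq \sum_{j=1}^{n_x} v_j^2 = 1$, hence $|v_{j^*(v)}| \geq 1/\sqrt{n_x}$.

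For the lower bound on the maximum of the two step sizes, I would write $j := j^*(v)$ and do a case analysis on the sign of $v_j$ (which is nonzero since $|v_j| \geq 1/\sqrt{n_x} > 0$) and on whether the bounds $x_{L,j}, x_{U,j}$ are finite. If either $x_{L,j} = -\infty$ or $x_{U,j} = \infty$, then one of $\bar\tau_j(v), \bar\tau_j(-v)$ equals $\infty$ by definition \eqref{eq:tau_bar_main}, so the maximum is $\infty$ and the inequality holds trivially (using the conventions in the remark). If both bounds are finite and, say, $v_j > 0$, then
\begin{equation*}
\bar\tau_j(v) = \frac{x_{U,j} - x_{k,j}}{v_j}, \qquad \bar\tau_j(-v) = \frac{x_{L,j} - x_{k,j}}{-v_j} = \frac{x_{k,j} - x_{L,j}}{v_j},
\end{equation*}
and since $v_j = |v_j|$, adding gives $\bar\tau_j(v) + \bar\tau_j(-v) = (x_{U,j} - x_{L,j})/|v_j|$. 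Because both summands are nonnegative (as $x_k \in \Omega$ implies $x_{L,j} \leq x_{k,j} \leq x_{U,j}$), at least one of them is at least half the sum, yielding the claimed bound. The case $v_j < 0$ is symmetric: the roles of $\bar\tau_j(v)$ and $\bar\tau_j(-v)$ swap but the computation of their sum is identical.

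I do not anticipate any real obstacle here — the only mild subtlety is making sure the infinity conventions from the remark are invoked cleanly in the unbounded case, and remembering that $x_k \in \Omega$ (which is maintained throughout Algorithm~\ref{alg.dfo_approx}) is what guarantees both $\bar\tau$ values are nonnegative so that the sum-splitting argument applies.
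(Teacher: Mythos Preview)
Your proposal is correct and is exactly the kind of elementary argument the paper has in mind: the paper itself does not give a proof, stating only that the lemma ``is trivial (recall \eqref{eq:tau_bar_main}).'' Your pigeonhole bound for $|v_{j^*(v)}|$ and the sum-splitting argument for $\max\{\bar\tau_{j^*(v)}(-v),\bar\tau_{j^*(v)}(v)\}$ (using $x_k\in\Omega$ to ensure nonnegativity of both terms, and the remark's conventions in the unbounded case) fill in the details cleanly.
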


Our next lemma shows a lower bound for the optimal value of \eqref{eq:opt_subopt_nextpt_main} if a call to the \texttt{OptStep} subroutine finds all step sizes yielding points within the trust region.

\begin{lemma}\label{lemma:plus_less_Delta}
	Consider line~\ref{s:tau_plus} in Algorithm~\ref{alg:nextaffpoint_main}  for any $i \in [n_z]$. If it is found within the call to \texttt{OptStep}$(Z,z_i,\bar\tau(z_i))$ that $\| d(z_i,\bar\tau_{(\hat n_x)}(z_i)) \|_{\tr} \leq \Delta_k$, then $\tau^+ \in \RR_{\geq0}$ yields $\Upsilon(Z,d(z_i,\tau^+)) \geq \frac{(\bar \tau_{j^*(z_i)}(z_i))^2}{n_x^2}$.  The same property holds for line~\ref{s:tau_minus} and \texttt{OptStep}$(Z,-z_i,\bar\tau(-z_i))$ with respect to $\tau^- \in \RR_{\geq0}$.
\end{lemma}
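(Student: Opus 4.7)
The plan is to exhibit $\tau^* := \bar\tau_{j^*(z_i)}(z_i)$ as a candidate step size in the set over which $\tau^+$ is the argmax, and then to lower-bound $\Upsilon(Z, d(z_i, \tau^*))$ directly. Since $\tau^+$ is chosen via $\tau^+ \in \argmax_{\tau \in \{\bar\tau_{(j)}\}_{j=1}^{\hat n_x}} \Upsilon(Z, d(z_i, \tau))$ in the branch of Algorithm~\ref{alg:opt_step2_main} being considered, it suffices to confirm that $\tau^*$ belongs to that sorted set and to compute $\Upsilon(Z, d(z_i, \tau^*))$.

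First I would dispose of the degenerate situation $\tau^* = 0$, in which case the claimed bound reduces to $\Upsilon(Z, d(z_i,\tau^+)) \geq 0$ and is immediate from $\Upsilon = \|Z^T \cdot\|_2^2 \geq 0$. In the generic case $\tau^* > 0$, I would observe that $\tau^*$ cannot equal $+\infty$: the hypothesis $\|d(z_i,\bar\tau_{(\hat n_x)}(z_i))\|_{\tr} \leq \Delta_k$ combined with $|(z_i)_{j^*(z_i)}| \geq 1/\sqrt{n_x} > 0$ (from Lemma~\ref{lemma:big_comp_idx}) rules this out by the conventions in the remark for operations with infinity. Hence $\tau^* \in (0,\infty)$ is one of the distinct positive values in $\{\bar\tau_j(z_i)\}_{j=1}^{n_x}$ and therefore appears in $\{\bar\tau_{(j)}\}_{j=1}^{\hat n_x}$, making it a valid candidate. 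Consequently $\Upsilon(Z, d(z_i,\tau^+)) \geq \Upsilon(Z, d(z_i,\tau^*))$.

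Next I would bound $\Upsilon(Z, d(z_i, \tau^*))$ from below by dropping all but one term in the squared-norm expression: writing $Z = [z_1 \, \cdots \, z_{n_z}]$, one has
\begin{equation*}
\Upsilon(Z, d(z_i,\tau^*)) = \sum_{l=1}^{n_z} (z_l^T d(z_i,\tau^*))^2 \geq (z_i^T d(z_i,\tau^*))^2.
\end{equation*}
Using \eqref{eq:projected_v_2_main} coordinatewise and the nonnegativity of each $\min\{\tau^*, \bar\tau_l(z_i)\} (z_i)_l^2$ (which follows from the sign structure built into the definition \eqref{eq:tau_bar_main} of $\bar\tau_l(z_i)$), I would keep only the summand with $l = j^*(z_i)$:
\begin{equation*}
z_i^T d(z_i,\tau^*) = \sum_{l=1}^{n_x} \min\{\tau^*, \bar\tau_l(z_i)\}(z_i)_l^2 \geq \min\{\tau^*, \bar\tau_{j^*(z_i)}(z_i)\}\,(z_i)_{j^*(z_i)}^2 = \tau^*\,(z_i)_{j^*(z_i)}^2.
\end{equation*}
Applying $(z_i)_{j^*(z_i)}^2 \geq 1/n_x$ from Lemma~\ref{lemma:big_comp_idx} and squaring yields $\Upsilon(Z, d(z_i,\tau^*)) \geq (\tau^*)^2/n_x^2$, which is exactly the required bound. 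The analogous claim for $\tau^-$ and the direction $-z_i$ is proved identically; observe that $j^*(-z_i) = j^*(z_i)$ and $(-z_i)_{j^*}^2 = (z_i)_{j^*}^2$, so the same Lemma~\ref{lemma:big_comp_idx} estimate carries over.

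The step I expect to be the main (minor) obstacle is the bookkeeping for the infinity conventions, specifically verifying that $\tau^* = \bar\tau_{j^*(z_i)}(z_i)$ is finite under the hypothesis of the lemma so that it genuinely lies among $\{\bar\tau_{(j)}\}_{j=1}^{\hat n_x}$; once this is in place, the remainder is a one-term lower bound on a squared norm combined with Lemma~\ref{lemma:big_comp_idx}.
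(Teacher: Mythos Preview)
Your proof is correct and follows essentially the same approach as the paper: both drop $\Upsilon(Z,d)=\sum_l (z_l^Td)^2$ to the single term $l=i$, expand $z_i^Td(z_i,\tau)=\sum_j [z_i]_j^2\min\{\tau,\bar\tau_j(z_i)\}$, retain only the $j=j^*(z_i)$ summand, and finish with Lemma~\ref{lemma:big_comp_idx}. The only difference is the choice of candidate step: the paper uses $\bar\tau_{(\hat n_x)}$, which is trivially in the argmax set and satisfies $\min\{\bar\tau_{(\hat n_x)},\bar\tau_{j^*(z_i)}(z_i)\}=\bar\tau_{j^*(z_i)}(z_i)$, whereas you use $\tau^*=\bar\tau_{j^*(z_i)}(z_i)$ directly, which requires the extra bookkeeping (finiteness, the degenerate $\tau^*=0$ case) that you correctly supply. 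The paper's choice is marginally cleaner since membership is automatic, but the two arguments are otherwise identical.
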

\begin{proof}
	Without loss of generality, consider the call to \texttt{OptStep}$(Z,z_i,\bar\tau(z_i))$.  The proof for the call to \texttt{OptStep}$(Z,-z_i,\bar\tau(-z_i))$ is nearly identical.  By the conditions of the lemma, \eqref{eq:projected_v_2_main}, Lemma \ref{lemma:big_comp_idx}, and the fact that $\bar\tau_{(\hat n_x)} \geq \bar \tau_{j^*(z_i)}(z_i)$, it follows that
	\begin{align*}
	\Upsilon(Z,d(z_i,\tau^+))
	\geq&\ \Upsilon(Z,d(z_i,\bar\tau_{(\hat n_x)})) = \sum_{l=1}^{n_z} (z_l^Td(z_i,\bar\tau_{(\hat n_x)}))^2 \\
	\geq&\ \left(\sum_{j=1}^{n_x} [z_i]_j d_j(z_i,\bar\tau_{(\hat n_x)})\right)^2 = \left(\sum_{j=1}^{n_x} [z_i]_j^2 \min\{\bar\tau_{(\hat n_x)}, \bar \tau_j(z_i)\} \right)^2 \\
	\geq&\ [z_i]_{j^*(z_i)}^4 \min \{\bar\tau_{(\hat n_x)}, \bar \tau_{j^*(z_i)}(z_i)\}^2 \geq \frac{\bar \tau_{j^*(z_i)}(z_i)^2}{n_x^2},
	\end{align*}
	as desired.
\end{proof}

Next, we present the following lemma, which considers cases when a call to the \texttt{OptStep} subroutine finds that not all step sizes yield points within the trust region.

\begin{lemma}\label{lemma:plus_greater_Delta} Consider line~\ref{s:tau_plus} in Algorithm~\ref{alg:nextaffpoint_main} for any $i \in [n_z]$. If it is found in the call to \texttt{OptStep}$(Z,z_i,\bar\tau(z_i))$  that $\| d(z_i,\bar\tau_{(\hat n_x)}(z_i)) \|_{\tr} > \Delta_k$, then $\tau^+ \in \RR_{\geq0}$ yields
	\begin{equation*}
	\Upsilon(Z,d(z_i,\tau^+)) \geq \left( \min \left\{ \frac{\Delta_k}{\kappa_{\tr_1}}, \bar \tau_{j^*(z_i)}(z_i) \right\} \right)^2 \frac{1}{n_x^2}.
	\end{equation*}
	The same property holds for  line~\ref{s:tau_minus} and \texttt{OptStep}$(Z,-z_i,\bar\tau(-z_i))$ w.r.t.~$\tau^- \in \RR_{\geq0}$.
\end{lemma}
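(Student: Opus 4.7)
The plan is to follow the same algebraic skeleton used in the proof of Lemma~\ref{lemma:plus_less_Delta}, but to replace the reliance on $\bar\tau_{(\hat n_x)}$ (which is now infeasible for the trust region) by the boundary step~$\hat\tau$ constructed inside \texttt{OptStep}. Because $\tau^+$ is chosen as the argmax of $\Upsilon(Z, d(z_i,\cdot))$ over a candidate set that includes $\hat\tau$, it suffices to establish the desired lower bound for $\Upsilon(Z, d(z_i, \hat\tau))$.

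Without loss of generality I focus on line~\ref{s:tau_plus}; the argument for line~\ref{s:tau_minus} is identical after replacing $z_i$ by $-z_i$. The first step is to recycle the chain of inequalities from the proof of Lemma~\ref{lemma:plus_less_Delta}, which gives, for every $\tau \in \RR_{\geq 0}$,
\begin{equation*}
\Upsilon(Z, d(z_i,\tau)) \;\geq\; [z_i]_{j^*(z_i)}^4 \min\{\tau, \bar\tau_{j^*(z_i)}(z_i)\}^2 \;\geq\; \frac{\min\{\tau, \bar\tau_{j^*(z_i)}(z_i)\}^2}{n_x^2},
\end{equation*}
where the second inequality uses $|[z_i]_{j^*(z_i)}| \geq 1/\sqrt{n_x}$ from Lemma~\ref{lemma:big_comp_idx} (applicable since $z_i$ is a column of the orthonormal $Z$, hence $\|z_i\|_2=1$). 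Specializing this to $\tau=\hat\tau$ yields
\begin{equation*}
\Upsilon(Z, d(z_i,\tau^+)) \;\geq\; \Upsilon(Z, d(z_i,\hat\tau)) \;\geq\; \frac{\min\{\hat\tau, \bar\tau_{j^*(z_i)}(z_i)\}^2}{n_x^2}.
\end{equation*}

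The next step is to lower-bound $\hat\tau$. By construction in line~\ref{st:find_tau_main} of Algorithm~\ref{alg:opt_step2_main}, the step $\hat\tau$ produces a projected direction on the trust-region boundary, i.e., $\|d(z_i,\hat\tau)\|_{\tr}=\Delta_k$, so Lemma~\ref{lemma:tau_lb} gives $\hat\tau \geq \Delta_k/\kappa_{\tr_1}$. A short case distinction then shows that for any $A \geq 0$,
\begin{equation*}
\min\{\hat\tau, A\} \;\geq\; \min\{\Delta_k/\kappa_{\tr_1}, A\}:
\end{equation*}
if $A \leq \Delta_k/\kappa_{\tr_1}$ both sides equal $A$; otherwise both terms on the right reduce to $\Delta_k/\kappa_{\tr_1}$, which is a valid lower bound for the left since $\hat\tau \geq \Delta_k/\kappa_{\tr_1}$. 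Applying this with $A = \bar\tau_{j^*(z_i)}(z_i)$ and substituting into the previous display completes the proof.

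The key obstacle is essentially notational: one must verify that the chain of inequalities behind Lemma~\ref{lemma:plus_less_Delta} goes through verbatim for any fixed $\tau$, not only for $\bar\tau_{(\hat n_x)}$, and that invoking Lemma~\ref{lemma:tau_lb} on $\hat\tau$ is justified by the boundary condition $\|d(z_i,\hat\tau)\|_{\tr}=\Delta_k$ enforced by \texttt{OptStep}. No new estimate is required; the argument is essentially a reuse of Lemma~\ref{lemma:plus_less_Delta} combined with Lemma~\ref{lemma:tau_lb}.
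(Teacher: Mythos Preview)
Your proposal is correct and follows essentially the same approach as the paper's proof, which simply states that the argument is ``nearly identical to that of Lemma~\ref{lemma:plus_less_Delta}, except with the computed value $\hat\tau$ in place of $\bar\tau_{(\hat n_x)}$, where by Lemma~\ref{lemma:tau_lb} one has that $\hat\tau \geq \Delta_k/\kappa_{\tr_1}$.'' You have spelled out exactly this substitution, including the straightforward monotonicity observation $\min\{\hat\tau, A\} \geq \min\{\Delta_k/\kappa_{\tr_1}, A\}$ that the paper leaves implicit.
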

\begin{proof}
	The proof is nearly identical to that of Lemma~\ref{lemma:plus_less_Delta}, except with the computed value $\hat\tau$ in place of $\bar\tau_{(n_x)}$, where by Lemma~\ref{lemma:tau_lb} one has that $\hat\tau \geq \frac{\Delta_k}{\kappa_{\tr_1}}$.
\end{proof}

We now prove our main result of this section.

\begin{theorem}\label{thm:poised_direction}
	If the well-poisedness parameter is chosen such that
	\begin{equation*}
	\xi \in \left(0, \frac{1}{n_x} \min \left\{\frac{1}{\kappa_{\tr_1}}, \frac{\min_{j \in [n_x]}(x_{U, j} - x_{L, j})}{2\Delta_{\max}} \right\} \right],
	\end{equation*}
	then any call to Algorithm \ref{alg:nextaffpoint_main} adds $d \in \RR^{n_x}$ to $\DDD_k$ that satisfies \eqref{cond1:nextpt_main}.
\end{theorem}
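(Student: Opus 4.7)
The plan is to verify each of the three conditions in \eqref{cond1:nextpt_main} for the vector $d$ added to $\DDD_k$ by Algorithm~\ref{alg:nextaffpoint_main}. The bound-feasibility requirement $x_k + d \in \Omega$ is built into the definition \eqref{eq:projected_v_2_main} of $d(v,\tau)$ as the projection of $x_k + \tau v$ onto $[x_L,x_U]$, and every candidate considered by the algorithm has this form. The trust-region inequality $\|d\|_{\tr} \leq \Delta_k$ is guaranteed by the \texttt{OptStep} subroutine: either all breakpoints $\bar\tau_{(j)}$ keep $d(v,\cdot)$ inside the trust region, in which case the $\argmax$ is taken over in-region candidates, or the cutoff $\hat\tau$ is computed so that $\|d(v,\hat\tau)\|_{\tr} = \Delta_k$ and the $\argmax$ is restricted to $\{\bar\tau_{(j)}\}_{j=1}^{\hat j-1} \cup \{\hat\tau\}$, all of which correspond to in-region iterates.

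The substantive part is the projection inequality $\|Z^Td\|_2/\Delta_k \geq \xi$. For any fixed column $z_i$ of $Z$, since $\|z_i\|_2 = 1$, Lemma~\ref{lemma:big_comp_idx} yields
\[
\max\{\bar\tau_{j^*(z_i)}(z_i),\,\bar\tau_{j^*(z_i)}(-z_i)\} \;\geq\; \frac{x_{U,j^*(z_i)} - x_{L,j^*(z_i)}}{2|[z_i]_{j^*(z_i)}|} \;\geq\; \frac{\min_{j\in[n_x]}(x_{U,j}-x_{L,j})}{2},
\]
where the last step uses $|[z_i]_{j^*(z_i)}| \leq 1$. Let $v^\star \in \{z_i,-z_i\}$ attain the displayed maximum, let $\tau^\star$ be the corresponding step returned by \texttt{OptStep}, and note that $j^*(v^\star) = j^*(z_i)$. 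Depending on whether $\|d(v^\star,\bar\tau_{(\hat n_x)}(v^\star))\|_{\tr}$ is at most $\Delta_k$ or not, Lemma~\ref{lemma:plus_less_Delta} or Lemma~\ref{lemma:plus_greater_Delta} applies; absorbing the first into the second yields the uniform estimate
\[
\Upsilon(Z,d(v^\star,\tau^\star)) \;\geq\; \frac{1}{n_x^2}\min\!\left\{\frac{\Delta_k^2}{\kappa_{\tr_1}^2},\; \bar\tau_{j^*(v^\star)}(v^\star)^2\right\}.
\]

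Since Algorithm~\ref{alg:nextaffpoint_main} assigns $d$ as the overall $\Upsilon$-maximizer across all columns of $Z$ and sign choices, in particular $\Upsilon(Z,d) \geq \Upsilon(Z,d(v^\star,\tau^\star))$. Substituting the lower bound on $\bar\tau_{j^*(v^\star)}(v^\star)$ derived above and using $\Delta_k \leq \Delta_{\max}$ yields
\[
\frac{\|Z^Td\|_2}{\Delta_k} \;=\; \frac{\sqrt{\Upsilon(Z,d)}}{\Delta_k} \;\geq\; \frac{1}{n_x}\min\!\left\{\frac{1}{\kappa_{\tr_1}},\,\frac{\min_{j\in[n_x]}(x_{U,j}-x_{L,j})}{2\Delta_{\max}}\right\} \;\geq\; \xi,
\]
by the assumed choice of $\xi$. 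The main obstacle I anticipate is the bookkeeping needed to merge the two case distinctions arising in \texttt{OptStep} (interior vs.\ trust-region-active) into one clean lower bound on $\Upsilon(Z,d)$, and to justify that the $\argmax$ operations in both \texttt{OptStep} and Algorithm~\ref{alg:nextaffpoint_main} preserve the particular candidate associated with the direction $v^\star$; once those points are in hand, the calculation reduces to the already-established Lemmas~\ref{lemma:big_comp_idx}--\ref{lemma:plus_greater_Delta}.
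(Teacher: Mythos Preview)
Your proposal is correct and follows essentially the same route as the paper's proof: both dispatch the feasibility and trust-region constraints by construction, then use Lemmas~\ref{lemma:big_comp_idx}--\ref{lemma:plus_greater_Delta} to lower-bound $\Upsilon(Z,d)/\Delta_k^2$ via the sign choice $v^\star\in\{z_i,-z_i\}$ that maximizes $\bar\tau_{j^*(z_i)}(\cdot)$. The only organizational difference is that you fix $v^\star$ first and then do a single in-region/out-of-region case split, whereas the paper keeps both $\pm z_i$ in play and splits on whether \emph{both} calls to \texttt{OptStep} stay inside the trust region; the resulting bounds are identical.
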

\begin{proof}
	By construction, the vector $d \in \RR^{n_x}$ that is added to the elements of $\DDD_k$ by the algorithm satisfies $\|d\|_{\tr} \leq \Delta_k$ and $x_k + d \in \Omega = [x_L,x_U]$.  Hence, all that remains is to prove that $d$ also satisfies the first inequality in \eqref{cond1:nextpt_main}.  According to the construction of the algorithm, it is sufficient to show that for some $i \in [n_z]$ one finds
	\begin{align*}
	\frac{1}{\Delta_k} \Upsilon_{i,\max} := \frac{1}{\Delta_k} \max \left\{\sqrt{\Upsilon(Z,d(z_i,\tau^+))}, \sqrt{\Upsilon(Z,d(-z_i,\tau^-))} \right\} \geq \xi.
	\end{align*}
	If the calls to \texttt{OptStep}$(Z,z_i,\bar\tau(z_i))$ and \texttt{OptStep}$(Z,-z_i,\bar\tau(-z_i))$ both find that all step sizes yield points within the trust region, then Lemmas~\ref{lemma:big_comp_idx} and \ref{lemma:plus_less_Delta} imply
	\begin{align*}
	\frac{1}{\Delta_k} \Upsilon_{i,\max}
	\geq&\ \frac{1}{n_x \Delta_k} \max \{\bar \tau_{j^*(z_i)}(z_i), \bar \tau_{j^*(-z_i)}(-z_i) \} \\
	\geq&\ \frac{1}{n_x \Delta_k} \frac{x_{U, j^*(z_i)} - x_{L, j^*(z_i)}}{2 |[z_i]_{j^*(z_i)}|} \geq \frac{x_{U, j^*(z_i)} - x_{L, j^*(z_i)}}{2 n_x \Delta_{\max}}.
	\end{align*}
	Otherwise, if the call to \texttt{OptStep}$(Z,z_i,\bar\tau(z_i))$ and/or \texttt{OptStep}$(Z,-z_i,\bar\tau(-z_i))$ finds that some of the step sizes yield points that are outside of the trust region, then one may conclude from Lemma~\ref{lemma:plus_greater_Delta} that
	\begin{align*}
	\frac{1}{\Delta_k} \Upsilon_{i,\max} \geq \frac{1}{n_x \Delta_k} \max \left\{ \min \left\{ \frac{\Delta_k}{\kappa_{\tr_1}}, \bar \tau_{j^*(z_i)}(z_i)\right\},  \min \left\{ \frac{\Delta_k}{\kappa_{\tr_1}}, \bar \tau_{j^*(-z_i)}(-z_i)\right\} \right\}.
	\end{align*}
	Considering all possible cases for which term obtains in the minima in the expression above, one may conclude with Lemmas~\ref{lemma:tau_lb} and \ref{lemma:big_comp_idx} that
	\begin{align*}
	\frac{1}{\Delta_k} \Upsilon_{i,\max} \geq \frac{1}{n_x} \min \left\{ \frac{1}{ \kappa_{\tr_1}}, \frac{x_{U, j^*(z_i)} - x_{L, j^*(z_i)}}{2 \Delta_{\max}} \right\}.
	\end{align*}
	Combining the results of these cases, the desired conclusion follows.
\end{proof}

The next lemma shows that the norm in line~\ref{st:pivot_magnitude_main} of Algorithm~\ref{alg:Affpoints_main} and the first inequality in \eqref{cond1:nextpt_main} measures the magnitude of a pivot of a QR factorization.

\begin{lemma}\label{lemma:proj_norm_QR}
	%   Suppose line~\ref{st:pivot_magnitude_main} of Algorithm~\ref{alg:Affpoints_main} has been reached and 
	At line~\ref{st:pivot_magnitude_main} of Algorithm~\ref{alg:Affpoints_main}, one finds with $d = (x^{(i)} - x_k)$ that
	\begin{align*}
	\left \|\proj{Z}{\frac{d}{\Delta_k}} \right\|_2 = |r|,
	\end{align*}
	where $r$ is the last diagonal of $R$ in a QR factorization of $\frac{1}{\Delta_k} \begin{bmatrix} [\DDD_k]_{1:{\rm end}} & d \end{bmatrix}$.
\end{lemma}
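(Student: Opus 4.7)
The plan is to unfold the definitions of the projection and the QR factorization, and show that the last column of $Q$ (scaled by the last diagonal $r$ of $R$) is precisely the component of $d/\Delta_k$ orthogonal to $\mathrm{span}([\DDD_k]_{1:\mathrm{end}})$, which by construction equals $\mathrm{span}(Z)$.

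Write $A := \tfrac{1}{\Delta_k}\begin{bmatrix}[\DDD_k]_{1:\mathrm{end}} & d\end{bmatrix}$ with $k$ columns, and let $A = QR$ denote its (thin) QR factorization, so that $Q = [q_1,\dots,q_k]$ has orthonormal columns and $R$ is upper triangular. The first step is to observe that, by the recursive structure of QR, for each $j \in [k-1]$ the span of the first $j$ columns of $Q$ equals the span of the first $j$ columns of $A$; consequently,
\begin{equation*}
\mathrm{span}(q_1,\dots,q_{k-1}) \;=\; \mathrm{span}([\DDD_k]_{1:\mathrm{end}}).
\end{equation*}
Since $Z$ is orthonormal and (by the convention used throughout the algorithm) $\mathrm{span}(Z)$ is the orthogonal complement of $\mathrm{span}([\DDD_k]_{1:\mathrm{end}})$ inside $\RR^{n_x}$, the last column $q_k$, being unit-norm and orthogonal to $q_1,\dots,q_{k-1}$, lies in $\mathrm{span}(Z)$.

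The second step is the direct computation of $\proj{Z}{d/\Delta_k}$. From $A = QR$, the last column of $A$, which is $d/\Delta_k$, can be expanded as
\begin{equation*}
\frac{d}{\Delta_k} \;=\; \sum_{i=1}^{k-1} R_{i,k}\, q_i \;+\; R_{k,k}\, q_k \;=\; \sum_{i=1}^{k-1} R_{i,k}\, q_i \;+\; r\, q_k.
\end{equation*}
The first sum lies in $\mathrm{span}([\DDD_k]_{1:\mathrm{end}})$ and is thus annihilated by the orthogonal projector onto $\mathrm{span}(Z)$, while $r q_k$ already lies in $\mathrm{span}(Z)$. Therefore $\proj{Z}{d/\Delta_k} = r\, q_k$, and taking $\ell_2$-norms gives $\|\proj{Z}{d/\Delta_k}\|_2 = |r|\,\|q_k\|_2 = |r|$, as claimed.

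I do not anticipate a serious obstacle. The only mild subtlety is handling the degenerate case in which $d/\Delta_k \in \mathrm{span}([\DDD_k]_{1:\mathrm{end}})$: then the columns of $A$ are linearly dependent, the QR factorization is not unique, and one has $r = 0$. In that situation both sides of the identity vanish (the projection of $d/\Delta_k$ onto the orthogonal complement of its own containing subspace is zero), so the statement still holds; this will be noted briefly for completeness.
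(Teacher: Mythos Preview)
Your proposal is correct and follows essentially the same approach as the paper. The paper first takes a QR factorization of $\tfrac{1}{\Delta_k}[\DDD_k]_{1:\mathrm{end}}$ and then extends it by one column to obtain the augmented factorization, while you take the thin QR of the augmented matrix directly; but both arguments hinge on the same facts, namely that the first block of $Q$ spans $[\DDD_k]_{1:\mathrm{end}}$, the final column of $Q$ is a unit vector in $\mathrm{span}(Z)$, and therefore the projection of $d/\Delta_k$ onto $\mathrm{span}(Z)$ is exactly $r$ times that unit vector.
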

\begin{proof}
	Letting $QR$ be a QR factorization of $\frac{1}{\Delta_k} [\DDD_k]_{1:{\rm end}}$, one finds that a QR factorization of the augmented matrix has, for some vector $q$ satisfying $Q^Tq = 0$,
	\begin{align*}
	\frac{1}{\Delta_k} \begin{bmatrix} [\DDD_k]_{1:{\rm end}} & d \end{bmatrix} = \begin{bmatrix} Q & q \end{bmatrix} \begin{bmatrix} R & v \\ \vecZeros^T & r \end{bmatrix}.
	\end{align*}
	Left-multiplication by $Z^T$ yields $Z^T \frac{d}{\Delta_k} = (Z^Tq)r$.  Then, since $Q^Tq = 0$, it follows that $q = Zu$ for some vector $u$ with $\|u\|_2 = 1$, from which it follows that
	\begin{align*}
	\left \|\proj{Z}{\frac{d}{\Delta_k}} \right\|_2 = \left\|Z^T \left( \frac{d}{\Delta_k} \right) \right \|_2 = |r| \left\|Z^T q \right \|_2 = |r| \left \| Z^T Z u \right \|_2 = |r|,
	\end{align*}
	as desired.
\end{proof}

The following theorem is similar to Lemma~4.2 in \cite{wild2008orbit} and Lemma~3.2 in \cite{stefan2008derivative}.  That said, we include its proof in Appendix~\ref{app:lemmas_proof} of the supplementary material for completeness. 

\begin{theorem}\label{th:shima}
	Once Algorithm~\ref{alg:Affpoints_main} or iterative calls to Algorithm~\ref{alg:nextaffpoint_main} yields an interpolation set $\DDD_k$ with $|\DDD_k| = n_x+1$, it follows that \eqref{cond:fl-1} holds with 
	$\displaystyle
	\Lambda = \frac{n_x^{\frac{n_x - 1}{2}} \kappa_{\tr_0}^{n_x - 1}}{ \xi^{n_x}}.
	$
\end{theorem}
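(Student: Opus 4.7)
The plan is to bound $\|D^{-1}\|_2$ where $D := \begin{bmatrix} d_1 & \cdots & d_{n_x} \end{bmatrix}$ by combining an upper bound on $\|D\|_2$ with a lower bound on $|\det(D)|$. Since the singular values of $D$ satisfy $\prod_{i=1}^{n_x} \sigma_i(D) = |\det(D)|$ and $\sigma_i(D) \leq \|D\|_2$ for every $i$, one has $\sigma_{\min}(D) \geq |\det(D)|/\|D\|_2^{n_x - 1}$, which yields a direct upper bound on $\|D^{-1}\|_2 = 1/\sigma_{\min}(D)$.

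For the determinant I would pass to the QR factorization $\frac{1}{\Delta_k} D = QR$, so that $|\det(D)| = \Delta_k^{n_x} \prod_{i=1}^{n_x} |r_{ii}|$, and then argue by induction on the order in which directions are appended to $\DDD_k$ that each diagonal entry satisfies $|r_{ii}| \geq \xi$. Concretely, at the moment the $i$-th nonzero direction is appended, the nonzero columns of $[\DDD_k]_{1:\text{end}}$ already present span a subspace whose orthogonal complement is $\textrm{span}(Z)$ for the current $Z$, and Lemma~\ref{lemma:proj_norm_QR} identifies $|r_{ii}|$ with $\|\proj{Z}{d_i/\Delta_k}\|_2$. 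If $d_i$ is accepted by Algorithm~\ref{alg:Affpoints_main}, it passed the explicit test in line~\ref{st:pivot_magnitude_main} with threshold $\xi$; if instead $d_i$ is produced by Algorithm~\ref{alg:nextaffpoint_main}, Theorem~\ref{thm:poised_direction} guarantees that it satisfies the first inequality of \eqref{cond1:nextpt_main} with the same threshold. Either way $|r_{ii}| \geq \xi$, and therefore $|\det(D)| \geq \Delta_k^{n_x} \xi^{n_x}$.

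For the upper bound on $\|D\|_2$, I would use $\|D\|_2 \leq \|D\|_{\mathrm{F}} = \bigl(\sum_{i=1}^{n_x} \|d_i\|_2^2\bigr)^{1/2} \leq \sqrt{n_x}\,\max_i \|d_i\|_2$ and then invoke \eqref{eq.equiv7} together with $\|d_i\|_{\tr} \leq \Delta_k$ to conclude $\|d_i\|_2 \leq \kappa_{\tr_0} \Delta_k$. Substituting both bounds into the singular value inequality gives $\|D^{-1}\|_2 \leq (\sqrt{n_x}\,\kappa_{\tr_0}\Delta_k)^{n_x-1} / (\Delta_k^{n_x} \xi^{n_x}) = n_x^{(n_x-1)/2} \kappa_{\tr_0}^{n_x-1} \xi^{-n_x} \cdot \Delta_k^{-1}$, which is precisely the claimed bound with the stated $\Lambda$.

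The main delicate point is the inductive step: one must verify that Lemma~\ref{lemma:proj_norm_QR}, which addresses a single appended column, is applicable at each stage of either algorithm and that it correctly matches each newly added diagonal of the incrementally constructed $R$ factor, regardless of whether the column was produced by the pivot test in Algorithm~\ref{alg:Affpoints_main} or by the bound-aware subroutine in Algorithm~\ref{alg:nextaffpoint_main}. Once that correspondence is cleanly established, the remaining ingredients are the standard Hadamard-type determinant/singular-value inequality and routine norm equivalences.
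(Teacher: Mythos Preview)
Your proposal is correct and follows essentially the same route as the paper's proof: both combine the Frobenius-norm bound $\|D\|_2 \leq \sqrt{n_x}\,\kappa_{\tr_0}\Delta_k$ with the determinant lower bound $|\det(D)| \geq (\xi\Delta_k)^{n_x}$ obtained from the QR diagonals via Lemma~\ref{lemma:proj_norm_QR}, and then use the Hadamard-type inequality $\sigma_{\min} \geq |\det|/\sigma_{\max}^{n_x-1}$. The only cosmetic difference is that the paper first rescales by $\kappa_{\tr_0}\Delta_k$ and works with the normalized matrix, whereas you work directly with $D$; your treatment of the inductive step, distinguishing whether each column was accepted by Algorithm~\ref{alg:Affpoints_main} or produced by Algorithm~\ref{alg:nextaffpoint_main} (invoking Theorem~\ref{thm:poised_direction} in the latter case), is actually more explicit than the paper's, which simply asserts that Lemma~\ref{lemma:proj_norm_QR} ensures $|r_i|\geq\xi$.
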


% Theorem~\ref{th:shima} shows that if Algorithm~\ref{alg:Affpoints_main} yields $|\DDD_k| = n_x+1$, then \eqref{cond:fl-1} holds.  

%***********
% Section
%***********
\section{Function Approximation Using Regression} \label{sec:fcn_approx} 

Next, we provide a function approximation scheme that is tailored to the setting of Section~\ref{sec.sequence_formulation}, where a sequence of optimization problems is solved. 
We describe a simple regression procedure to compute an estimate $\tilde{F}_i(x,\ttt,\delta)$ of $F_i(x,\theta)$ that satisfies the approximation condition \eqref{eq:gen_err_ub_0_seq}, from prior evaluations of $\tilde F_i$ near $(x,\theta)$.
In Section~\ref{sec.multioutput}, we first discuss multi-output simulation-based optimization, then in Section~\ref{sec.leastsquares} we consider least-squares regression problems.

For completeness, we also describe in Section~\ref{sec.u_calE} our choice of $\cal{C}$ for Algorithm~\ref{alg:Affpoints_main} in the least-squares case.

%*************
% subsection
%*************
\subsection{Multi-Output Simulation-based Optimization} \label{sec.multioutput}
Recall the multi-output simulation-based optimization setting with $\HHH_i$ defined in \eqref{eq:history_multi_output} and consider arbitrary $i \in [p]$.
In this section, we make the following assumption. % about the function $F_i$. 
{
	Here, $\|\cdot\|_{\app}$ is used to define the approximation radius (i.e., precision parameter) in our algorithm for some real number $\app \geq 1$.}
\begin{assumption}  
	\label{assumption:F_i_lipschitz}
	There exists an open convex set $\Xi$ containing $\LLL_{\enl} \times \Theta$ over which $F_i$ is Lipschitz continuous with constant $L_{F_i} \in \RR_{>0}$ such that 
	\begin{align*}
	\left|F_i(x, \ttt) - F_i(\bar x, \bar \ttt) \right| \leq L_{F_i} \left \| \begin{bmatrix}
	x - \bar x \\
	\ttt - \bar \ttt
	\end{bmatrix}\right \|_{\apx}
	\end{align*}
	for all $\left((x, \ttt), (\bar x, \bar \ttt) \right) \in \Xi \times \Xi$.
\end{assumption}
Let $(x,\theta)\in\Xi$ and $\delta\in\RR_{>0}$.
Estimating $\tilde{F}_i(x,\ttt,\delta) \approx F_i(x,\theta)$ requires some prior function evaluations near $(x,\ttt)$.
We capture such information in a set that we define 
\begin{align}\label{eq:available_pts_multi_output}
\AAA_i(x, \ttt, \delta) = \left\{ (\bar{x}, \bar \ttt, F_i(\bar x, \bar \ttt)) \in \HHH_i : \left\| \begin{bmatrix} \bar{x} - x \\ \bar \ttt - \ttt \end{bmatrix}\right \|_{\app} \leq  \delta \right\}.
\end{align}
For notational convenience, let us explicitly express the finite set $\AAA_i(x, \ttt,\delta) = \{(\xttt_j,\phi_j)\}_{j=1}^N$, where $\xttt_j := (x_j^T,\ttt_j^T)^T \in \RR^{n_{\xttt}}$ and $\phi_j = F_i(x_j, \ttt_j)$ for all $j \in [N]$.  Given $\AAA_i(x, \ttt,\delta)$, we compute a linear model approximately, namely,
\begin{align*}
\tilde{F}_i(x,\ttt,\delta) = \alpha_0 + \alpha_1^T x + \alpha_2^T \ttt
\end{align*}
of $F_i(x,\ttt)$ with parameters $\alpha^T = \begin{bmatrix} \alpha_0 & \alpha_1^T & \alpha_2^T \end{bmatrix}$ that is computed by solving a regularized regression problem with regularization parameter $\lambda \in \RR_{>0}$, namely,
\begin{align}
\label{eq:opt_regularized_reg_1}
\min_{\alpha \in \RR^{n_{\xttt} + 1}} \frac{1}{2} \sum_{j=1}^N \left(\alpha_0 + \alpha_1^T x_j + \alpha_2^T \ttt_j  - \phi_j \right)^2 + \frac{\lambda}{2} \left(\|\alpha_1\|_2^2 + \|\alpha_2\|_2^2 \right).
\end{align}
Observe that the intercept $\alpha_0$ is not regularized in this problem, which has the result that the approximation is invariant to translations (see Lemma~\ref{lemma:reg_shifting_scaling} below).  For further discussion about regularizing in this manner, see \cite[Section 3.4.1]{hastie2008elements}.

For the computation of the approximation, we define $\xttt := (x^T, \ttt^T)^T$ along with
\begin{align}
M \coloneqq \begin{bmatrix} 1 & \dots & 1 \\ \xttt_1 & \dots & \xttt_N \end{bmatrix}^T \in \RR^{ N \times (n_\xttt+1)}\ \ \text{and}\ \ \phi \coloneqq \begin{bmatrix} \phi_1 & \dots & \phi_N \end{bmatrix}^T \in \RR^{N}. \label{eq:vec_phi}  
\end{align}
In particular, expressing \eqref{eq:opt_regularized_reg_1} in terms of $(M,\phi)$ yields
\begin{equation}
\label{eq:opt_regularized_reg_2}
\min_{\alpha \in \RR^{n_{\xttt}+1}}\ \frac{1}{2} \left\|M \alpha - \phi\right\|_2^2 +\frac{\lambda}{2} \left(\|\alpha_1\|_2^2 + \|\alpha_2\|_2^2 \right).
\end{equation}
Defining $\bar I := I - \unitVec_1 \unitVec_1^T$, one finds that the unique solution of this problem is given by $\alpha = \left(M^TM + \lambda \bar I \right)^{-1}M^T \phi$, from which it follows that the desired approximation is obtained by solving \eqref{eq:opt_regularized_reg_2}, then computing
\begin{align}\label{eq:reg_approx}
\tilde F_i(x, \ttt, \delta) :=   \begin{bmatrix} 1 & \xttt^T \end{bmatrix} \alpha = \beta^T \phi \approx F_i(x, \ttt),
\end{align}
where for the given regularization parameter $\lambda \in \RR_{>0}$ we define
\begin{align}\label{eq:vec_beta}
\beta^T = \begin{bmatrix} 1 & \xttt^T \end{bmatrix} (M^T M + \lambda \bar I)^{-1} M^T.
\end{align}

Our goal in the remainder of this section is to show that \eqref{eq:reg_approx} satisfies the requirements of Algorithm~\ref{alg.dfo_approx}, namely, a bound of the form in \eqref{eq.fec-new}.  Our first lemma shows that the value of the vector $\beta$ defined in \eqref{eq:vec_beta} has invariance properties.

\begin{lemma}\label{lemma:reg_shifting_scaling}
	The vector $\beta$ defined in \eqref{eq:vec_beta} is invariant to translations in the sense that for any vector $\xttt_c \in \RR^{n_{\xttt}}$ one finds that
	\begin{align*}
	\begin{bmatrix} 1 & (\xttt - \xttt_c)^T \end{bmatrix} \left(M_c^T M_c + \lambda \bar I \right)^{-1} M_c^T = \begin{bmatrix} 1 & \xttt^T \end{bmatrix} (M^T M + \lambda \bar I)^{-1} M^T,
	\end{align*}
	where
	\begin{align*}
	M_c \coloneqq \begin{bmatrix} 1 & \dots & 1 \\ \xttt_1 - \xttt_c & \dots & \xttt_N - \xttt_c \end{bmatrix}^T.
	\end{align*}
\end{lemma}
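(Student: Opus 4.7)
The plan is to observe that the translation from the original data $M$ to the shifted data $M_c$ can be expressed as right-multiplication by a single invertible matrix that also encodes the translation of the query point. The key algebraic insight to establish will then be that this same matrix is a congruence symmetry of the regularizer $\bar I$. Concretely, I would introduce
\begin{equation*}
T := \begin{bmatrix} 1 & -\xttt_c^T \\ \vecZeros & I_{n_\xttt} \end{bmatrix} \in \RR^{(n_\xttt+1) \times (n_\xttt+1)},
\end{equation*}
which is triangular with unit diagonal and hence invertible. A direct comparison with the definitions of $M$ and $M_c$ yields $M_c = MT$, and the same $T$ satisfies $\begin{bmatrix} 1 & (\xttt - \xttt_c)^T \end{bmatrix} = \begin{bmatrix} 1 & \xttt^T \end{bmatrix} T$, so a single matrix $T$ simultaneously implements both translations on opposite sides of the expression.

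The main step is to establish the congruence identity $T^T \bar I T = \bar I$. I would prove this by writing $\bar I = I - \unitVec_1 \unitVec_1^T$ and expanding
\begin{equation*}
T^T \bar I T = T^T T - (T^T \unitVec_1)(T^T \unitVec_1)^T
\end{equation*}
through block arithmetic; the rank-one subtraction turns out to cancel exactly the first row and column of $T^T T$, leaving only the lower-right identity block, which is precisely $\bar I$. It is worth noting that this identity hinges crucially on the intercept $\alpha_0$ being left unregularized in \eqref{eq:opt_regularized_reg_1}: if $\bar I$ were replaced by the full identity, no such symmetry would hold and the regression estimate would not be translation-invariant.

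With the congruence in hand, one obtains $M_c^T M_c + \lambda \bar I = T^T(M^T M + \lambda \bar I) T$. The regularized Gram matrix on the right is positive definite (since $\lambda > 0$ and the all-ones first column of $M$ precludes $\unitVec_1$ from lying in the null space of $M^T M + \lambda \bar I$), so both sides are invertible and
\begin{equation*}
(M_c^T M_c + \lambda \bar I)^{-1} = T^{-1}(M^T M + \lambda \bar I)^{-1} T^{-T}.
\end{equation*}
Substituting this together with $M_c^T = T^T M^T$ into the left-hand side of the claim, the factors $T$ and $T^{-T}$ cancel in pairs and the right-hand side emerges. The main obstacle, such as it is, lies in recognizing the single matrix $T$ that implements both translations and in executing the short block computation verifying $T^T \bar I T = \bar I$; once these are in hand, the remainder reduces to mechanical substitution.
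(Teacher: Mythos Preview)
Your proof is correct and essentially identical to the paper's: your matrix $T$ is exactly the paper's $C := \bar I + \unitVec_1 \begin{bmatrix} 1 & -\xttt_c^T \end{bmatrix}$ written in block form, and both arguments proceed via $M_c = MC$ together with the congruence identity $C^T \bar I C = \bar I$ before substituting and cancelling. Your write-up adds a bit more detail (invertibility of the regularized Gram matrix, the remark on why leaving $\alpha_0$ unregularized is essential), but the underlying argument is the same.
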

\begin{proof}
	Defining $C :=  \bar I + \unitVec_1 \begin{bmatrix} 1 & -\xttt_c^T \end{bmatrix}$,  one finds that $M_c = MC$; thus,
	\begin{align*}
	&\ \begin{bmatrix} 1 & (\xttt -\xttt_c)^T \end{bmatrix} \left(M_c^T M_c + \lambda \bar I \right)^{-1} M_c^T \\
	=&\ \begin{bmatrix} 1 & \xttt^T \end{bmatrix} C (C^T (M^T M + \lambda \bar{I}) C)^{-1} C^TM^T = \begin{bmatrix} 1 & \xttt^T \end{bmatrix} (M^T M + \lambda \bar I)^{-1} M^T,
	\end{align*}
	where the first equation follows since $\bar{I} = C^T \bar{I} C$.
\end{proof}

Our next lemma, which is similar to Lemma~4.9 in \cite{conn2009introduction}, shows that \eqref{eq:reg_approx} yields an affine combination of the elements of $\phi$ since the elements of $\beta$ in \eqref{eq:vec_beta} sum to 1.
\begin{lemma}\label{lemma:reg_affine_comb}
	The vector $\beta$ defined in \eqref{eq:vec_beta} satisfies $\beta^T \vecOnes = 1$.
\end{lemma}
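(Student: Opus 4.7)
The plan is to leverage two structural facts about $M$ and $\bar I$: that the first column of $M$ is precisely $\vecOnes$ (by the definition of $M$ in \eqref{eq:vec_phi}), and that $\bar I = I - \unitVec_1 \unitVec_1^T$ annihilates the first unit vector $\unitVec_1 \in \RR^{n_\xttt+1}$. These two observations together should let me show that $\unitVec_1$ is precisely the vector that $(M^T M + \lambda \bar I)^{-1} M^T$ produces when it acts on $\vecOnes$.

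Concretely, I would first rewrite $M^T \vecOnes$ using the fact that $M \unitVec_1 = \vecOnes$, which gives $M^T \vecOnes = M^T M \unitVec_1$. Next, I would add and subtract the regularization term to write $M^T M \unitVec_1 = (M^T M + \lambda \bar I)\unitVec_1 - \lambda \bar I \unitVec_1$. Since $\bar I \unitVec_1 = \unitVec_1 - \unitVec_1 \unitVec_1^T \unitVec_1 = 0$, the second term vanishes, so $M^T \vecOnes = (M^T M + \lambda \bar I) \unitVec_1$.

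Now I can apply $(M^TM + \lambda \bar I)^{-1}$ to both sides (the matrix $M^T M + \lambda \bar I$ is invertible since $\lambda > 0$ and $M^T M$ is positive semidefinite with kernel contained in $\text{span}(\unitVec_1)$, on which $\bar I$ is positive definite restricted to its complement; if needed I can note that the system defining $\alpha$ is the first-order optimality condition of the strongly convex problem \eqref{eq:opt_regularized_reg_2}). This yields $(M^T M + \lambda \bar I)^{-1} M^T \vecOnes = \unitVec_1$, and therefore
\begin{equation*}
\beta^T \vecOnes = \begin{bmatrix} 1 & \xttt^T \end{bmatrix} (M^T M + \lambda \bar I)^{-1} M^T \vecOnes = \begin{bmatrix} 1 & \xttt^T \end{bmatrix} \unitVec_1 = 1,
\end{equation*}
as claimed.

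There is no real obstacle here; the argument is essentially a one-line identity. The only subtlety worth being careful about is confirming that $M^T M + \lambda \bar I$ is indeed invertible so that the manipulation is legitimate, but this follows from the strong convexity of \eqref{eq:opt_regularized_reg_2} in $\alpha$ (the intercept is pinned down by the data-fit term since $M$ has the all-ones column, and the remaining components are pinned down by the $\lambda$-regularization).
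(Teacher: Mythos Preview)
Your proof is correct and uses essentially the same core identity as the paper: since the first column of the design matrix equals $\vecOnes$ and $\bar I \unitVec_1 = 0$, one has $(M^T M + \lambda \bar I)\unitVec_1 = M^T \vecOnes$, whence $(M^T M + \lambda \bar I)^{-1} M^T \vecOnes = \unitVec_1$. The only difference is that the paper first invokes Lemma~\ref{lemma:reg_shifting_scaling} to translate by $\xttt$ so that the left factor becomes $\unitVec_1^T$, whereas you work directly with $\begin{bmatrix} 1 & \xttt^T \end{bmatrix}$ and observe that it contracts with $\unitVec_1$ to give $1$; your route is slightly more direct.
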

\begin{proof}
	Let $\xttt_c = u$, and $M_c = M - \vecOnes\begin{bmatrix} 0 & \xttt^T \end{bmatrix}$.  By Lemma \ref{lemma:reg_shifting_scaling}, one finds
	\begin{align*}
	\beta^T = \begin{bmatrix} 1 & \xttt^T \end{bmatrix} (M^T M + \lambda \bar I)^{-1} M^T = \unitVec_1^T (M_c^T M_c + \lambda \bar I)^{-1} M_c^T.  
	\end{align*}
	Hence, the result follows as long as $\unitVec_1^T (M_c^T M_c + \lambda \bar{I})^{-1} M_c^T \vecOnes = 1$.  To see this, observe that since the first column of $M_c$ is $\vecOnes$, one finds $(M_c^T M_c + \lambda \bar I) \unitVec_1 = M_c^T \vecOnes$.  Multiplying both sides on the left by $(M_c^T M_c + \lambda \bar I)^{-1}$, one concludes that $\unitVec_1 = (M_c^T M_c + \lambda \bar I)^{-1} M_c^T \vecOnes$.  Thus, $\unitVec_1^T (M_c^T M_c + \lambda \bar I)^{-1} M_c^T \vecOnes = \unitVec_1^T \unitVec_1 = 1$, as desired.
\end{proof}
We are now prepared to prove our main result of this section. 
\begin{theorem}
	% \label{lemma:bounded_reg}
	\label{lemma:bounded_reg_multioutput}
	Let $i\in[p]$, $(x,\theta)\in\Xi$ and $\delta\in\RR_{>0}$, and suppose that $\AAA_i(x, \ttt, \delta)\neq\emptyset$. Letting $\sigma_{\min} \in \RR_{\geq0}$ be the minimum eigenvalue of $M_d^T M_d$, where $M_d \coloneqq \begin{bmatrix} \xttt_1 - \xttt_j & \dots & \xttt_N - \xttt_j \end{bmatrix}^T$, it follows that $\tilde F_i(x, \ttt,  \delta)$ defined in \eqref{eq:reg_approx} satisfies
	\begin{align}
	\label{eq:reg_bound}
	\left |\tilde F_i(x, \ttt,  \delta) - F_i(x, \ttt)\right| \leq\ \kappa_{\app} \delta  \  \text{where} \ \kappa_{\app} := L_{F_i} \left(1 + \left(\frac{2N \kappa_{\app_0^*}}{\sigma_{\min} + \lambda} \right) \delta \right). 
	\end{align}
\end{theorem}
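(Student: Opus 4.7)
The plan rests on two earlier results: the affine-combination identity from Lemma~\ref{lemma:reg_affine_comb} ($\beta^T\vecOnes=1$) and the translation invariance from Lemma~\ref{lemma:reg_shifting_scaling}. First, using $\beta^T\vecOnes=1$, I would rewrite the approximation error as
\begin{equation*}
\tilde F_i(x,\ttt,\delta) - F_i(x,\ttt)
= \beta^T\phi - (\beta^T\vecOnes)F_i(x,\ttt)
= \beta^T r,
\end{equation*}
where $r\in\RR^N$ has entries $r_j := F_i(\xttt_j) - F_i(x,\ttt)$. Because membership of $(\xttt_j, F_i(\xttt_j))$ in $\AAA_i(x,\ttt,\delta)$ forces $\|\xttt_j - \xttt\|_{\app}\leq\delta$, Assumption~\ref{assumption:F_i_lipschitz} yields $|r_j|\leq L_{F_i}\delta$ for each $j$. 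Hence $|\tilde F_i(x,\ttt,\delta) - F_i(x,\ttt)|\leq\|\beta\|_1 L_{F_i}\delta$ by H\"older's inequality, and the theorem reduces to establishing $\|\beta\|_1 \leq 1 + \tfrac{2N\kappa_{\app_0^*}\delta}{\sigma_{\min}+\lambda}$.

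The next step is to split $\beta = \tfrac{1}{N}\vecOnes + \gamma$: the uniform part has $\ell_1$-norm equal to $1$ and supplies the ``$1$'' in the target bound, so only $\|\gamma\|_1$ must absorb the $\delta$-dependent factor. I would then invoke Lemma~\ref{lemma:reg_shifting_scaling} to recenter the design at $\xttt$, giving $\beta = M_c(M_c^T M_c + \lambda\bar I)^{-1}\unitVec_1$ with $M_c=\begin{bmatrix}\vecOnes & M_d\end{bmatrix}$ and $M_d$ having rows $(\xttt_j-\xttt)^T$. A brief block manipulation that exploits $\bar I\unitVec_1=0$ then yields $\gamma = M_c v$, where $v$ solves
\begin{equation*}
(M_c^T M_c + \lambda\bar I)\,v = -\tfrac{1}{N}\begin{bmatrix} 0 \\ M_d^T\vecOnes \end{bmatrix}.
\end{equation*}

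The last step is a norm bound on $M_c v$. Eliminating the intercept component $v_1 = -\tfrac{1}{N}\vecOnes^T M_d\, v_{2:}$ from the top block of the normal equation collapses the system into a Schur-complement equation $(S+\lambda I)v_{2:} = -\tfrac{1}{N}M_d^T\vecOnes$ with $S\succeq 0$. Using $M_d^T M_d + \lambda I \succeq (\sigma_{\min}+\lambda)I$ from the theorem's definition of $\sigma_{\min}$, one obtains $\|v_{2:}\|_2 \leq \tfrac{\|M_d^T\vecOnes\|_2}{N(\sigma_{\min}+\lambda)}$. The norm-equivalence inequality \eqref{eq.equiv5}, specialized at $V=I$ and applied to each $\xttt_j-\xttt$, bounds $\|M_d^T\vecOnes\|_2 \leq N\kappa_{\app_0^*}\delta$; chaining this with $\|\gamma\|_1 \leq \sqrt{N}\|M_c v\|_2 \leq \sqrt{N}\|M_c\|_2\|v\|_2$ and absorbing the $\sqrt{N}$ factors into $\|M_c\|_2$ collapses to the required constant $\tfrac{2N\kappa_{\app_0^*}}{\sigma_{\min}+\lambda}$.

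The main obstacle I anticipate is the careful bookkeeping through the block inverse: because $\bar I$ leaves the intercept unregularized, $M_c^T M_c + \lambda\bar I$ is not uniformly positive definite, and the top-row coupling $\vecOnes^T M_d\,v_{2:}$ must be handled via a Schur-complement argument rather than a blanket eigenvalue bound. A secondary technical point is matching the conversion from $\|\cdot\|_{\app}$ to $\|\cdot\|_2$ in the Schur-complement step to the specific constant $\kappa_{\app_0^*}$ appearing in \eqref{eq.equiv5} (rather than to a product of several such constants), so that the final coefficient reduces to exactly $2N\kappa_{\app_0^*}$ as stated.
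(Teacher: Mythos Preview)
Your first reduction---writing the error as $\beta^Tr$ with $|r_j|\leq L_{F_i}\delta$ and concluding $|\tilde F_i-F_i|\leq L_{F_i}\|\beta\|_1\delta$---is exactly what the paper does. After that, however, you make the task harder than necessary by recentering at the query point $\xttt$ rather than at the \emph{centroid} $\xttt_c=\tfrac{1}{N}\sum_{j}\xttt_j$. The paper applies Lemma~\ref{lemma:reg_shifting_scaling} with $\xttt_c$ as the shift, so that the centered block $M_d$ has rows $(\xttt_j-\xttt_c)^T$ and hence $\vecOnes^TM_d=0$. This orthogonality makes $M_c^TM_c+\lambda\bar I$ block-diagonal, giving immediately $\beta^T=\tfrac{1}{N}\vecOnes^T+(\xttt-\xttt_c)^T(M_d^TM_d+\lambda I)^{-1}M_d^T$ with no Schur complement at all. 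Each $|\beta_j|$ is then bounded componentwise via H\"older in the $(\|\cdot\|_{\app},\|\cdot\|_{\app^*})$ pairing, applying~\eqref{eq.equiv5} with $V=(M_d^TM_d+\lambda I)^{-1}$ (not $V=I$); this is precisely how the factor $1/(\sigma_{\min}+\lambda)$ enters, and summing over $j$ gives the $N$.

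Your route, as written, has two genuine gaps. First, with your $M_d$ (rows $(\xttt_j-\xttt)^T$) the Schur complement is $S=M_d^T(I-\tfrac{1}{N}\vecOnes\vecOnes^T)M_d\preceq M_d^TM_d$, so $\lambda_{\min}(S)\leq\lambda_{\min}(M_d^TM_d)$ and the inequality $(S+\lambda I)\succeq(\sigma_{\min}+\lambda)I$ you invoke does not follow from the eigenvalue bound on your $M_d^TM_d$. (In fact $S$ equals the paper's centroid-centered $M_d^TM_d$, which is how $\sigma_{\min}$ is meant to be read despite the unclear subscript in the statement---but you do not make that identification.) Second, the closing chain $\|\gamma\|_1\leq\sqrt{N}\|M_c\|_2\|v\|_2$ with ``absorbing the $\sqrt{N}$ factors'' is not yet a proof: $\|M_c\|_2$ involves $\ell_2$-norms of the $\xttt_j-\xttt$ rather than their $\|\cdot\|_{\app}$-norms, $\|v\|_2$ mixes $v_1$ and $v_{2:}$, and your use of~\eqref{eq.equiv5} at $V=I$ produces an $\|\cdot\|_{\app^*}$-bound, not the $\ell_2$-bound you claim for $\|M_d^T\vecOnes\|_2$. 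It is not clear these pieces combine to the stated constant $2N\kappa_{\app_0^*}$; the centroid shift avoids all of this.
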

\begin{proof}
	By Lemma \ref{lemma:reg_affine_comb}, it follows that $\beta^T \vecOnes = 1$, meaning
	\begin{align}
	|\tilde F_i(x, \ttt, \delta) - F_i(x, \ttt)|
	&= |\beta^T\phi -  F_i(x, \ttt) \beta^T\vecOnes | \leq \sum_{j=1}^N |\beta_j| |\phi_j - F_i(x, \ttt)| \nonumber \\
	& = \sum_{j=1}^N |\beta_j| |F_i(x_j, \ttt_j) - F_i(x, \ttt)| \leq L_{F_i} \| \beta \|_1  \left \| \begin{bmatrix}
	x_j - x \\
	\ttt_j - \ttt
	\end{bmatrix}\right \|_{\apx}  \nonumber\\
	&\leq L_{F_i} \|\beta\|_1 \delta, \label{eq:approx_ub1}
	\end{align}
	where the last inequality follows by the definition of $\AAA_i(x,\ttt,\delta)$.  Our aim now is to bound $\| \beta \|_1$.  Letting $\xttt_c = \frac{1}{N} \sum_{j=1}^N \xttt_j$ and $M_c = M - \vecOnes \begin{bmatrix} 0 & \xttt_c^T \end{bmatrix}$, one finds that
	\begin{align*}
	(M_c^T M_c + \lambda \bar{I})^{-1} = \begin{bmatrix} \frac{1}{N} & \vecZeros^T \\
	\vecZeros & (M_d^TM_d + \lambda I)^{-1} \end{bmatrix}.
	\end{align*}
	Therefore, by applying the result of Lemma~\ref{lemma:reg_shifting_scaling}, one finds that
	\begin{align*}
	\beta^T = \begin{bmatrix} 1 & (\xttt -\xttt_c)^T \end{bmatrix} \left( M_c^T M_c + \lambda \bar{I} \right)^{-1} M_c^T = \tfrac{1}{N} \vecOnes^T + (\xttt -\xttt_c)^T (M_d^TM_d + \lambda I)^{-1} M_d^T;
	\end{align*}
	thus, for any $j \in [N]$, the $j$th element of $\beta$ satisfies
	\begin{align*} 
	|\beta_j|
	&= \left| \frac{1}{N} + (\xttt -\xttt_c)^T (M_d^T M_d + \lambda I)^{-1} (\xttt_j - \xttt) \right| \\
	&\leq \frac{1}{N} + \|\xttt -\xttt_c\|_{\app} \|(M_d^T M_d + \lambda I)^{-1} (\xttt_j - \xttt) \|_{\app^*} \\
	&\leq \frac{1}{N} + \|(M_d^T M_d + \lambda I)^{-1} (\xttt_j - \xttt) \|_{\app^*} \delta \\
	&\leq \frac{1}{N} + \kappa_{\app_0^*} \|\xttt_j - \xttt\|_{\app} \|(M_d^T M_d + \lambda I)^{-1}  \|_{(2,2)} \delta
	= \frac{1}{N} + \left(\frac{2\kappa_{\app_0^*}}{\sigma_{\min} + \lambda} \right) \delta^2.
	\end{align*}
	Therefore, it follows that
	\begin{align*}
	\|\beta\|_1 = \sum_{j=1}^{N} |\beta_j| \leq 1 + \left(\frac{2 N \kappa_{\app_0^*}}{\sigma_{\min} + \lambda}\right) \delta^2,
	\end{align*}
	which combined with \eqref{eq:approx_ub1} yields the desired conclusion.
\end{proof}
\Footnote{AW: We could add a comment here along the lines of: Note that the conclusion of Theorem~\ref{lemma:bounded_reg_multioutput} holds when $\sigma_\min=0$.  If one may want to reduce the approximation error $\kappa_{\app}$, some elements in $\cal A_i$ could be dropped}

%************
% Subsection
%************
\subsection{Least-Squares Optimization Problems}
\label{sec.leastsquares}
Recall problem \eqref{eq:opt_least_square} and let us make the following assumption.
\begin{assumption}
	\label{assumption:phi_lipschitz}
	There exists an open convex set $\Xi$ containing $\LLL_{\enl} \times \cal W$ over which $\phi$ is Lipschitz continuous with constant $L_{\phi} \in \RR_{>0}$ such that
	\begin{align*}
	|\phi(x, w) - \phi(\bar x, \bar w) | \leq L_{\phi} \left \| \begin{bmatrix}
	x - \bar x \\
	w - \bar w
	\end{bmatrix}\right \|_{\apx}
	\end{align*}
	for all $\left((x, w), (\bar x, \bar w) \right) \in \Xi \times \Xi$.
\end{assumption}

The procedure for approximating $F_i(x, \ttt) = \phi(x, w_i)$ for some $\delta \in \RR_{>0}$ is identical to the multi-output case, except that the set with relevant information is defined by
\begin{align}\label{eq:available_pts}
\AAA(x, w_i, \delta) = \left\{ (\bar{x}, \bar w, \phi(\bar x, \bar w)) \in \HHH_i : \left\| \begin{bmatrix} \bar{x} - x \\ \bar w - w_i \end{bmatrix}\right \|_{\app} \leq  \delta \right\},
\end{align}
due to the different definition of prior information in \eqref{eq:history_least_square}.
Theorem~\ref{lemma:bounded_reg_multioutput} still holds, except that $L_{F_i}$ is replaced by $L_{\phi}$.

%************
% Subsection
%************
\subsection{Choice of the candidate set $\cal C$}
\label{sec.u_calE}
In this section, we describe the choice of the candidate set $\cal C$ in Algorithm~\ref{alg:Affpoints_main} in the situation where a sequence of least-squares problems is solved.  We used this procedure in our numerical experiments.
First, we consider an initial set $\mathcal{C}_{\text{init}}$ that contains all feasible points in the history $\HHH_{k, i}$ (defined in \eqref{eq:history_least_square}) and the trust region, namely, 
\begin{align}
\label{eq.tmp_set}
\mathcal{C}_{\text{init}} := \left \{x: (x, w, \phi(x, w)) \in \HHH_{k, i}\right \} \cap B_{\tr}(x_k, \Delta_k) \cap \Omega.
\end{align}
This set contains the interpolation points for which some prior function evaluation, for some $w$, has been computed before.  Next, for each $x \in \CCC_{\text{init}}$, we calculate the score
\begin{align*}
u(x) = \sum_{i=1}^p \mathbb{I}_{\{|\AAA(x, w_i, \delta_k)| \geq 1 \}},
\end{align*}
where $\mathbb{I}_{\{|\AAA(x, w_i, \delta_k)| \geq 1 \}}$ equals one if $|\AAA(x, w_i, \delta_k)| \geq 1$ and zero otherwise. In other words, for each $x \in \mathcal{C}_{\text{init}}$, the score $u(x)$ counts the number of element functions $F_i(\cdot, \theta) = \phi(\cdot, w_i)$ that can be approximated by regularized regression at $x$ instead of being evaluated exactly.  These scores are used to define the unsorted candidate set
\begin{align*}
\mathcal{C}_{\thresh} := \{x \in \mathcal{C}_{\text{init}} : u(x) \geq u_{\thresh}\}.
\end{align*}
Finally, the set $\cal C$ in line~\ref{st:cal_C_set} of Algorithm~\ref{alg:Affpoints_main} is obtained by sorting the elements of $\mathcal{C}_{\thresh}$ according to increasing Euclidean distance to $x_k$.

%*********
% Section
%*********
\section{Numerical Experiments}
\label{sec.numerical} 

The purpose of our experiments is to demonstrate the reduction in function evaluations that can be achieved by our method through its exploitation of prior function evaluations when solving a sequence of related problems.  We compare Algorithm~\ref{alg.dfo_approx} as it is stated, referred to in this section as $\alg_{\HHH}$, and an algorithm that has all of the same features of Algorithm~\ref{alg.dfo_approx} except that does not utilize prior function evaluations to \emph{approximate} function values at interpolation points, referred to as $\alg_{\emptyset}$. Hence, similar to existing model-based DFO algorithms, $\alg_{\emptyset}$ utilizes function evaluations that are obtained in prior iterations by possibly choosing them as candidate interpolation points for Algorithm~\ref{alg:Affpoints_main} in subsequent iterations, but it does not use such information for approximation.  We set $\tr = 2$ and $\app = 2$ in our experiments. We choose $T = 100$, and for all $t \in \{0, \dots, T-1\}$ we run both algorithms until a fixed budget of two simplex gradient evaluations (i.e., $2p(n_x + 1)$ function evaluations) is exhausted.  
We consider such a relatively small amount of function evaluations because the advantage of our method is exploited best when solution time is crucial, such as an online setting, in which the true optimum is out of reach and a good approximate solution suffices.

We implemented our algorithm in Python  by using tools from the Scipy and Numpy packages. We ran our experiments on a Linux workstation  with the following hardware specifications: 2 Intel Xeon Gold 5218R CPU with 20 cores @ 2.10GHz, and 256GB RAM. The use of prior function values obtained during the solution of previous instances is considered as an option in our implementation. To run $\alg_{\emptyset}$, this option is turned off.   We tested our algorithm on a variety of problems,  {including two problems from COPS problem set \cite{dolan2004benchmarking}, namely, Methanol to Hydrocarbons and Catalytic Cracking of Gas Oil. The results were similar in all cases.} 
%For our purposes 
Here, we present the results obtained from a single representative least-squares problem involving an ODE that describes the conversion of methanol into various hydrocarbons \cite{dolan2004benchmarking}, which for parameters $x = \begin{bmatrix} x_1 & \dots & x_5 \end{bmatrix}^T \in \RR^5_{\geq 0}$ and state $v(\tau) = \begin{bmatrix} v_1(\tau) & v_2(\tau) & v_3(\tau) \end{bmatrix}^T \in \RR^3$ is described (with $\tau$ denoting time) by:
\begin{align*}
\frac{d v_1}{d \tau} &= - \Big(2x_2 - \frac{x_1 v_2}{(x_2 + x_5) v_1 + v_2} + x_3 + x_4\Big) v_1; \\
\frac{d v_2}{d \tau} &= \frac{x_1 v_1 (x_2 v_1 - v_2)}{(x_2 + x_5) v_1 + v_2} + x_3 v_1; \\
\frac{d v_3}{d \tau} &= \frac{x_1 v_1(v_2 + x_5 v_1)}{(x_2 + x_5)v_1 + v_2} + x_4 v_1.
\end{align*}
Here, the constraint set is $\Omega_t = \RR^5_{\geq0}$ for all $t \in \{0,\dots,T-1\}$.

We set $\xi = 10^{-3}$ which satisfies the bound in Theorem~\ref{thm:poised_direction}. In practice, $\xi$ should not be too small, nor should it be close to the  upper bound provided in Theorem~\ref{thm:poised_direction} as the system of equations for obtaining model parameters might become ill-conditioned or the requirement for adding the next interpolation point might become too restrictive, respectively. Moreover, the regularization parameter $\lambda$  ($= 10^{-6}$ in our experiments) in \eqref{eq:opt_regularized_reg_1}  is typically a small number since its role is not to decrease the norm of optimal $\alpha_1$ and $\alpha_2$, but merely to make \eqref{eq:vec_beta} have a unique solution, even if $M^T M$ is ill-conditioned.

We fix a vector $\bar{x} = \begin{bmatrix}
1.78 & 2.17 & 1.86 & 1.80 & 0
\end{bmatrix}^T \in \RR^5_{\geq0}$,
%\Footnote{AW: Need to say what this is in our experiments.  Maybe just give the specific values?}
then, iteratively for each $t \in \{0,\dots,T-1\}$, we generate the data for the $t$th least-squares objective in the following manner.  First, we establish seven initial conditions by taking each of the following vectors, perturbing it by a realization of a random vector having a uniform distribution over a 2-norm ball with radius 0.1, then projecting the result onto the 3-dimensional standard simplex so that the elements are nonnegative and sum to one:
\begin{equation*}
\left\{
\begin{bmatrix} 1 \\ 0 \\ 0 \end{bmatrix}, 
\begin{bmatrix} \tfrac34 \\ \tfrac14 \\ 0 \end{bmatrix}, 
\begin{bmatrix} \tfrac34 \\ 0 \\ \tfrac14 \end{bmatrix}, 
\begin{bmatrix} \tfrac12 \\ \tfrac12 \\ 0 \end{bmatrix}, 
\begin{bmatrix} \tfrac12 \\ 0 \\ \tfrac12 \end{bmatrix}, 
\begin{bmatrix} \tfrac14 \\ \tfrac34 \\ 0 \end{bmatrix}, 
\begin{bmatrix} \tfrac14 \\ 0 \\ \tfrac34 \end{bmatrix}
\right\}.
\end{equation*}
(This projection of the initial condition is meaningful for the application since the state elements correspond to proportions.)  We denote the resulting $l$th initial condition by $v_{(l)}^0 \in \RR^3_{\geq0}$ for all $l \in [7]$.  Second, we establish the time points $\tau_{(1)} = 0.1$, $\tau_{(2)} =0.4$, and $\tau_{(3)} =0.8$, which are fixed for all $t \in \{0,\dots,T-1\}$.  At this point in the data generation for problem $t$, we have established $\{w_{i,t}\}_{i\in[21]}$, with each one corresponding to a given initial condition and time point; specifically, each $i \in [21]$ corresponds to a unique pair $(j,l) \in [3] \times [7]$, corresponding to which we define $w_{i,t} := \begin{bmatrix} \tau_{(j)} & (v_{(l)}^0)^T \end{bmatrix}^T$.  All that remains to generate the data for problem $t$ is to establish the values $\{y_{i,t}\}_{i\in[21]}$.  For this, we first generate $x_{(t)} \in \RR^5_{\geq0}$ by adding to $\bar{x}$ a realization from a uniform $[0,1]^5$ distribution.  Then, for all $i \in [21]$, we let $\phi(x_{(t)},w_{i,t})$ denote the value of $v_3$ from the integration of the ODE at time $[w_{i,j}]_0$ when using the initial condition $[w_{i,t}]_{1:3}$ and set
\begin{equation*}
y_{i,t} \gets \phi(x_{(t)},w_{i,t}) + |\phi(x_{(t)},w_{i,t})| u_{i,t},
\end{equation*}
where $u_{i,t}$ is a realization from a uniform $[-0.1,0.1]$ distribution.  Overall, we have established the data $\{(w_{i,t},y_{i,t})\}_{i\in[21]}$ for problem $t$, which is defined as in \eqref{eq:opt_least_square} with $\phi$ defined as above.  By generating the problem data in this manner for $t \in \{0,\dots,T-1\}$, each optimization problem is similar, but different due to the randomization of the initial conditions and the noise in the measurement data.

To understand the typical behavior of our algorithm, we generated a total of $N=100$ macro replications, each one with a sequence of $T=100$ instances as described in the previous paragraph.  Each repetition starts with problem $t=0$, which involves no history of function evaluations, meaning that $\alg_{\HHH}$ and $\alg_{\emptyset}$ always perform equivalently for $t=0$.
However, for all $t \in \{1,\dots,T-1\}$, $\alg_{\HHH}$ makes use of prior function evaluations when possible while $\alg_{\emptyset}$ does not.  Let $f_{\HHH,t}^k$ and $f_{\emptyset,t}^k$, respectively, denote the final objective function value obtained by $\alg_{\HHH}$ and $\alg_{\emptyset}$ when solving repetition $k$ of problem $t$.  Averaging over the macro replications, we obtain the values $\bar{f}_{\HHH,t} = \tfrac1N \sum_{k\in[N]} f_{\HHH,t}^k$ and $\bar{f}_{\emptyset,t} = \tfrac1N \sum_{k\in[N]} f_{\emptyset,t}^k$ for all $t \in \{0,\dots,T-1\}$.  In addition, to get a sense of $\alg_{\HHH}$'s ability to take more steps within the function evaluation limit by using approximated function values in place of true function values, we record $M_t^k$ as the number of approximated function values used in instance $t$ of replication $k$.  These are averaged over the replications to obtain $\overline M_t = \tfrac1N \sum_{k\in [N]} M_t^k$ for all $t \in \{0,\dots,T-1\}$.

Figure~\ref{fig.comparison2} presents the results of our experiments.  For all $t \in \{0,\dots,T-1\}$, the plot on the left shows $\sum_{\bar{t}\in[t]} (\bar f_{\emptyset, \bar{t}} - \bar f_{\HHH, \bar{t}})$, the accumulated improvement of $\alg_{\HHH}$ over $\alg_{\emptyset}$, as well as the surrounding interval of width $\pm \frac{1.96}{\sqrt{N}} \sigma_t$, where $\sigma_t$ is the standard deviation of $\{\sum_{\bar{t}\in[t]} (f_{\emptyset,\bar{t}} -  f_{\HHH,\bar{t}})\}_{k\in[N]}$.  The increasing trend shows that as the function evaluation history increases in size, $\alg_{\HHH}$ is continually able to obtain improved final objective values over $\alg_{\emptyset}$.  The plot on the right shows, for all $t \in \{0,\dots,T-1\}$, the average number of function values that $\alg_{\HHH}$ is able to approximate (instead of evaluate) in a run of the algorithm.  Recalling that the budget in each run is $2p(n_x+1) = 2 \times 21 \times (5+1) = 252$, the plot shows that by problem $t \approx 10$ over half of the function values used by $\alg_{\HHH}$ come from approximations rather than (expensive) actual evaluations, which allows the algorithm to take more iterations to improve the objective compared to~$\alg_{\emptyset}$.

\begin{figure}[ht]
	\caption{$\alg_{\HHH}$ and $\alg_{\emptyset}$ comparison.}
	\label{fig.comparison2}
	\centering
	\includegraphics[width=0.9\textwidth,clip=true,trim=30 32 25 30]{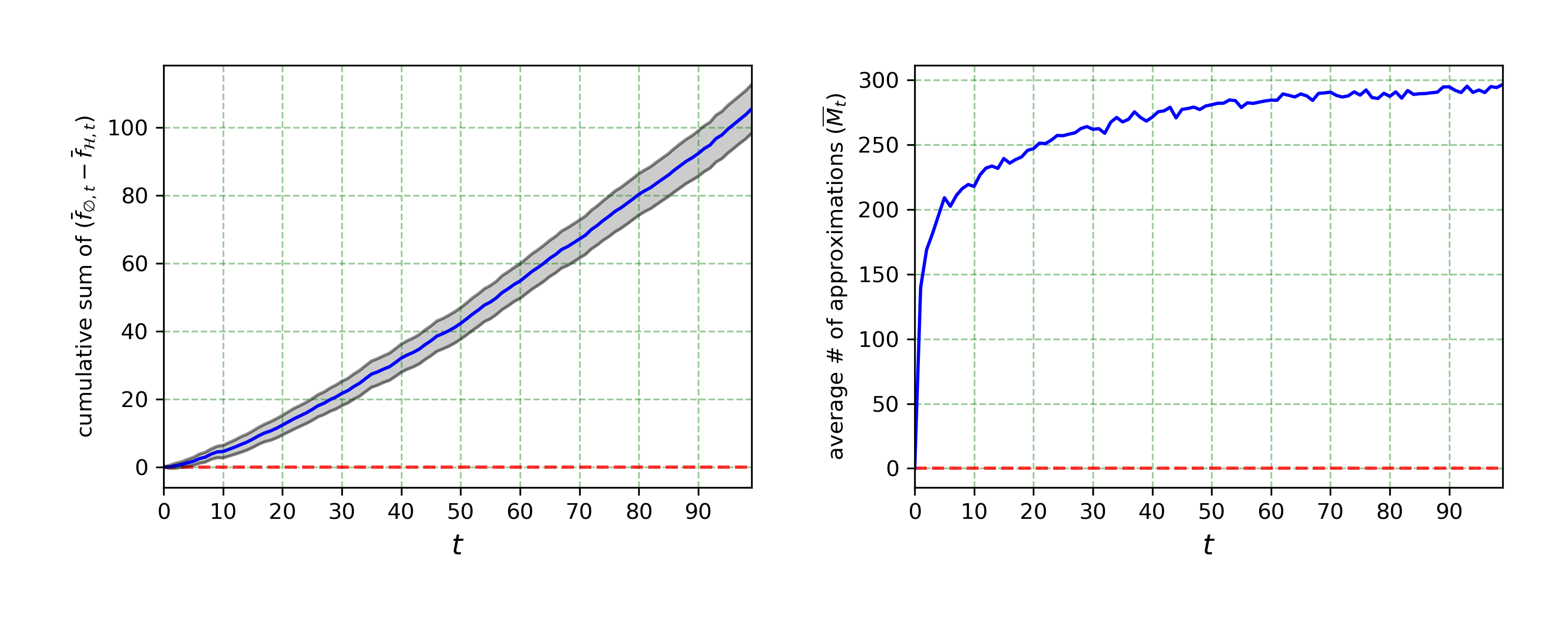}
\end{figure}

% %*********
% % Section
% %*********
{\section{Conclusion}\label{sec.conclusion}
	We proposed and analyzed a model-based DFO algorithm for solving 
	optimization problems under the assumption that the black-box objective function is smooth and black-box function evaluations are the computational bottleneck of the algorithm. The distinguishing feature of our algorithm is the use of approximate function values which can be obtained by an application-specific surrogate model that is cheap to evaluate or by using prior function values that are available from the previous solution of related optimization problems. We provided a regression-based method that approximates the objective function at interpolation points for the latter case.   In addition, we proposed an algorithm for choosing a set of well-poised interpolation points  that satisfy unrelaxable bound constraints. Our numerical results showed that our algorithm outperforms a state-of-the-art DFO algorithm for solving an engineering problem when a history of black-box function evaluations is available.}

%**************

%**************
\section{Acknowledgements}
Frank E. Curtis was supported in part by National Science Foundation Grant [NSF CCF-2008484], and Shima Dezfulian and Andreas W\"achter were suppored in part by Department of Energy Grant [ARPA-E DE-AR0001073] and National Science Foundation Grant [NSF CCF-2008484]. 
%**************
% Bibliography
%**************

%**************
% Bibliography
%**************

\bibliographystyle{tfs}
\bibliography{references.bib}
% \bibliographystyle{siamplain}
% \bibliography{references}

%**********
% Appendix
%**********
\appendix
\newpage
\section{Proofs of Lemmas~\ref{lemma:comp_fl}, \ref{lemma:bounded_hessian_m}, and \ref{lemma:master_fl}, and Theorem~\ref{th:shima}}\label{app:lemmas_proof}

\begin{proof}[Lemma~\ref{lemma:comp_fl}]
  Consider an arbitrary index $k \in \NN$, set ${\cal D}_k := \{d_1, \dots, d_{n_x}\} \subset B_{\tr}(0, \Delta_k) \cap (\Omega - x_k)$ satisfying the conditions of the lemma, point $x = x_k + s$ for some $\|s\|_\tr \leq \Delta_k$ such that $x \in \Omega$, and $i \in [p]$.  
%   For brevity, let us drop the dependence of $F_i$ on $\ttt$; i.e., let us use $F_i(x)$ to represent $F_i(x, \ttt)$. 
Following the proof of Theorem 4.1 in  \cite{stefan2008derivative}, let
  \begin{align*}
    e_{q}(s) \coloneqq q_{k,i}(x_k + s) - F_i(x_k + s)\ \ \text{and}\ \ 
    e_{\nabla q}(s) \coloneqq \nabla q_{k,i}(x_k + s) - \nabla F_i(x_k + s). 
  \end{align*}
  In addition, let $d_0 := 0$, and, for all $j \in \{0, 1, \dots, n_x \}$, let
  \begin{align*}
    I_{1,j} &\coloneqq \int_0^1 \langle \nabla F_i\big(x_k + s + t (d_j - s) \big) - \nabla F_i(x_k + s), d_j - s \rangle dt, \\
    I_{2,j} &\coloneqq \int_0^1 \langle \nabla q_{k, i}\big(x_k + s + t (d_j - s) \big) - \nabla q_{k,i}(x_k + s), d_j - s \rangle dt, \\
    I_{3,j} &\coloneqq \int_0^1 \langle \nabla F_i\big(x_k + s - t s \big) - \nabla F_i(x_k + s), - s \rangle dt, \ \ \text{and} \\
    I_{4,j} &\coloneqq \int_0^1 \langle \nabla q_{k, i}\big(x_k + s - t s \big) - \nabla q_{k, i}(x_k + s), - s \rangle dt.
  \end{align*}

  By Lemma~4.1.2 in \cite{dennis1996numerical}, it follows for all $j \in \{0, 1, \dots, n_x \}$ that
  \begin{equation}\label{eq:e_m}
    \langle e_{\nabla q}(s), d_j-s \rangle = I_{1,j} - I_{2,j} - e_{q}(s) - F_i(x_k + d_j) + q_{k, i}(x_k + d_j), 
  \end{equation}
while one also finds for all $j \in \{0, 1, \dots, n_x \}$ that
\begin{align}
  \langle e_{\nabla q}(s), d_j \rangle =&\ I_{1,j} - I_{2,j} - I_{3,j} + I_{4,j} \nonumber \\
    &\ - F_i(x_k + d_j) + q_{k, i}(x_k + d_j) + F_i(x_k) - q_{k, i}(x_k). \label{eq:l5}
\end{align}
Let us bound the integrals on the right-hand sides of \eqref{eq:e_m} and \eqref{eq:l5}.  First, under Assumption \ref{assumption:F_h}, it follows for all $j \in \{0, 1, \dots, n_x \}$ that
\begin{align}
     | I_{1, j} | 
      &\leq  \int_0^1 \|\nabla F_i\big(x_k + s + t (d_j - s) \big) - \nabla F_i(x_k + s)\|_\trc \| d_j - s\|_\tr  dt \nonumber \\
    %   \leq & \int_0^1 \nctrc \|\nabla F_i\big(x_k + s + t (d_j - s) \big) - \nabla F_i(x_k + s)\| \|d_j - s\|_\tr dt  \\
    &\leq L_{\nabla F_{i, x}}  \|d_j - s\|_\tr^2 \int_0^1  t dt 
    \leq  2  L_{\nabla F_{i, x}} \Delta_k^2. \label{eq:I1_bnd}
\end{align}
%Also, since $\|d_j - s \| \leq 2 \Delta_k$ for all $j=2, \dots, n_p+1$, the last inequality holds.
Similarly, under Assumptions~\ref{assumption:F_h} and \ref{assumption:q} it follows that
\begin{align}
\label{eq:I2,3,4_bnd}
  |I_{2, j} | 
    \leq  2  L_{\nabla q_i} \Delta_k^2, \quad     |I_{3, j}| 
%   \\
%   \leq& \int_0^1 \| \nabla F(x_i, p_k+s - ts) - \nabla  F(x_i, p_k+s) \| \| s\| dt \\
%   \leq& \int_0^1 L \| s\|^2 t dt \\
   \leq  \tfrac{1}{2} L_{\nabla F_{i, x}} \Delta_k^2, \quad |I_{4, j}| 
  \leq \tfrac{1}{2}  L_{\nabla q_i} \Delta_k^2.
\end{align}
% and for \eqref{eq:l3} we have
% \begin{align}
% \label{eq:F_bnd}
% \begin{split}
%   \Big |\int_0^1 \langle \nabla F\big(x_i, p_k + s - t s \big) - \nabla F(x_i, p_k + s), - s \rangle dt \Big | 
% %   \\
% %   \leq& \int_0^1 \| \nabla F(x_i, p_k+s - ts) - \nabla  F(x_i, p_k+s) \| \| s\| dt \\
% %   \leq& \int_0^1 L \| s\|^2 t dt \\
%   \leq  \frac{1}{2} L_g \Delta_k^2,
%   \end{split}
% \end{align}
% where to get the second inequality, we used the Lipschitz continuity of the gradient of $F(x_i, \cdot)$.
% similarly, for \eqref{eq:l4} we have
% \begin{align}
% \label{eq:m_bnd}
%   \Big |&\int_0^1 \langle \nabla m_{k, i}\big(p_k + s - t s \big) - \nabla m_{i,k}(p_k + s), - s \rangle dt \Big | 
%   \leq \frac{1}{2} \kappa_{apx}\Delta_k^2,
% \end{align}
Moreover, by \eqref{eq:gen_approx_intrpl_cond} and \eqref{cond:fl-2}, it follows for all $j \in \{0, 1, \dots, n_x \}$ that
\begin{align}
\label{eq:diff_F_bnd_1}
    |F_i(x_k + d_j) - q_{k, i}(x_k + d_j)| \leq \bar \kappa_{\apx} \Delta_k^2.
\end{align}
To show \eqref{eq:fl_g}, let $\bar D \coloneqq \begin{bmatrix} d_1 & \dots & d_{n_x} \end{bmatrix}$, note \eqref{eq:l5}--\eqref{eq:diff_F_bnd_1} shows $\|\bar D^T e_{\nabla q}(s) \|_{\infty} \leq \kappa_1 \Delta_k^2$, where  $\kappa_1 = (\tfrac{5}{2} (L_{\nabla F_{i, x}} + L_{\nabla q_i}) + 2 \bar \kappa_{\apx})$, then, with \eqref{eq.equiv1} and \eqref{cond:fl-1}, see that
\begin{align}
  \|e_{\nabla q}(s) \|_{\trc} \leq \kappa_{\tr_0^*}\|\bar D^{-T}\|_2 \| \bar D ^{T} e_{\nabla q}(s) \|_{\infty} \leq \kappa_{\tr_0^*} \kappa_1  \Lambda \Delta_k =: \hat \kappa_{\eg} \Delta_k; \label{eq.firstone}
\end{align}
\eqref{eq:fl_g} follows since $s$ was chosen arbitrarily in $B_{\tr}(0; \Delta_k)$ such that $x = x_k + s \in \Omega$.  To show \eqref{eq:fl_m}, it follows from \eqref{eq.firstone}, \eqref{eq:e_m} for $d_1=0$, the fact that $I_{1,j}$ (respectively, $I_{2,j}$) reduces to $I_{3,j}$ (respectively, $I_{4,j}$) for $j=0$ (since $d_0 = 0$), and \eqref{eq:I1_bnd}--\eqref{eq:diff_F_bnd_1} that
\begin{align*}
    |e_{q}(s)| &\leq \| e_{\nabla q}(s) \|_{\trc} \| s\|_{\tr} + \tfrac{1}{2} (L_{\nabla F_{i,x}} + L_{\nabla q_i}) \Delta_k^2 + \bar \kappa_{\apx} \Delta_k^2 \\
    % &\leq \left( \tfrac12 (5\kappa_{\tr_0^*} \Lambda + 1) (L_{\nabla F_{i, x}} + L_{\nabla q_i})+ (2\kappa_{\tr_0^*} \Lambda + 1) \bar \kappa_\apx\right) \Delta_k^2 = \hat \kappa_{\ef} \Delta_k^2;
    & \leq \left(\hat \kappa_{\eg} + \frac{1}{2} (L_{\nabla F_{i,x}} + L_{\nabla q_i}) + \bar \kappa_{\apx} \right) \Delta_k^2 =: \hat \kappa_{\ef} \Delta_k^2;
\end{align*}
\eqref{eq:fl_m} follows since $s$ was chosen arbitrarily in $B_{\tr}(0; \Delta_k)$ with $x = x_k + s \in \Omega$.
\end{proof}

\begin{proof}[Proof of Lemma~\ref{lemma:bounded_hessian_m}]
  Under Assumption~\ref{assumption:q}, it follows that $\|\nabla^2 q_{k, i}(x_k)\|_{(\tr, \trc)} \leq L_{\nabla q_i}$ for all $k \in \NN$ and $i \in [p]$ and $\|\nabla^2 h(q_k(x_k))\|_2 \leq L_{\nabla h}$ for all $k \in \NN$.  Consequently, by \eqref{eq.equiv3}, \eqref{eq:q_consts}, and the definition of $m_k$ in \eqref{eq:master_model}, one finds that
 \begin{align*}
     &\ \|\nabla^2 m_k(x_k) \|_{(\tr, \trc)} \\
     &\leq\ \sum_{i=1}^p |\partial_i h(q_k(x_k))| \| \nabla^2 q_{k, i} (x_k) \|_{(\tr, \trc)} + \kappa_{\tr_2^*}\| \nabla q_k(x_k) \|_{\trc, 1}^2 \|\nabla^2 h(q_k(x_k)) \|_2 \\
     &\leq\ \sum_{i=1}^p (\kappa_{\partial_i h} L_{\nabla q_i})  + \kappa_{\tr_2^*}  \kappa_{\nabla q}^2 L_{\nabla h} =: \kappa_{\bhm},
 \end{align*}
  as desired.
\end{proof}

\begin{proof} [Proof of Lemma~\ref{lemma:master_fl}]
  Consider an arbitrary index $k \in \NN$, set $\DDD_k := \{d_1,\dots,d_{n_x}\} \subset B_{\tr}(0, \Delta_k) \cap (\Omega - x_k)$ satisfying the conditions of the lemma, point $x = x_k + s$ for some $\|s\|_\tr \leq \Delta_k$ such that $x \in \Omega$, and $i \in [p]$. 
%   For brevity, let us drop the dependence of $F$ and $F_i$ on $\ttt$; i.e., let us use $F(x)$ and $F_i(x)$ to represent  $F(x, \ttt)$ and $F_i(x, \ttt)$, respectively.  
To show the first bound in \eqref{eq:main_thm_fcn_bnd}, observe by the Mean Value Theorem that \begin{align}
       f(x) =&\ f(x_k) + \nabla f(x_k + \tau_1 s)^Ts \label{eq:taylor_f} \\
       =&\ h(F(x_k)) + \nabla h(F(x_k + \tau_1 s))^T \nabla F(x_k + \tau_1 s)^T s\ \ \text{for some $\tau_1 \in [0, 1]$}. \nonumber
  \end{align}
  Similarly, for the master model $m_k$ in \eqref{eq:master_model}, it follows for some $\tau_2 \in [0, 1]$ that
  \begin{align}\label{eq:taylor_m}
    m_k(x) = h(q_k(x_k)) + \nabla h(q_k(x_k + \tau_2 s))^T \nabla{q_k}(x_k + \tau_2 s)^T s.
  \end{align}
  Therefore, by \eqref{eq:taylor_f} and \eqref{eq:taylor_m}, one finds that
  \begin{multline}\label{eq:abs_diff_fcn_master_mod}
      |f(x) - m_k(x)| \leq  \big| h(F(x_k)) - h(q_k(x_k))\big| \\
    + \|\nabla F(x_k + \tau_1 s) \nabla h(F(x_k + \tau_1 s)) - \nabla {q_k}(x_k + \tau_2 s) \nabla h(q_k(x_k + \tau_2 s)) \|_{\trc} \|s\|_\tr.
  \end{multline}
  To bound the first term on the right-hand side of \eqref{eq:abs_diff_fcn_master_mod}, one finds from \eqref{eq:gen_approx_intrpl_cond} and \eqref{cond:fl-2} and Assumptions \ref{assumption:F_h} and \ref{assumption:q} that 
%   \begin{align}\label{eq:fcn_diff_bnd}
%     |h(F(x_k)) - h(q_k(x_k))| \leq L_h \| F(x_k) - q_k(x_k) \|_2 \leq \sqrt{p} L_h \bar \kappa_{\app} \Delta_k^2
%   \end{align}
  \begin{align}\label{eq:fcn_diff_bnd}
    |h(F(x_k)) - h(q_k(x_k))| \leq L_h \| F(x_k) - q_k(x_k) \|_2 \leq \kappa_2 \Delta_k^2
  \end{align}
  where $\kappa_2 = \sqrt{p} L_h \bar \kappa_{\app}$.
  To bound the second term, recall that $\| s\|_\tr \leq \Delta_k$ and $\Delta_k \leq \Delta_{\max}$ for all $k \in \NN$; in addition, by \eqref{eq.equiv2}, \eqref{eq:F_x_lipschitz_consts}, \eqref{eq:q_consts}, the fact that $|\tau_1 - \tau_2| \leq 1$, Lemma \ref{lemma:comp_fl}, and Assumptions \ref{assumption:F_h} and  \ref{assumption:q} it follows that
  \begin{align}
    &\ \|\nabla F(x_k + \tau_1 s)  \nabla h(F(x_k + \tau_1 s)) -  \nabla q_k(x_k + \tau_2 s) \nabla h(q_k(x_k + \tau_2 s))  \|_{\trc} \label{eq:grad_diff_bnd} \\
    \leq&\ \kappa_{\tr_1^*} \| \nabla F(x_k + \tau_1 s) - \nabla q_k(x_k + \tau_1 s) \|_{\trc, 1} \| \nabla h(F(x_k + \tau_1 s)) \|_2 \nonumber \\
    &\ +  \kappa_{\tr_1^*} \| \nabla q_k(x_k + \tau_1 s) - \nabla q_k(x_k + \tau_2 s) \|_{\trc, 1} \| \nabla h(q_k(x_k + \tau_2 s))\|_2 \nonumber \\
    &\ + \kappa_{\tr_1^*} \| \nabla q_k(x_k + \tau_1 s) \|_{\trc, 1} \| \nabla h(F(x_k + \tau_1 s)) - \nabla h(q_k(x_k + \tau_2 s)) \|_2 \nonumber \\
    \leq&\ p \kappa_{\tr_1^*} \hat \kappa_{\eg}  \|\nabla h(F(x_k + \tau_1 s)) \|_2 \Delta_k  + \kappa_{\tr_1^*} L_{\nabla q} \| \nabla h(q_k(x_k + \tau_2 s)) \|_2 \Delta_k \nonumber \\
    &\ + \kappa_{\tr_1^*} L_{\nabla h} \| \nabla q_k(x_k + \tau_1 s) \|_{\trc, 1} \| F(x_k + \tau_1 s) - F(x_k + \tau_2 s) \|_2 \nonumber \\
    &\ + \kappa_{\tr_1^*} L_{\nabla h} \| \nabla q_k(x_k + \tau_1 s) \|_{\trc, 1} \|F(x_k + \tau_2 s) -  q_k(x_k + \tau_2 s) \|_2  
    % \leq&\ p \kappa_{\tr_1^*}  \hat \kappa_{\eg}  \|\nabla h(F(x_k + \tau_1 s)) \|_2 \Delta_k  + \kappa_{\tr_1^*} L_{\nabla q} \| \nabla h(q(x_k + \tau_2 s)) \|_2  \Delta_k \\
    % &\ + \kappa_{\tr_1^*} L_{\nabla h}  \big( \bar L_{F_x} + \sqrt{p}  \hat \kappa_{\ef}  \Delta_k \big)  \| \nabla q_k(x_k + \tau_1 s) \|_{\trc, 1}  \Delta_k.
    \leq  \kappa_3 \Delta_k;  \nonumber 
  \end{align}
  where $\kappa_3 := \ \kappa_{\tr_1^*}  \left ( \kappa_{\nabla h} (p  \hat \kappa_{\eg}  +  L_{\nabla q})  + L_{\nabla h}  \kappa_{\nabla q} \left(\bar L_{F_x} + \sqrt{p} \hat \kappa_{\ef} \Delta_{\max}\right)\right)$.  From \eqref{eq:abs_diff_fcn_master_mod}--\eqref{eq:grad_diff_bnd}
  %and the fact that $\Delta_k \leq \Delta_{\max}$ for all $k \in \NN$, 
  it follows that the first bound in \eqref{eq:main_thm_fcn_bnd} holds for $\kappa_{\ef} = \kappa_2 + \kappa_3$, as desired.  Next, to show the second bound in \eqref{eq:main_thm_fcn_bnd}, first observe by Taylor's Theorem that
\begin{align}
\label{eq:taylor_grad_f}
    \nabla f(x) = 
    % & \nabla f(x_k) + \int_0^1 \nabla^2 f(x_k + t s)^T s d t\ = \
    \nabla F(x_k) \nabla h(F(x_k)) + \int_0^1 \nabla^2 f(x_k + \tau s) s d\tau,
\end{align}
where 
\begin{multline}\label{eq:hessian_f}
  \nabla^2 f(x_k + \tau s) = \sum_{i=1}^p \partial_i h(F(x_k + \tau s)) \nabla^2 F_i(x_k + \tau s) \\
  + \nabla F(x_k + \tau s) \nabla^2 h(F(x_k + \tau s)) \nabla F(x_k + \tau s)^T.
\end{multline}
Also, from \eqref{eq:master_model}, it follows that
\begin{align}
\label{eq:taylor_grad_m}
    \nabla m_k(x) =&  \nabla {q_k}(x_k) \nabla h(q_k(x_k)) 
    + \nabla^2 m_k(x_k) s. %\Big(\sum_{i=1}^p \partial_i h(q_k(x_k)) \nabla^2 q_{k, i}(x_k) + \nabla {q_k}(x_k) \nabla^2 h(q_k(x_k)) \nabla {q_k}(x_k)^T\Big) s
\end{align}
As a result, from \eqref{eq:taylor_grad_f} and \eqref{eq:taylor_grad_m}, one obtains
\begin{multline}\label{eq:grad_m_f_diff}
     \| \nabla m_k(x) - \nabla f(x) \|_{\trc} \leq   \left \| \nabla F(x_k) \nabla h(F(x_k)) - \nabla q_k(x_k) \nabla h(q_k(x_k)) \right \|_{\trc} \\
    + \left \| \int_{0}^1  \nabla^2 f(x_k + \tau s) s d \tau \right \|_{\trc} +  \| s\|_\tr  \| \nabla^2 m_k(x_k)\|_{(\tr, \trc)}.
\end{multline}
Let us now bound each of the terms on the right-hand side of \eqref{eq:grad_m_f_diff}. For the first term, by Assumptions \ref{assumption:F_h} and \ref{assumption:q} and Lemma \ref{lemma:comp_fl}, and \eqref{eq:F_x_lipschitz_consts}, one has that
\begin{align}\label{eq:first_term_grad_diff}
  &\ \| \nabla F(x_k) \nabla h(F(x_k)) - \nabla q_k(x_k) \nabla h(q_k(x_k)) \|_{\trc} \\ 
  \leq&\ \| \nabla F(x_k) \nabla h(F(x_k)) - \nabla F(x_k) \nabla h(q_k(x_k)) \|_{\trc} \nonumber \\
  &\ + \| \nabla F(x_k) \nabla h(q_k(x_k)) - \nabla q_k(x_k) \nabla h(q_k(x_k)) \|_{\trc}\nonumber \\
  \leq&\ \kappa_{\tr_1^*} \| \nabla F(x_k) \|_{\trc, 1} \| \nabla h(F(x_k)) - \nabla h(q_k(x_k)) \|_2 \nonumber \\
  &\ + \kappa_{\tr_1^*} \| \nabla F(x_k) - \nabla q_k(x_k) \|_{\trc, 1} \| \nabla h(q_k(x_k)) \|_2 \nonumber \\
  \leq&\ \kappa_{\tr_1^*} \sqrt{p} L_{F_x} L_{\nabla h} \bar \kappa_{\app} \Delta_k^2 + \kappa_{\tr_1^*} p \kappa_{\nabla h}  \hat \kappa_{\eg}  \Delta_k 
%   \leq&\ \kappa_{\tr_1^*} \sqrt{p} \big(L_{F_x} L_{\nabla h} \bar \kappa_{\app} \Delta_{\max} + \sqrt{p} \kappa_{\nabla h} \hat \kappa_{\eg} \big) \Delta_k.
\leq \kappa_4 \Delta_k \nonumber 
\end{align}
where $\kappa_4 := \kappa_{\tr_1^*} \sqrt{p} \big(L_{F_x} L_{\nabla h} \bar \kappa_{\app} \Delta_{\max} + \sqrt{p} \kappa_{\nabla h} \hat \kappa_{\eg} \big)$.  For the second term, one finds
\begin{align*}
 \left \| \int_0^1 \nabla^2 f(x_k + \tau s)  s d \tau \right \|_{\trc} \leq %& \int_0^1   \left \| \nabla^2 f(x_k + \tau s)  s \right \|_{\trc} d \tau \\ 
%  \leq &  \int_0^1   \left \| \nabla^2 f(x_k + \tau s) \right \|_{(\tr, \trc)} \left \| s \right \|_{\tr} d \tau  \\
%  =& 
 %\leq & 
 \left \| s \right \|_{\tr} \int_0^1 \left \| \nabla^2 f(x_k + \tau s) \right \|_{(\tr, \trc)} d \tau,
\end{align*}
where, by \eqref{eq:hessian_f}, \eqref{eq.equiv3}, \eqref{eq:F_x_lipschitz_consts}, and Assumptions \ref{assumption:F_h} and  \ref{assumption:q}, it follows that
\begin{multline}\label{eq:second_term_hessian_f}
  \|  \nabla^2 f(x_k + \tau s)  \|_{(\tr, \trc)}
    \leq \sum_{i=1}^p  |\partial_i h(F(x_k + \tau s))|  \left \| \nabla^2 F_i(x_k + \tau s)  \right \|_{(\tr, \trc)}\\
    + \kappa_{\tr_2^*} \|\nabla F(x_k + \tau s) \|_{\trc, 1}^2 \|\nabla^2 h(F(x_k + \tau s))\|_2 \leq  \kappa_5.
\end{multline}
where $\kappa_5 := \sum_{i=1}^p (\kappa_{\partial_i h}L_{\nabla F_{i, x}}) + \kappa_{\tr_2^*} L_{F_x}^2 L_{\nabla h}$.  For the third term, by Lemma \ref{lemma:bounded_hessian_m},
\begin{align}
 \label{eq:third_term_hessian_m}
 \| \nabla^2 m_k(x_k) \|_{(\tr, \trc)} \leq \kappa_{\bhm}.
\end{align}
Substituting \eqref{eq:first_term_grad_diff}--\eqref{eq:third_term_hessian_m} into \eqref{eq:grad_m_f_diff}, one obtains
\begin{align*}
   \| \nabla m_k(x) - \nabla f(x) \|_{\trc} \leq (\kappa_4 + \kappa_5 + \kappa_{\bhm}) \Delta_k =: \kappa_{\eg} \Delta_k
%   \leq&\ \Big(\kappa_{\tr_1^*} \sqrt{p} \big(L_{F_x} L_{\nabla h} \bar \kappa_{\app} \Delta_{\max} + \sqrt{p} \kappa_{\nabla h} \hat \kappa_{\eg} \big)\\
%   &+ \sum_{i=1}^p \big(\kappa_{\partial_i h}L_{\nabla F_{i, x}} \big) 
%   +\kappa_{\tr_2^*} L_{\nabla h} L_{F_x}^2 + \kappa_{\bhm} \Big)\Delta_k
\end{align*}
as desired.
\end{proof}

\begin{proof}[Proof of Theorem~\ref{th:shima}]
  Since $|\DDD_k| = n_x+1$, let us express $[\DDD_k]_{1:{\rm end}} = \begin{bmatrix} d_1 & \cdots & d_{n_x} \end{bmatrix}$, where it follows from $\EEE \subset B_{\tr}(x_k, \Delta_k)$, the manner in which $\DDD_k$ is constructed, and \eqref{eq.equiv7} that $\|d_i\|_2 \leq \kappa_{\tr_0} \|d_i\|_{\tr} \leq \kappa_{\tr_0} \Delta_k$  for all $i \in [n_x]$.  In addition, let $\{\sigma_i\}_{i=1}^{n_x}$ be the singular values of $\frac{1}{\kappa_{\tr_0} \Delta_k} [\DDD_k]_{1:{\rm end}}$ such that $\sigma_1 \leq \cdots \leq \sigma_{n_x}$.  One finds that
  \begin{align}\label{eq:D_inv_bnd1}
    \|[\DDD_k]_{1:{\rm end}}^{-1}\|_2 = \frac{1}{ \kappa_{\tr_0} \Delta_k} \left\| \left(\frac{[\DDD_k]_{1:{\rm end}}}{\kappa_{\tr_0} \Delta_k} \right)^{-1}\right\|_2 = \frac{1}{\sigma_1 \kappa_{\tr_0} \Delta_k}.
  \end{align}
  In addition, letting $QR$ denote a QR factorization of $\frac{1}{\Delta_k} [\DDD_k]_{1:{\rm end}}$, it follows that the determinant of $\frac{1}{\Delta_k} [\DDD_k]_{1:{\rm end}}$ is equal to the product of the diagonal elements of~$R$, call them $\{r_i\}_{i=1}^{n_x}$.  
  Recalling $\|d_i\|_2 \leq \kappa_{\tr_0} \Delta_k$, we have
  \[
  \sigma_{n_x} = \left \| \frac{[\DDD_k]_{1: {\rm end}}}{\kappa_{\tr_0} \Delta_k}  \right \|_2 \leq \left \| \frac{[\DDD_k]_{1: {\rm end}}}{\kappa_{\tr_0} \Delta_k}  \right \|_F \leq \sqrt{n_x}.
  \]
  Since Lemma~\ref{lemma:proj_norm_QR} ensures that $|r_i| \geq \xi$ for all $i \in [n_x]$, it follows that
  \begin{align*}
    \sigma_1 n_x^{\frac{n_x - 1}{2}} \geq\sigma_1 \sigma_{n_x}^{n_x - 1} \geq \prod_{i=1}^{n_x} \sigma_i =  \left| \det \left(\frac{[\DDD_k]_{1:{\rm end}}}{\kappa_{\tr_0} \Delta_k}\right) \right| =  \left| \prod_{i=1}^{n_x} \frac{r_{i}}{\kappa_{\tr_0}}  \right| \geq \left(\frac{\xi}{\kappa_{\tr_0}}\right)^{n_x}.
  \end{align*}
  These inequalities along with \eqref{eq:D_inv_bnd1} yields
  \begin{align*}
    \| [\DDD_k]_{1:{\rm end}}^{-1} \|_2 \leq \frac{n_x^{\frac{n_x - 1}{2}} \kappa_{\tr_0}^{n_x - 1}}{ \xi^{n_x}\Delta_k} = \frac{\Lambda }{\Delta_k},
  \end{align*}
  which is the desired conclusion.
\end{proof}

\section{Proof of Theorem~\ref{th.main}} \label{app:conv_proof}

Our first lemma shows that for any model $m_k$ of $f$ that is constructed, one has that the difference in the stationarity measures with respect to $m_k$ and $f$ are proportional to the radius of the trust region in which the model is fully linear.

\begin{lemma}\label{lem.pi}
 For all $k \in \NN$, it follows that for any value of $\Delta_k$ such that a model~$m_k$ is constructed, one has that $|\pi_k^f - \pi_k^m| \leq \kappa_{\eg} \Delta_k$.
\end{lemma}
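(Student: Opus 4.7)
The plan is to observe that $\pi_k^f$ and $\pi_k^m$ are absolute values of the minima of the two linear functions $d \mapsto \nabla f(x_k)^T d$ and $d \mapsto \nabla m_k(x_k)^T d$ over the \emph{same} feasible set
\[
\FFF_k := \{d \in \RR^{n_x} : x_k + d \in \Omega,\ \|d\|_{\tr} \leq 1\}.
\]
Since $d = 0 \in \FFF_k$, both minima are non-positive, hence taking absolute values reduces the claim to a uniform bound on the difference of the two objectives over $\FFF_k$.

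First, I would apply the elementary fact that, for any two real-valued functions $a,b$ on a common (nonempty) domain $\FFF_k$,
\[
\left| \min_{d \in \FFF_k} a(d) - \min_{d \in \FFF_k} b(d) \right| \leq \sup_{d \in \FFF_k} |a(d) - b(d)|.
\]
Applied with $a(d) = \nabla f(x_k)^T d$ and $b(d) = \nabla m_k(x_k)^T d$, this yields
\[
|\pi_k^f - \pi_k^m| \leq \sup_{d \in \FFF_k} \bigl|(\nabla f(x_k) - \nabla m_k(x_k))^T d\bigr|.
\]

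Second, I would invoke the dual-norm (H\"older-type) inequality. By the definition of the dual norm recalled in Section~\ref{sec:notation}, one has $|z^T d| \leq \|z\|_{\trc} \|d\|_{\tr}$ for every $(z,d) \in \RR^{n_x} \times \RR^{n_x}$. Combined with $\|d\|_{\tr} \leq 1$ for all $d \in \FFF_k$ and the fully linear gradient bound from the definition of fully linearity (which holds for $m_k$ by Lemma~\ref{lemma:master_fl} whenever it is constructed, evaluated at $x = x_k$), one obtains
\[
\sup_{d \in \FFF_k} \bigl|(\nabla f(x_k) - \nabla m_k(x_k))^T d\bigr| \leq \|\nabla f(x_k) - \nabla m_k(x_k)\|_{\trc} \leq \kappa_{\eg} \Delta_k,
\]
which combined with the previous display gives the claim.

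I do not anticipate any real obstacle beyond the bookkeeping of verifying that the stationarity measures on $\FFF_k$ are indeed non-positive so that the absolute values can be stripped, and that the pairing $(\|\cdot\|_{\tr},\|\cdot\|_{\trc})$ is dual in the sense used in the paper's notation. Both are immediate from the setup.
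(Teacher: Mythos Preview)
Your proposal is correct and follows essentially the same route as the paper: both arguments reduce to the inequality $|\pi_k^f - \pi_k^m| \leq \|\nabla f(x_k) - \nabla m_k(x_k)\|_{\trc}$ and then invoke the fully linear gradient bound from Lemma~\ref{lemma:master_fl}. The only difference is that the paper obtains the first inequality by citing \cite[Lemma~7]{conn1993global}, whereas you derive it from scratch via the elementary estimate $|\min_{\FFF_k} a - \min_{\FFF_k} b| \leq \sup_{\FFF_k}|a-b|$ together with the dual-norm inequality for the pair $(\|\cdot\|_{\tr},\|\cdot\|_{\trc})$; your version is thus a self-contained rendering of the same argument.
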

\begin{proof}
  Consider arbitrary $k \in \NN$.  It follows by construction of Algorithm~\ref{alg.dfo_approx} and Lemma~\ref{lemma:master_fl} that, for any value of $\Delta_k$ for which a model $m_k$ is constructed, $m_k$ is fully linear with respect to $f$ in $B(x_k,\Delta_k) \cap \Omega$, which along with \cite[Lemma~7]{conn1993global} shows that $|\pi_k^f - \pi_k^m | \leq \|\nabla f(x_k) - \nabla m_k(x_k)\|_{\trc} \leq \kappa_{\eg} \Delta_k$, as desired.
\end{proof}

We now prove that if a criticality step is not performed in iteration $k \in \NN$, then the stationarity measure with respect to $m_k$ is bounded below in terms of the trust region radius, and if in addition the stationarity measure with respect to $f$ is bounded below by a positive constant, then the stationarity measure with respect to $m_k$ is similarly bounded below by a positive constant.

\begin{lemma}\label{lem.criticality_not_called}
  For any $k \in \NN$, if the condition in line~\ref{st:check_criticality} does not hold, then $\pi_k^m \geq \min\{\epsilon_c, \mu^{-1}\Delta_k\}$.  If, in addition, $\pi_k^f \geq \epsilon \in \RR_{>0}$, then 
  \begin{align*}
    \pi_k^m \geq \epsilon_{\mc} := \min \left\{\epsilon_c, \frac{\epsilon}{1 + \kappa_{\eg} \mu} \right\} \in \RR_{>0}.
  \end{align*}
\end{lemma}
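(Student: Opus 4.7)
The plan is to proceed by straightforward case analysis on the negation of the condition in line~\ref{st:check_criticality}, combined with the application of Lemma~\ref{lem.pi} for the second claim.

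For the first claim, the negation of the criticality condition means that either $\pi_k^m > \epsilon_c$ or $\Delta_k \leq \mu \pi_k^m$ (or both). In the first subcase, I have $\pi_k^m > \epsilon_c \geq \min\{\epsilon_c, \mu^{-1}\Delta_k\}$ immediately. In the second subcase, rearranging gives $\pi_k^m \geq \mu^{-1} \Delta_k \geq \min\{\epsilon_c, \mu^{-1}\Delta_k\}$. In either case the desired bound follows.

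For the second claim, I additionally use Lemma~\ref{lem.pi}, which gives $\pi_k^m \geq \pi_k^f - \kappa_{\eg}\Delta_k \geq \epsilon - \kappa_{\eg}\Delta_k$. In the first subcase above I directly get $\pi_k^m > \epsilon_c \geq \epsilon_{\mc}$. In the second subcase, substituting $\Delta_k \leq \mu \pi_k^m$ into the bound from Lemma~\ref{lem.pi} yields $\pi_k^m \geq \epsilon - \kappa_{\eg}\mu \pi_k^m$, i.e., $(1+\kappa_{\eg}\mu)\pi_k^m \geq \epsilon$, which gives $\pi_k^m \geq \epsilon/(1+\kappa_{\eg}\mu) \geq \epsilon_{\mc}$, as desired.

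There is no real obstacle here; this is a short bookkeeping argument. The only thing worth being careful about is making sure the case split is correctly done on the \emph{negation} of the conjunction in line~\ref{st:check_criticality}, and that in the second subcase one correctly leverages the model's full-linearity (through Lemma~\ref{lem.pi}) to translate a bound on $\pi_k^f$ into a bound on $\pi_k^m$ using the fact that $\Delta_k$ itself is controlled by $\pi_k^m$.
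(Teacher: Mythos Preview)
Your proposal is correct and follows essentially the same approach as the paper's proof: a case split on the negation of the conjunction in line~\ref{st:check_criticality}, with Lemma~\ref{lem.pi} invoked in the second subcase to convert the bound on $\pi_k^f$ into one on $\pi_k^m$ via $\Delta_k \leq \mu\pi_k^m$. The only cosmetic difference is that the paper writes the chain as $\epsilon \leq \pi_k^f \leq |\pi_k^f-\pi_k^m|+\pi_k^m \leq (\kappa_{\eg}\mu+1)\pi_k^m$, whereas you rearrange Lemma~\ref{lem.pi} first and then substitute.
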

\begin{proof}
  Consider arbitrary $k \in \NN$.  If the condition in line~\ref{st:check_criticality} does not hold, then one has that $\pi_k^m > \epsilon_c$ or $\mu \pi_k^m \geq \Delta_k$; hence, $\pi_k^m \geq \min\{\epsilon_c, \mu^{-1}\Delta_k\}$, as desired.  Now suppose in addition that $\pi_k^f \geq \epsilon$.  If $\pi_k^m \geq \epsilon_c$, then there is nothing left to prove.  Otherwise, since the condition in line~\ref{st:check_criticality} does not hold, it follows that $\mu \pi_k^m \geq \Delta_k$, which along with Lemma~\ref {lem.pi} and the full linearity of $m_k$ yields
  \begin{align*}
    \epsilon \leq \pi_k^f \leq | \pi_k^f - \pi_k^m | + \pi_k^m \leq \kappa_{\eg} \Delta_k + \pi_k^m \leq (\kappa_{\eg} \mu +1) \pi_k^m.
  \end{align*}
  Combining the results from the two cases, the desired conclusion follows.
\end{proof}

Next, we show that if $\Delta_k$ is sufficiently small, then a successful step occurs.

\begin{lemma}\label{lem.delta_small_succ}
  For any $k \in \NN$, if trust region radius satisfies
  \begin{align}\label{eq:c_0}
    \Delta_k \leq \min\{c_0 \pi_k^m, 1\}, \ \ \text{where} \ \ c_0 := \min \left\{\mu, \frac{1}{\kappa_{\bhm} + 1}, \frac{\kappa_{\fcd}(1-\eta)}{2 \kappa_{\ef}} \right\},
  \end{align}
  then the condition in line~\ref{st:check_criticality} does not hold and $\rho_k \geq \eta$, i.e., the step is successful.
\end{lemma}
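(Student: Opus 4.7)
The plan is to verify the two conclusions separately, in the order in which they appear.

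First I would dispatch the criticality condition. The check in line~\ref{st:check_criticality} is a conjunction: $\pi_k^m \leq \epsilon_c$ and $\Delta_k > \mu \pi_k^m$. Since $c_0 \leq \mu$ by definition, the hypothesis $\Delta_k \leq c_0 \pi_k^m$ immediately gives $\Delta_k \leq \mu \pi_k^m$, so the second clause of the conjunction fails and the criticality step is skipped. This in particular means that in iteration $k$ the algorithm reaches line~\ref{st.optimize_model} and computes a trial step $s_k$ satisfying the Cauchy decrease inequality~\eqref{eq:cauchy_dec}.

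Next I would simplify the Cauchy decrease bound under the hypothesis. Using $\Delta_k \leq 1$ and $\Delta_k \leq c_0 \pi_k^m \leq \pi_k^m/(\kappa_{\bhm}+1)$, the minimum on the right-hand side of~\eqref{eq:cauchy_dec} is attained by $\Delta_k$, yielding
\begin{equation*}
m_k(x_k) - m_k(x_k + s_k) \geq \kappa_{\fcd}\, \pi_k^m\, \Delta_k.
\end{equation*}

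Then I would estimate $|\rho_k - 1|$ by comparing the actual and predicted reductions. By the triangle inequality and the full linearity guarantee from Lemma~\ref{lemma:master_fl} applied at both $x_k$ and $x_k+s_k$,
\begin{equation*}
\bigl| (f(x_k) - f(x_k + s_k)) - (m_k(x_k) - m_k(x_k + s_k)) \bigr| \leq 2\kappa_{\ef}\Delta_k^2.
\end{equation*}
Dividing by the lower bound on the predicted reduction yields
\begin{equation*}
|\rho_k - 1| \leq \frac{2\kappa_{\ef}\Delta_k^2}{\kappa_{\fcd}\,\pi_k^m\,\Delta_k} = \frac{2\kappa_{\ef}\Delta_k}{\kappa_{\fcd}\,\pi_k^m}.
\end{equation*}
Finally, the hypothesis $\Delta_k \leq c_0\pi_k^m \leq \kappa_{\fcd}(1-\eta)/(2\kappa_{\ef})\cdot \pi_k^m$ makes the right-hand side at most $1-\eta$, hence $\rho_k \geq \eta$ and the step is successful.

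The argument is almost mechanical once the three terms defining $c_0$ are matched with the three roles they play: one forces the criticality conjunction to fail, one reduces the Cauchy minimum to $\Delta_k$, and one bounds $|\rho_k-1|$. The only mild care needed is to ensure $\pi_k^m > 0$ so the ratio $2\kappa_{\ef}\Delta_k/(\kappa_{\fcd}\pi_k^m)$ is well-defined; this is automatic because if $\pi_k^m = 0$ then $\Delta_k \leq c_0 \pi_k^m = 0$ contradicts $\Delta_0 > 0$ and the dynamics of the trust region radius, so without loss of generality $\pi_k^m > 0$. I do not anticipate any substantial obstacle here; the lemma is a routine reduction once the constants have been specified, and the only subtlety is bookkeeping to confirm each factor in $c_0$ is invoked at the right step.
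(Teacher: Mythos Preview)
Your proposal is correct and follows essentially the same route as the paper's own proof: use $c_0\leq\mu$ to defeat the criticality test, use $c_0\leq 1/(\kappa_{\bhm}+1)$ together with $\Delta_k\leq1$ to reduce the Cauchy bound to $\kappa_{\fcd}\pi_k^m\Delta_k$, and then use full linearity at $x_k$ and $x_k+s_k$ plus $c_0\leq \kappa_{\fcd}(1-\eta)/(2\kappa_{\ef})$ to bound $|\rho_k-1|\leq 1-\eta$. Your remark that $\pi_k^m>0$ is implicit in the hypothesis (since $\Delta_k>0$ always) is a harmless clarification the paper omits.
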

\begin{proof}
  Consider arbitrary $k \in \NN$ such that \eqref{eq:c_0} holds.  Since $\Delta_k \leq c_0 \pi_k^m$, where $c_0 \leq \mu$, it follows that $\Delta_k \leq \mu \pi_k^m$, from which it follows that the condition in line~\ref{st:check_criticality} does not hold, as desired.  In addition, by \eqref{eq:cauchy_dec} and \eqref{eq:c_0}, it follows that
  \begin{align*}
    m_k(x_k) - m_k(x_k + s_k) \geq \kappa_{\fcd} \pi_k^m \min \left\{\frac{\pi_k^m}{\kappa_{\bhm} + 1}, \Delta_k, 1 \right\} = \kappa_{\fcd} \pi_k^m \Delta_k.
  \end{align*}
  Therefore, by \eqref{eq:act_pred_ratio} and \eqref{eq:main_thm_fcn_bnd}, one has that 
  \begin{align*}
    |\rho_k - 1 | &\leq \left| \frac{f(x_k) - m_k(x_k)}{m_k(x_k) - m_k(x_k + s_k)} \right| + \left | \frac{f(x_k + s_k) - m_k(x_k + s_k)}{m_k(x_k) - m_k(x_k + s_k)} \right| \leq \frac{2 \kappa_{\ef} \Delta_k}{\kappa_{\fcd} \pi_k^m},
  \end{align*}
  which, since \eqref{eq:c_0} ensures $\Delta_k \leq c_0\pi_k^m \leq \frac{\kappa_{\fcd}(1-\eta)}{2\kappa_{\ef}}$, implies that $|\rho_k - 1 | \leq 1 - \eta$.  Hence, $\rho_k \geq \eta$ and iteration $k$ yields a successful step, as desired.
\end{proof}

We now prove that if the stationarity measure with respect to $f$ is bounded below by a positive constant, then the trust region radius is similarly bounded below.
\begin{lemma}\label{lem.Delta_min}
  If $\pi_k^f \geq \epsilon \in \RR_{>0}$ for all $k \in \NN$, then $\Delta_k \geq \Delta_{\min}$ for all $k \in \NN$, where
  \begin{align}\label{eq:Delta_min}
    \Delta_{\min} := \min \left\{\Delta_0, \frac{\gamma_{\dec} \epsilon}{\kappa_{\eg} + \mu^{-1}}, \gamma_{\dec} \left(\kappa_{\eg} + \frac{2 \kappa_{\ef}}{\kappa_{\fcd} (1-\eta)} \right)^{-1} \epsilon, \gamma_{\dec} \mu \epsilon_{\mc}, \frac{\gamma_{\dec} \epsilon_{\mc}}{\kappa_{\bhm} + 1}, \gamma_{\dec} \right\}.
  \end{align}
\end{lemma}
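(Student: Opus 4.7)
The plan is to proceed by induction on $k$. The base case $k=0$ is immediate because $\Delta_0$ appears in the min that defines $\Delta_{\min}$. For the inductive step, assuming $\Delta_k \geq \Delta_{\min}$, I would examine the three possible updates of the trust-region radius in Algorithm~\ref{alg.dfo_approx}. On a successful step one has $\Delta_{k+1} \geq \Delta_k \geq \Delta_{\min}$ directly, since $\gamma_{\inc} > 1$. The remaining two cases (criticality and unsuccessful) both produce $\Delta_{k+1} = \gamma_{\dec}\Delta_k$, so it suffices to derive a positive lower bound on $\Delta_k$ in each of them that, after multiplication by $\gamma_{\dec}$, matches one of the terms inside the min in \eqref{eq:Delta_min}.

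For a criticality step, I would combine the triggering condition $\Delta_k > \mu \pi_k^m$ with Lemma~\ref{lem.pi} to write
$\epsilon \leq \pi_k^f \leq \pi_k^m + \kappa_{\eg}\Delta_k < (\mu^{-1} + \kappa_{\eg})\Delta_k$,
which yields $\Delta_k > \epsilon/(\kappa_{\eg} + \mu^{-1})$ and hence the entry $\gamma_{\dec}\epsilon/(\kappa_{\eg}+\mu^{-1})$ of \eqref{eq:Delta_min}.

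For an unsuccessful step, since no criticality was triggered, Lemma~\ref{lem.criticality_not_called} supplies $\pi_k^m \geq \epsilon_{\mc}$. Also, since the step was not successful, Lemma~\ref{lem.delta_small_succ} forces $\Delta_k > 1$ or $\Delta_k > c_0\pi_k^m$, where $c_0 := \min\{\mu,\,1/(\kappa_{\bhm}+1),\,\kappa_{\fcd}(1-\eta)/(2\kappa_{\ef})\}$. The case $\Delta_k > 1$ produces the $\gamma_{\dec}$ entry. Otherwise $\Delta_k > c_0\pi_k^m$, and I would split by which term realizes $c_0$. When $c_0 \in \{\mu,\,1/(\kappa_{\bhm}+1)\}$, I would use the crude bound $\pi_k^m \geq \epsilon_{\mc}$ to obtain the entries $\gamma_{\dec}\mu\epsilon_{\mc}$ and $\gamma_{\dec}\epsilon_{\mc}/(\kappa_{\bhm}+1)$. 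When $c_0 = \kappa_{\fcd}(1-\eta)/(2\kappa_{\ef})$, the crude bound would introduce $\epsilon_{\mc}$ instead of $\epsilon$, so here I would instead use the $\Delta_k$-dependent bound $\pi_k^m \geq \epsilon - \kappa_{\eg}\Delta_k$ from Lemma~\ref{lem.pi} and rearrange
$\Delta_k > \tfrac{\kappa_{\fcd}(1-\eta)}{2\kappa_{\ef}}(\epsilon - \kappa_{\eg}\Delta_k)$
into $\Delta_k > \epsilon/\bigl(\kappa_{\eg} + 2\kappa_{\ef}/(\kappa_{\fcd}(1-\eta))\bigr)$, matching the remaining entry of \eqref{eq:Delta_min}.

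The main obstacle is bookkeeping rather than any conceptual difficulty: one needs to decide, sub-case by sub-case, whether to apply the crude lower bound $\pi_k^m \geq \epsilon_{\mc}$ or the sharper $\Delta_k$-dependent bound from Lemma~\ref{lem.pi}, so that every update mode of the algorithm (successful, criticality, unsuccessful with $\Delta_k > 1$, and unsuccessful with $\Delta_k > c_0 \pi_k^m$ for each active branch of $c_0$) produces exactly one of the six entries in the min defining $\Delta_{\min}$. Once those case labels are lined up with the entries of \eqref{eq:Delta_min}, the inductive step closes and the result follows for all $k \in \NN$.
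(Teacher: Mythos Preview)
Your proposal is correct and follows essentially the same approach as the paper. The paper frames the argument as a contradiction on the first index $\hat{k}$ with $\Delta_{\hat{k}} < \Delta_{\min}$ rather than as forward induction, but this is cosmetic: both proofs handle the criticality case via Lemma~\ref{lem.pi} exactly as you do, and both handle the unsuccessful case by combining Lemma~\ref{lem.criticality_not_called} (to get $\pi_k^m \geq \epsilon_{\mc}$) with Lemma~\ref{lem.delta_small_succ}. The only organizational difference is that the paper, working from the upper bound $\Delta_{\hat{k}-1} < \gamma_{\dec}^{-1}\Delta_{\min}$, verifies all components of $\min\{c_0\pi_k^m,1\}$ simultaneously and then invokes Lemma~\ref{lem.delta_small_succ} to reach a contradiction, whereas you take the contrapositive of that lemma and split on which term attains $c_0$; the computations and the role of each entry in \eqref{eq:Delta_min} line up identically.
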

\begin{proof}
  To derive a contradiction, suppose that $\pi_k^f \geq \epsilon > 0$ for all $k \in \NN$ and that there exists $\hat{k} \in \NN$ such that $\Delta_{\hat{k}} < \Delta_{\min}$.  Since $\Delta_{\min} \leq \Delta_0$, it must be true that $\hat{k} > 0$, and by the definition of $\hat{k}$ one has $\Delta_{\hat{k}} < \Delta_{\min} \leq \Delta_{\hat{k} - 1}$.  Since $\Delta_{\hat{k}} < \Delta_{\hat{k} - 1}$, iteration $\hat{k} - 1$ yields either a criticality or unsuccessful step.  If iteration $\hat{k} - 1$ yields a criticality step, then by the condition in line~\ref{st:check_criticality} it must be true that $\mu \pi_{\hat{k}-1}^m < \Delta_{\hat{k} - 1}$, which along with Lemma~\ref{lem.pi} means that
  \begin{align*}
    \epsilon \leq \pi_{\hat{k} -1}^f \leq |\pi_{\hat{k} - 1}^f - \pi_{\hat{k} -1}^m | + \pi_{\hat{k} -1}^m \leq (\kappa_{\eg} + \mu^{-1}) \Delta_{\hat{k} - 1}.
  \end{align*}
  Hence, along with \eqref{eq:Delta_min}, it follows that
  \begin{align}\label{eq:gamma_dec_Delta_k_prime}
    \Delta_{\hat{k}} = \gamma_{\dec} \Delta_{\hat{k} - 1} \geq \frac{\gamma_{\dec} \epsilon}{\kappa_{\eg} + \mu^{-1}} \geq \Delta_{\min},
  \end{align}
  which contradicts the assumption that $\hat{k}$ is the first iteration index with $\Delta_{\hat{k}} < \Delta_{\min}$.  Thus, iteration $\hat{k} - 1$ does not yield a criticality step.  Now suppose that iteration $\hat{k} - 1$ yields an unsuccessful step, meaning that $\rho_{\hat{k} - 1} < \eta$.  Since $\pi_k^f \geq \epsilon > 0$ and the condition in line~\ref{st:check_criticality} does not hold in iteration $\hat{k}-1$ (since the iteration yields an unsuccessful step), it follows by Lemma~\ref{lem.criticality_not_called} that $\pi_{\hat{k} - 1}^m \geq \epsilon_{\mc}$.  We now claim that
  \begin{equation}\label{eq.delta_lower_proof}
    \Delta_{\hat{k} - 1} \leq  \frac{\kappa_{\fcd}(1- \eta) \pi_{\hat{k} -1}^m}{2 \kappa_{\ef}}.
  \end{equation}
  After all, if \eqref{eq.delta_lower_proof} does not hold, then it follows with Lemma~\ref{lem.pi} that
  \begin{align*}
    \epsilon \leq \pi_{\hat{k} -1}^f \leq |\pi_{\hat{k}- 1}^m - \pi_{\hat{k} - 1}^f| + \pi_{\hat{k} - 1}^m \leq \kappa_{\eg} \Delta_{\hat{k} - 1} + \pi_{\hat{k} - 1}^m < \left(\kappa_{\eg} + \frac{2 \kappa_{\ef}}{\kappa_{\fcd}(1 - \eta)} \right) \Delta_{\hat{k} - 1},
  \end{align*}
  but that contradicts the fact that iteration $\hat{k} - 1$ yielding an unsuccessful step and the definition of $\Delta_{\min}$ in \eqref{eq:Delta_min} together imply that
  \begin{align*}
    \Delta_{\hat{k} -1} = \gamma_{\dec}^{-1} \Delta_{\hat{k}} < \gamma_{\dec}^{-1} \Delta_{\min } \leq \left(\kappa_{\eg} + \frac{2 \kappa_{\ef}}{\kappa_{\fcd}(1 - \eta)} \right)^{-1 } \epsilon.
  \end{align*}
  Hence, \eqref{eq.delta_lower_proof} holds.  Combining this with the fact that \eqref{eq:Delta_min} also yields
  \begin{align*}
    \Delta_{\hat{k} -1} = \gamma_{\dec}^{-1} \Delta_{\hat{k}} < \gamma_{\dec}^{-1} \Delta_{\min} \leq \min \left\{\mu \epsilon_{\mc}, \frac{\epsilon_{\mc}}{\kappa_{\bhm} + 1}, 1 \right\},
  \end{align*}
  the fact that Lemma~\ref{lem.criticality_not_called} yields $\pi_k^m \geq \epsilon_{\mc}$, and $c_0$ from \eqref{eq:c_0}, it follows that
  \begin{align*}
    \Delta_{\hat{k} - 1} \leq \min \left\{\mu \pi_{\hat{k}-1}^m, \frac{\pi_{\hat{k}-1}^m}{\kappa_{\bhm} + 1}, \frac{\kappa_{\fcd}(1- \eta) \pi_{\hat{k} -1}^m}{2 \kappa_{\ef}}, 1 \right\} = \min \{c_0 \pi_{\hat{k}-1}^m , 1\}.
  \end{align*}
  Therefore, by Lemma~\ref{lem.delta_small_succ}, one deduces that $\rho_{\hat{k} - 1} \geq \eta$ which contradicts the assumption that iteration $\hat{k} -1$ yields an unsuccessful step.  Overall, we have reached a contradiction to the existence of $\hat{k} \in \NN$ such that $\Delta_{\hat{k}} < \Delta_{\min}$, as desired.
\end{proof}

Our next lemma shows that if the number of successful steps is finite, then
the trust region radius and stationarity measure with respect to $f$ must vanish.
\begin{lemma}\label{lem.finite_succ}
  If $|\{k \in \NN : \rho_k \geq \eta\}|$ is finite, then $\displaystyle \lim_{k \to \infty} \Delta_k = 0$ and $\displaystyle \lim_{k \to \infty} \pi_k^f = 0$.
\end{lemma}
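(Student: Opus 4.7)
My plan is to split the statement into the two limits and handle them in sequence, exploiting the fact that once successful steps stop occurring the iterate freezes. Let $K := \max\{k \in \NN : \rho_k \geq \eta\}$ (with the convention $K := -1$ if this set is empty, which is well defined by hypothesis). For every $k > K$, Algorithm~\ref{alg.dfo_approx} executes either the criticality step in line~\ref{st:criticality} or the unsuccessful step in line~\ref{st:cond_2_update}; both cases enforce $x_{k+1} = x_k$ and $\Delta_{k+1} = \gamma_{\dec}\Delta_k$ with $\gamma_{\dec} \in (0,1)$. Consequently $x_k = x_\star$ with $x_\star := x_{K+1}$ for all $k \geq K+1$, and $\Delta_k = \gamma_{\dec}^{k-K-1}\Delta_{K+1}$ for all $k \geq K+1$, which immediately yields $\lim_{k\to\infty}\Delta_k = 0$.

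For the second limit, since the iterate is frozen at $x_\star$, the stationarity measure defined in \eqref{def.pif} is constant for $k \geq K+1$; call its common value $\pi_\star := \pi_{K+1}^f$. Thus $\lim_{k\to\infty}\pi_k^f = \pi_\star$, and it suffices to show $\pi_\star = 0$, which I will do by contradiction. Assume $\pi_\star > 0$. Lemma~\ref{lem.pi} gives $\pi_k^m \geq \pi_\star - \kappa_{\eg}\Delta_k$ for every $k \geq K+1$, and since $\Delta_k \to 0$ we get $\pi_k^m \geq \pi_\star / 2 > 0$ for all $k$ sufficiently large. Recalling that line~\ref{st:intrpl_pts} constructs the model $m_k$ in \emph{every} iteration (so the hypotheses of Lemmas~\ref{lem.pi} and \ref{lem.delta_small_succ} remain in force even on criticality iterations), the condition $\Delta_k \leq \min\{c_0 \pi_k^m, 1\}$ of Lemma~\ref{lem.delta_small_succ} is satisfied for all $k$ large enough because $\Delta_k \to 0$ while $\pi_k^m$ stays bounded away from zero. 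That lemma then forces $\rho_k \geq \eta$, contradicting $k > K$. Hence $\pi_\star = 0$ and both limits follow.

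The only subtlety I anticipate is the bookkeeping around the case $K = -1$ (no successful step ever occurs): in that scenario the same argument applies verbatim starting from $k = 0$ with $x_\star = x_0$ and $\Delta_{K+1}$ replaced by $\Delta_0$. A second mild point worth flagging is that the contradiction step relies on $m_k$ being constructed even during criticality iterations; this is precisely what line~\ref{st:intrpl_pts} of Algorithm~\ref{alg.dfo_approx} guarantees, so Lemma~\ref{lem.delta_small_succ} applies unchanged to every iteration with $k > K$, which is what makes the geometric decay $\Delta_k \to 0$ incompatible with a positive limit of $\pi_k^f$.
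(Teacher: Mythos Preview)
Your proof is correct and follows essentially the same approach as the paper: establish $\Delta_k\to 0$ from the geometric contraction after the last successful step, then derive a contradiction via Lemma~\ref{lem.delta_small_succ} if the stationarity measure fails to vanish. The only cosmetic difference is that you exploit the frozen iterate to argue directly that $\pi_k^f$ is eventually constant, whereas the paper first shows $\pi_k^m\to 0$ and then invokes Lemma~\ref{lem.pi}; both routes rely on the same two lemmas and the same contradiction.
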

\begin{proof}
  Suppose that $k_s$ is the largest element of $\NN$ such that iteration $k_s$ yields a successful step.  Then, for all $k \in \NN$ with $k \geq k_s$, iteration $k$ yields a criticality or unsuccessful step.  This means that $\Delta_{k+1} = \gamma_{\dec} \Delta_k$ for all $k \in \NN$ with $k \geq k_s$, from which it follows that $\{\Delta_k\} \to 0$, as desired.  Now, if $\{\pi_k^m\} \not\to 0$, then it follows from Lemma~\ref{lem.delta_small_succ} that there exists $\hat{k} \in \NN$ with $\hat{k} > k_s$ such that iteration $\hat{k}$ yields a successful step.  Since this contradicts the definition of $k_s$, it follows that $\{\pi_k^m\} \to 0$.  Combining $\{\Delta_k\} \to 0$, $\{\pi_k^m\} \to 0$, and the fact that, for all $k \in \NN$, Lemma~\ref{lem.pi} implies $\pi_k^f \leq | \pi_k^f - \pi_k^m | + \pi_k^m \leq \kappa_{\eg} \Delta_k + \pi_k^m$, it follows that $\{\pi_k^f\} \to 0$, as desired.
\end{proof}

We now show that the trust region radius always vanishes.

\begin{lemma}\label{lem.Delta_0}
  The trust region radius vanishes, i.e., $\displaystyle \lim_{k \to \infty} \Delta_k = 0$.
\end{lemma}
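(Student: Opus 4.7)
The plan is to distinguish two cases based on the number of successful iterations. If $|\{k \in \NN : \rho_k \geq \eta\}|$ is finite, then the conclusion $\Delta_k \to 0$ follows directly from Lemma~\ref{lem.finite_succ}. Otherwise, there is an infinite set $\mathcal{S} \subseteq \NN$ of successful iteration indices, and I would proceed in two steps: first show that $\Delta_k \to 0$ along $\mathcal{S}$, then propagate this to all $k$ via the trust-region update rule.

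For the first step, the Cauchy-decrease condition \eqref{eq:cauchy_dec} combined with $\rho_k \geq \eta$ for each $k \in \mathcal{S}$ gives
\[
f(x_k) - f(x_{k+1}) \;\geq\; \eta \kappa_{\fcd} \pi_k^m \min\left\{\frac{\pi_k^m}{\kappa_{\bhm}+1},\, \Delta_k,\, 1\right\}.
\]
Assumption~\ref{assumption:bounded_f} bounds the telescoped sum $\sum_{k \in \mathcal{S}} (f(x_k) - f(x_{k+1}))$, forcing the right-hand side to tend to zero along $\mathcal{S}$. On any successful step the criticality test in line~\ref{st:check_criticality} failed, so Lemma~\ref{lem.criticality_not_called} yields $\pi_k^m \geq \min\{\epsilon_c, \mu^{-1} \Delta_k\}$. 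Suppose for contradiction that $\Delta_k \geq \bar\epsilon$ for some $\bar\epsilon > 0$ and infinitely many $k \in \mathcal{S}$. On every such iteration, $\pi_k^m$ is bounded below by $\min\{\epsilon_c, \mu^{-1}\bar\epsilon\}$, and consequently the entire product in the Cauchy-decrease expression is bounded below by a strictly positive constant depending only on $\bar\epsilon$ and the fixed algorithm parameters. This gives a uniform positive per-step decrease on infinitely many terms, contradicting the finiteness of the sum. Hence $\Delta_k \to 0$ along $\mathcal{S}$.

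For the second step, let $\{k_j\}_{j \in \NN}$ enumerate $\mathcal{S}$ in increasing order. By the updates in Algorithm~\ref{alg.dfo_approx}, the successful step gives $\Delta_{k_j+1} \leq \gamma_{\inc} \Delta_{k_j}$, while criticality and unsuccessful steps multiply $\Delta$ by $\gamma_{\dec} \in (0,1)$. Thus for every $k \in [k_j, k_{j+1}]$ one has $\Delta_k \leq \gamma_{\inc} \Delta_{k_j}$. Since $\Delta_{k_j} \to 0$ as $j \to \infty$, the tail of $\{\Delta_k\}$ is uniformly dominated by a sequence tending to zero, yielding $\Delta_k \to 0$ over the entire iteration sequence.

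The main obstacle is the contradiction argument in the second paragraph: one must verify carefully that the lower bound on the Cauchy-decrease expression depends only on $\bar\epsilon$ and on the fixed parameters $\epsilon_c,\mu,\kappa_{\bhm},\kappa_{\fcd},\eta$, so that the per-step reduction is uniformly positive across the hypothetically infinite subfamily of successful iterations with $\Delta_k \geq \bar\epsilon$. The remaining bookkeeping---propagation from $\mathcal{S}$ to all indices, and the vacuous finite-successful case---closely follows the classical trust-region analyses in \cite{conn2009introduction, conejo2013global}.
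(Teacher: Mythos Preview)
Your proposal is correct and follows essentially the same approach as the paper: the same finite/infinite case split on successful iterations, the same combination of \eqref{eq:cauchy_dec} with Lemma~\ref{lem.criticality_not_called} and Assumption~\ref{assumption:bounded_f} to force $\Delta_k\to 0$ along $\mathcal{S}$, and the same propagation to all $k$ via $\Delta_k \le \gamma_{\inc}\Delta_{s(k)}$. The only cosmetic difference is that the paper bounds the summed decrease directly and reads off $\{\Delta_k\}_{k\in\mathcal{S}}\to 0$, whereas you phrase the same step as an explicit contradiction on a subsequence with $\Delta_k\ge\bar\epsilon$.
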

\begin{proof}
  If number of successful steps is finite, then the desired conclusion follows from Lemma~\ref{lem.finite_succ}.  Hence, one may proceed under the assumption that there are infinite number of successful steps.  Let $\cal S$ denote the set of iterations yielding successful steps.  By \eqref{eq:cauchy_dec}, it follows for any $k \in \cal S$ that
  \begin{align*}
    f(x_k) - f(x_{k+1}) \geq \eta (m_k(x_k) - m_k(x_k + s_k)) \geq \eta \kappa_{\fcd} \pi_k^m \min \left\{\frac{\pi_k^m}{\kappa_{\bhm} + 1}, \Delta_k, 1 \right\}.
  \end{align*}
  Therefore, since the condition in line~\ref{st:check_criticality} does not hold for any $k \in \cal S$, it follows with Lemma~\ref{lem.criticality_not_called} that, for such $k$, one has $\pi_k^m \geq \min\{\epsilon_c, \mu^{-1}\Delta_k\}$ and
  \begin{align*}
    \sum_{k \in \cal S} f(x_k) - f(x_{k + 1}) \geq \sum_{k \in S} \eta \kappa_{\fcd} \min\{\epsilon_c,\mu^{-1}\Delta_k\} \min \left\{\frac{\min\{\epsilon_c, \mu^{-1}\Delta_k\}}{\kappa_{\bhm} + 1} , \Delta_k, 1\right\} \geq 0.
  \end{align*}
  Hence, since $|\cal S|$ is infinite and $f$ is bounded below under Assumption~\ref{assumption:bounded_f}, it follows that $\{\Delta_k\}_{k\in\cal S} \to 0$.  Now, for any $k \in \NN$, observe that $\Delta_k \leq \gamma_{\inc} \Delta_{s(k)}$, where the index $s(k)$ corresponds to the largest index in $\cal S$ such that $s(k) \leq k$ (i.e., $s(k)$ corresponds to the latest iteration yielding a successful step up to iteration $k$).  Then, it follows from $\{\Delta_k\}_{k\in\cal S} \to 0$ that, in fact, $\{\Delta_k\} \to 0$, as desired.
\end{proof}

The next lemma shows that a subsequence of stationary measures vanishes.
\begin{lemma}\label{lem.liminf_f}
  The limit inferior of $\{\pi_k^f\}$ is zero, i.e., $\displaystyle \liminf_{k \to \infty} \pi_k^f = 0$.
\end{lemma}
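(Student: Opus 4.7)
The plan is to argue by contradiction. Suppose that $\liminf_{k \to \infty} \pi_k^f > 0$; then there exist $\epsilon \in \RR_{>0}$ and $K \in \NN$ such that $\pi_k^f \geq \epsilon$ for all $k \geq K$. The goal is to derive a uniform lower bound on the trust-region radius $\Delta_k$ for all $k \geq K$, which will contradict Lemma~\ref{lem.Delta_0}.

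To obtain this lower bound, I would replay the argument of Lemma~\ref{lem.Delta_min} starting from iteration $K$ instead of iteration $0$. Specifically, define
\begin{equation*}
\Delta_{\min}' := \min \left\{ \Delta_K,\ \frac{\gamma_{\dec} \epsilon}{\kappa_{\eg} + \mu^{-1}},\ \gamma_{\dec} \left(\kappa_{\eg} + \frac{2 \kappa_{\ef}}{\kappa_{\fcd}(1-\eta)}\right)^{-1} \epsilon,\ \gamma_{\dec} \mu \epsilon_{\mc},\ \frac{\gamma_{\dec} \epsilon_{\mc}}{\kappa_{\bhm}+1},\ \gamma_{\dec} \right\},
\end{equation*}
where $\epsilon_{\mc}$ is the constant from Lemma~\ref{lem.criticality_not_called} corresponding to $\epsilon$. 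If some $\hat k \geq K$ satisfies $\Delta_{\hat k} < \Delta_{\min}'$, then by choosing the smallest such index, it must be that $\hat k > K$ (since $\Delta_K \geq \Delta_{\min}'$) and $\Delta_{\hat k} < \Delta_{\hat k - 1}$, so iteration $\hat k - 1$ yields either a criticality step or an unsuccessful step. In either case, the case analysis in the proof of Lemma~\ref{lem.Delta_min} applies verbatim, since that argument only uses $\pi_{\hat k - 1}^f \geq \epsilon$ (which holds because $\hat k - 1 \geq K$) together with the update rule $\Delta_{\hat k} = \gamma_{\dec} \Delta_{\hat k - 1}$. This produces a contradiction to $\Delta_{\hat k} < \Delta_{\min}'$, establishing that $\Delta_k \geq \Delta_{\min}' > 0$ for all $k \geq K$.

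The conclusion then contradicts Lemma~\ref{lem.Delta_0}, which guarantees $\lim_{k \to \infty} \Delta_k = 0$. Therefore the supposition fails and $\liminf_{k \to \infty} \pi_k^f = 0$. The one subtlety worth noting, and which I would state explicitly when writing the proof, is that Lemma~\ref{lem.Delta_min} as stated assumes $\pi_k^f \geq \epsilon$ for every $k \in \NN$; the localized version needed here only requires the bound for $k \geq K$, but its proof is identical because the argument is a one-step-back comparison that never needs to go earlier than iteration $K$ thanks to the $\Delta_K$ term in $\Delta_{\min}'$. No new estimates are required beyond those already derived in Lemmas~\ref{lem.pi}--\ref{lem.Delta_0}, so this should be a short proof.
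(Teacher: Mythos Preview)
Your argument is correct and takes a slightly different route than the paper. Both proofs begin by supposing $\pi_k^f \geq \epsilon$ for all sufficiently large $k$ and then invoke (a localized version of) Lemma~\ref{lem.Delta_min} to obtain a uniform positive lower bound on $\Delta_k$. The paper then feeds this bound into the Cauchy decrease condition~\eqref{eq:cauchy_dec} to obtain a fixed positive objective reduction at every sufficiently late successful step, sums over the (assumed infinite) set of successful iterations, and contradicts Assumption~\ref{assumption:bounded_f}; it also handles the finite-success case separately via Lemma~\ref{lem.finite_succ}. You instead contradict Lemma~\ref{lem.Delta_0} directly, which is shorter: the summation-plus-boundedness argument is already packaged inside the proof of Lemma~\ref{lem.Delta_0}, so there is no need to repeat it or to split off the finite-success case. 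Your explicit remark that Lemma~\ref{lem.Delta_min} must be localized to $k \geq K$ (replacing $\Delta_0$ by $\Delta_K$ in the definition of $\Delta_{\min}$) is a point the paper glosses over when it cites Lemma~\ref{lem.Delta_min} under the weaker hypothesis $\pi_k^f \geq \epsilon$ for $k \geq k_0$; your treatment is more careful here.
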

\begin{proof}
  If the number of successful steps is finite, then the desired conclusion follows from Lemma~\ref{lem.finite_succ}.  Hence, one may proceed under the assumption that there are infinite number of successful steps.  To derive a contradiction, suppose that there exists $k_0 \in \NN$ and $\epsilon \in \RR_{>0}$ such that $\pi_k^f \geq \epsilon$ for all $k \geq k_0$.  Let $\cal S$ denote the set of iterations yielding successful steps and consider arbitrary $k \in \cal S$ with $k \geq k_0$.  By \eqref{eq:cauchy_dec}, the fact that $\pi_k^f \geq \epsilon$, and Lemmas~\ref{lem.criticality_not_called} and \ref{lem.Delta_min}, one finds that
  \begin{align*}
    f(x_k) - f(x_{k + 1}) \geq \eta (m_k(x_k) - m_k(x_k + s_k)) \geq \eta \kappa_{\fcd} \epsilon_{\mc} \min \left\{\frac{\epsilon_{\mc}}{\kappa_{\bhm} + 1}, \Delta_{\min}, 1\right\}.
  \end{align*}
  Summing over all $k \in \cal S$ with $k \geq k_0$ and using the fact that $f$ is bounded below under Assumption~\ref{assumption:bounded_f}, one reaches a contradiction.  Hence, one deduces that $\{\pi_k^f\} \to 0$.
\end{proof}

Finally, we are prepared to prove our main result, Theorem~\ref{th.main}.

\begin{proof}[Theorem~\ref{th.main}]
  Let $\cal S$ denote the set of iterations yielding successful steps.  If $|{\cal S}|$ is finite, then the desired conclusion follows by Lemma~\ref{lem.finite_succ}.  Hence, one may proceed under the assumption that $|{\cal S}|$ is infinite.  Since $x_{k+1} = x_k$, and hence $\pi_{k+1}^f = \pi_k^f$, for all $k \in \NN$ such that iteration $k$ does not yield a successful step, one may proceed by considering exclusively the indices in $\cal S$.  To arrive at a contradiction, suppose there exists infinite $\{t_i\} \subseteq \cal S$ and $\epsilon \in \RR_{>0}$ such that, for all $i \in \NN$, one has $\pi_{t_i}^f \geq 2 \epsilon > 0$.  By Lemma~\ref{lem.liminf_f}, it follows for all $i \in \NN$ that there exists some smallest $l_i \in \cal S$ with $l_i > t_i$ such that $\pi_{l_i}^f < \epsilon$.  Therefore, the subsequences $\{t_i\}$ and $\{l_i\}$ of $\cal S$ satisfy, for all $i \in \NN$,
  \begin{align}\label{eq:assum_main_thm}
    \pi_{t_i}^f \geq 2 \epsilon, \ \ \pi_k^f \geq \epsilon \ \text{for all $k \in \cal S$ such that $t_i < k < l_i$}, \ \ \text{and} \ \ \pi_{l_i}^f < \epsilon.
  \end{align}
  By Lemma~\ref{lem.criticality_not_called}, one has $\pi_k^m \geq \epsilon_{\mc} > 0$ for all $k \in {\cal{K}} := \bigcup_{i=0}^{\infty} \{k \in {\cal{S}}: t_i \leq k < l_i \}$.  Hence, by \eqref{eq:cauchy_dec}, it follows for all $k \in \cal K$ that
  \begin{align*}
    f(x_k) - f(x_{k+1}) &\geq \eta (m_k(x_k) - m_k(x_k + s_k)) \\
    &\geq \eta \kappa_{\fcd} \epsilon_{\mc} \min \left\{\frac{\epsilon_{\mc}}{\kappa_{\bhm} + 1}, \Delta_k, 1\right\} \geq 0.
  \end{align*}
  Since Lemma~\ref{lem.Delta_0} shows that $\{\Delta_k\} \to 0$, it follows for all sufficiently large $k \in \cal{K}$ that
  \begin{align*}
    \Delta_k \leq \frac{1}{\eta \kappa_{\fcd} \epsilon_{\mc}} (f(x_k) - f(x_{k+1})).
  \end{align*}
  Therefore, for all sufficiently large $i \in \NN$ one concludes that  
  \begin{align*}
    \| x_{t_i} - x_{l_i} \|_{\tr} \leq \sum_{\substack{j = t_i \\ j \in \mathcal{K}}}^{l_i - 1} \| x_j - x_{j+1}\|_{\tr} \leq \sum_{\substack{j = t_i \\ j \in \mathcal{K}}}^{l_i - 1} \Delta_j \leq \frac{1}{\eta \kappa_{\fcd} \epsilon_{\mc}} (f(x_{t_i}) - f(x_{l_i})).
  \end{align*}
  Under Assumption~\ref{assumption:bounded_f} and by construction of the algorithm, it follows that $\{f(x_k)\}$ is bounded below and monotonically nonincreasing; hence, along with the bound above, it follows that $\{\| x_{t_i} - x_{l_i} \|_{\tr}\}_{i=0}^\infty \to 0$.  Thus, by Lipschitz continuity of $\nabla f$  (with constant $L_{\nabla f}$ defined by \eqref{eq.needthis}), it follows that
  \begin{align*}
    | \pi_{t_i}^f - \pi_{l_i}^f | \leq \kappa_{\eg} \|\nabla f(x_{t_i}) - \nabla f(x_{l_i}) \|_{\trc}  \leq \kappa_{\eg} L_{\nabla f} \| x_{t_i} - x_{l_i} \|_{\tr}.
  \end{align*}
  Therefore, $\{| \pi_{t_i}^f - \pi_{l_i}^f |\}_{i=0}^\infty \to 0$. However, this is in contradiction with $| \pi_{t_i}^f - \pi_{l_i}^f | \geq \epsilon > 0$ for all $i \in \NN$ (see \eqref{eq:assum_main_thm}), and hence the desired result follows.
\end{proof}

\end{document}